\documentclass[]{amsart}
\usepackage[left=2.7cm,right=2.7cm,top=3.5cm,bottom=3cm]{geometry}
\usepackage[english]{babel}
\usepackage{pifont}
\usepackage[latin1]{inputenc}
\usepackage[notref,notcite]{%showkeys
}
\usepackage{amsmath}
\usepackage{amsfonts}
\usepackage{amsthm}
\usepackage{amscd}
\usepackage{amssymb}
\usepackage[all]{xy}
\usepackage{graphicx}
\usepackage[usenames,dvipsnames]{color}
\usepackage{mathrsfs}
\usepackage{caption}
\captionsetup[figure]{font=small}
\usepackage{tikz-cd} 
\usepackage{braket}
\usepackage{mathrsfs}
\usepackage[colorlinks = true,
linkcolor = black,
urlcolor = black,
bookmarksopen = true, 
backref = page]{hyperref}

\theoremstyle{plain}
\newtheorem{thm}{Theorem}[section]
\newtheorem{cor}[thm]{Corollary}
\newtheorem{lem}[thm]{Lemma}
\newtheorem{prop}[thm]{Proposition}

\theoremstyle{definition}

\newtheorem{rmk}[thm]{Remark}

\numberwithin{equation}{section}

\renewcommand{\ge}{\geqslant}
\renewcommand{\le}{\leslant}

\newcommand{\field}[1]{\mathbb{#1}}
\newcommand{\Q}{\field{Q}}
\newcommand{\C}{\field{C}}

\newcommand{\R}{\field{R}}

\newcommand{\N}{\field{N}}
\newcommand{\Z}{\field{Z}}
\newcommand{\A}{\field{A}}

\newcommand{\p}{\field{P}}
\newcommand{\G}{\field{G}}

\newcommand{\X}{\field{X}}

\newcommand{\nr}{nr}

\newcommand{\card}{\ding{171}}

\DeclareMathOperator{\PGL}{PGL}
\DeclareMathOperator{\End}{End}
\DeclareMathOperator{\Aut}{Aut}

\DeclareMathOperator{\Lie}{Lie}

\DeclareMathOperator{\GL}{GL}
\DeclareMathOperator{\SL}{SL}
\DeclareMathOperator{\Hom}{Hom}

\DeclareMathOperator{\Spec}{Spec}

\DeclareMathOperator{\rk}{rk}

\DeclareMathOperator{\vol}{vol}

\DeclareMathOperator{\Ram}{Ram}
\DeclareMathOperator{\Gr}{Gr}

\DeclareMathOperator{\CM}{\scal^{\text{CM}}}
\DeclareMathOperator{\SCM}{\scal^{\text{SCM}}}
\DeclareMathOperator{\rec}{rec}
\DeclareMathOperator{\Rat}{Rat}
\DeclareMathOperator{\Stab}{Stab}

\DeclareMathOperator{\Emb}{Emb}
\DeclareMathOperator{\Opt}{Opt}
\DeclareMathOperator{\Cl}{Cl}

\DeclareMathOperator{\Pic}{Pic^\circ}

\newcommand{\cala}{\mathscr A}

\newcommand{\calc}{\mathcal C}
\newcommand{\ccal}{\mathscr C}

\newcommand{\calf}{\mathcal F}
\newcommand{\calg}{\mathcal G}
\newcommand{\calh}{\mathcal H}

\newcommand{\calm}{\mathcal M}

\newcommand{\ncal}{\mathscr N}
\newcommand{\calo}{\mathscr O}

\newcommand{\cals}{\mathscr S}
\newcommand{\scal}{\mathcal S}
\newcommand{\calt}{\mathscr T}
\newcommand{\calv}{\mathscr V}

\newcommand{\calx}{\mathscr X}
\newcommand{\calz}{\mathscr Z}

\newcommand{\gotb}{\mathfrak b}

\providecommand{\customgenericname}{}
\newcommand{\newcustomtheorem}[2]{%
	\newenvironment{#1}[1]
	{%
		\renewcommand\customgenericname{#2}%
		\renewcommand\theinnercustomgeneric{##1}%
		\innercustomgeneric
	}
	{\endinnercustomgeneric}
}

\newcustomtheorem{customthm}{Theorem}

\renewcommand{\ge}{\geqslant}
\renewcommand{\le}{\leqslant}

\newcommand{\hooklongrightarrow}{\lhook\joinrel\longrightarrow}

\newcommand{\interior}[1]{%
	{\kern0pt#1}^{\mathrm{o}}%
}

%opening
\title{Equidistribution of CM points on a Shimura Curve modulo a ramified prime}
\author{Francesco Maria Saettone}
\address{Department of Mathematics, Ben-Gurion University of the Negev, Israel}
\email{saettone@post.bgu.ac.il}

\begin{document}

\begin{abstract}
We prove an equidistribution statement for the reduction of Galois orbits of CM points on the special fiber of a Shimura curve over a totally real field attached to some ramified primes. To do so we study the reduction of CM points in the special fiber and we use Ratner's theorem to obtain the desired equidistribution.
\end{abstract}

\maketitle

\tableofcontents

\section{Introduction}

In \cite{cv} Cornut and Vatsal proved a simultaneous equidistribution result for reductions of Galois orbits of CM points in a quaternionic Shimura curve over a totally real field $F$, under the condition that the quaternion algebra splits at every prime at which the reduction takes place. In this work we remove this condition and suppose instead that the quaternion algebra ramifies at the primes of reduction.
\\

Equidistribution results for CM points on Shimura curves have been an active area of research in the last two decades. In the archimedean setting, among others  Clozel--Ullmo \cite{cu} and Zhang \cite{sz3} showed the convergence of  delta measures of certain orbits of CM points to the hyperbolic probability measure in the $\C$-points of the Shimura curve, with noticeable applications to the Andr\'e-Oort conjecture.
Cornut and Vatsal in a series of papers \cite{cor}, \cite{vat} culminating with \cite{cv} and \cite{cv2}, proved equidistribution modulo a non-split prime in a CM extension of $F$ and applies them to establish Mazur's conjecture on the non-triviality of Heegner points. A variant of their equidistribution result for PEL Shimura varieties is proved in \cite{xz}. In particular, Cornut and Vatsal proved that Galois orbits of the reduction of CM points are uniformly distributed with respect to the counting probability measure on the finite set of supersingular points. The same result (over $F=\Q$) was obtained by different methods  in \cite{jk}, which we plan to generalize to totally real fields in the forthcoming paper \cite{fms}.
\\On the non-archimedean side, Herrero--Menares--Rivera-Letelier \cite{hmrl} \cite{hmrl2} described the
set of accumulation measures of CM points
on the Berkovich $j$-line over the $p$-adics, and by these equidistribution methods they show that for every finite set of prime numbers $S$, there are at most finitely many singular moduli that are $S$-units. Lastly, Disegni \cite{dis} studied the equidistribution problem for CM points in the Berkovich analytification of a Shimura curve over the $p$-adics. For other related works see the references in  \cite{dis}.
\\

Let $B$ be a quaternion algebra over $F$ satisfying one of the following conditions:
\begin{itemize}
	\item  there is a unique real place $v$ of $F$ such that $B_v=B\otimes F_v$ is split\footnote{I.e., $B_v\simeq M_2(F_v)$.}, i.e., $B$ is $\mathit{indefinite}$;
	
	\item for every real place $B_v$ is non-split, i.e., $B$ is $\mathit{definite}$.
\end{itemize}
Fix $v$, a finite place of $F$ such that $B_v=B\otimes F_v$ is ramified. Let $K$ be a CM extension of $F$, so that to $B$ and $K$ we associate CM points. In this introduction we focus on the indefinite case, so that CM points inhabit the Shimura curves associated to $B$, while in the second case the definite Shimura set is a finite set. This situation, a priori rather different, is dealt with in the Appendix, since all the main proofs follow the same line of the (to our opinion, more interesting) indefinite case. We point out that in the setting of Shimura curves CM points are sometimes called Heegner points, especially when dealing with modular curves. We follow the terminology of CM point to differentiate between the points on the Shimura curve and the corresponding points on an elliptic curve, which we do not discuss.
\\

By the moduli interpretation of the local integral model at $v$ of the Shimura curve with maximal level structure, every point $x$ in the special fiber parametrizes a formal $\calo_{B_v}$-module, so that we denote by $\End(x)$ its endomorphism ring.

Let $\mu_{\text{ram}}$ be the normalized counting measure on the singular locus $\{s_1,...,s_h\}$ of special fiber of the Shimura curve given by
\[
\mu_{\text{ram}}(s_i)=\dfrac{w(s_i)}{\sum_{j=1} ^{h}w(s_j)}
\]
where $w(s_i)=\#\End(s_i)^\times/2$ is the $\mathit{weight}$ of $s_i$. 

On the other hand, let $\mu_{\text{in}}$ be the normalized counting measure on  the set of irreducible component $\{c_1,...,c_k\}$, given by
\[
\mu_{\text{in}}(c_i)=\dfrac{w(c_i)}{\sum_{j=1}^h w(c_j)}
\]
where  $w(c_i)=\#\End(c_i)^\times/2$ is the $\mathit{weight}$ of $c_i$. Here by $\End(c_i)$ we mean the endomorphisms of the formal group attached to the reduction of a CM point, which lands on the connected component $c_i$.

The following theorem restates the main result of this paper as in Theorem \ref{mainthm} .

\begin{customthm}{A}
	Let $K$ be fixed. The reductions of the Galois orbits of CM points are equidistributed for the conductors going $v$-adically to infinity\footnote{I.e., the $v$-adic valuation of the conductor varies.}:
	 \begin{itemize}
	 	 \item  in the singular locus of the special fiber of the Shimura curve with respect to $\mu_{\text{ram}}$ for $v$ ramified in $K$;
	 	
	 	\item  in the set of irreducible components of the special fiber of the Shimura curve with respect to $\mu_{\text{in}}$, for $v$ inert in $K$.
	 \end{itemize}
\end{customthm}

\begin{figure}
	\centering
	\includegraphics[width=0.7\linewidth]{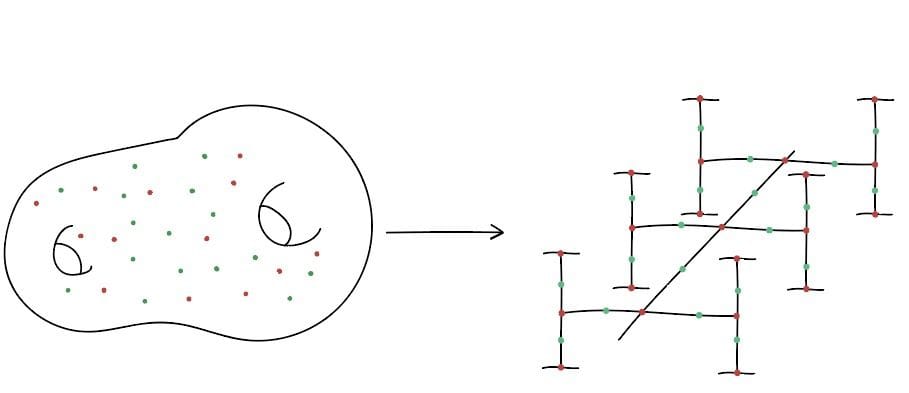}
	\caption{An illustration (drawn by Francesco Beccuti) of our main results, for a Shimura curve of genus $2$ over the $2$-adic numbers: the reduction map send the CM point to a red point for $v$ is ramified, and to a green one for $v$ inert.}
	\label{fig:reductiondrawingarxiv}
\end{figure}

The ramification of the quaternion algebra implies the existence of a $v$-adic uniformisation of Shimura curves by the Drinfeld upper half plane, and not by Lubin-Tate space, in sharp contrast with the unramified case. This is indeed one major difference with Cornut-Vatsal \cite{cv}, since there is a substantial modification of the moduli interpretation of these spaces.

This difference thus reverberates also in the in the study of the reduction of CM point in the special fiber. To do so, we exploit several interpretations of Drinfeld space (see \cite{bc}), and via Dieudonn\'e modules we relate them to (local) Ribet bimodules. We thus establish that the reduction at a ramified prime of CM point reduces to a singular (superspecial in the moduli interpretation) point in the special fiber, while at an inert prime it reduces to a smooth point of the special fiber. This result is perhaps well-known to experts, but we could not find a complete proof in the literature.
The reduction of CM points was studied, for $F=\Q$ in Ribet's beautiful paper \cite{rib}, and by the same methods an equidistribution statement is claimed in \cite{mol}. We point out that that equidistribution claim requires the discriminant of $K$ to vary, while, as we follow \cite{cv}, we make vary the conductor $v$-adically in a fixed CM field. 
 Moreover, another difference is that neither \cite{jk} nor \cite{mol} state any simultaneous equidistribution. 
 \\
 
The proof of our equidistribution theorem closely follows the strategy of \cite{cv}, whose main ingredient is the celebrated Ratner's theorem. For this result, we describe the sets of (special) CM points, their reductions, the connected components of the local model of the Shimura curve and the irreducible components of its special fibers as adelic double quotients as in \cite{sz} and \cite{sz2}. To do so, we  make an essential use of Cerednik-Drinfeld uniformization. In addition, Cornut--Vatsal focused only on the equidistribution on the set of supersingular point, i.e., they did not study the reduction by a non-split prime.

Moreover, we mostly focus on the case of  $v$ ramified in $K$. In fact, although geometrically the ramification and the inertia of $v$ produce two significantly different scenarios, the technical steps towards the application of Ratner's theorem proceed in parallel, leaving the impression that it is more natural a single description of this procedure. For the very same reason, we relegate a discussion of the case of a $\mathit{definite}$ quaternion algebra to the Appendix. 
\\

In conclusion, we mention that as an application of this equidistribution result one can obtain the proof of Mazur's conjecture in \cite{cv2} (sharpened in \cite{yzz}) in the ramified case.

\subsection{Notation and Conventions}
We list some notations used throughout this work:
\begin{itemize}
	\item we denote by $\C_v$  the completion of an algebraic closure of $F_v$, for $F$ a totally real number field
	\item we denote by $\mathbb{H}$ the classical Hamilton's quaternions
	\item $\A$ denotes the ring of adeles over $\Q$, and $\A_f$ the finite adeles
	\item for a quaternion algebra $B$ over $F$, we denote by $\Ram(B)$, the set of places of $F$ where $B$ ramifies; the same symbol, with $f$ or $\infty$ subscript, denotes the restriction to finite or archimedean places respectively
	\item for a set $S$, we denote by $\mathbf{1}_S$ the characteristic function of $S$
	\item for a local field $F$, we denote by $\breve{F}$ the completion of the maximal unramified extension
	\item for a field $K$, let $G^{ab}_K$ denote the abelianization of the absolute Galois group of $K$
	\item for a set $X$, we denote by $\calc(X,\C)$ the set of continuous functions on it, and in this context the superscript $*$ is inteded to be the $\C$-linear dual.
	
\end{itemize}

\subsubsection*{Acknowledgments}

It is a pleasure to thank Professor Daniel Disegni for his guidance with my doctoral thesis, which this article is part of. I am also grateful to Zev Rosengarten for illuminating discussions culminating with the communication of \cite{ros}. This work was supported by ISF grant 1963/20 and BSF grant 2018250.

\section{Shimura Curves}

\subsection{Quaternion Algebras}
Let $F$ be a totally real number field of degree $d$ over $\Q$. Let $B$ be a quaternion algebra $\mathit{indefinite}$ exactly at one archimedean place of $F$, denoted by $\tau_1\colon F\hookrightarrow \R$. 
\\Let $\mathbb{S}$ be the Deligne torus, namely the real algebraic group of invertible elements in the $\R$-algebra $\C$, i.e., $\mathbb{S}=\text{Res}_{\C/\R}\G_m$. Hence $\mathbb{S}(\R)\simeq\C^\times$ and $\mathbb{S}_\C\simeq \G_{m,\C}\times\G_{m,\C}$.
\\Consider the reductive group $G$ over $\Q$ whose functor of points, for a commutative $\Q$-algebra $A$, is given by 
\[
G(A)=(B\otimes_\Q A)^\times.
\]
Then $G=\text{Res}_{F/\Q}B^\times$ and $h_0\colon\mathbb{S}\rightarrow G_\R$ is a real embedding with trivial coordinates at $\tau_i$ for $i\ge 2$.  
Indeed, $G_\R(\R)=(B\otimes_\Q\R)^\times$, where $B\otimes\R=\prod_{\tau}B_\tau=B_{\tau_1}\times\prod_{i=2}^{d}B_{\tau_i}\simeq M_2(\R)\times \mathbb{H}^{d-1}$.

Consider the conjugacy class of $h_0$ under the action of $G(\R)$, which is isomorphic to the Poincar\'e ``half'' plane $\mathscr{H}=\C-\R$. For any compact open subgroup $U$ of $G(\A_f)$, the Shimura curve $\scal_U$ over $F$ is the canonical model of
\begin{equation}\label{shi}
\scal^{an}_U:=G(\Q)\backslash \mathscr{H}\times G(\A_f)/ U \cup \{\text{cusps}\}
\end{equation}
where the set $\{\text{cusps}\}$ is non-empty if and only if $B=M_2(\Q)$. We remark that the cusps play no role in this work.

Regarding the moduli interpretation of these curves, we briefly recall the main facts and we refer to \cite{sz} and \cite{sz2} for more details.

If $F=\Q$, then $\scal_U$ is a moduli space of QM abelian surfaces\footnote{I.e., with quaternionic multiplication.}, and the extension of the moduli problem to $\Z$ allows to define canonical integral models. On the other hand, when $d>1$, there is no convenient parametrization of abelian varieties. Nonetheless, $\scal_U$ has a finite map to another Shimura curve $\scal_{U'}$ carrying the desired parametrization. More specifically, the curve $\scal_{U'}$ can be viewed as a certain moduli space of QM abelian varieties with a polarization and a rigidification \cite[Prop.1.1.5]{sz2}.

Following \cite{bc}, in the next sections we will describe an integral regular model $\cals_U$ over $\Spec\calo_F$, in the case of $U_v$ maximal at $v$, for $v\in \Ram(B)$.

\subsubsection{Ramification Setting}\label{rm}

From now on, let $K$ be a CM quadratic extension of $F$, and let $B$ denote a quaternion algebra over $F$ split by $K$, unless stated differently.
\\We fix an embedding $K\hookrightarrow B$ over $F$. Let $S$ be a finite set of finite places of $F$ such that, for all $v\in S$,

\begin{itemize}
	\item $B$ ramifies at $v$;
	\item $ \# \Ram_f(B) - \# S + d$ is even;
	\item $v$ is inert or ramified in $K$.
\end{itemize}

The first two conditions implies (by \cite[Thm.3.1, p.74]{vign} or \cite[Prop.14.2.7]{voi}) that there exists a  totally\footnote{That is, all archimedean places are ramified in $B_S$.} definite quaternion algebra $B_S$ over $F$ such that $\Ram_f(B_S)=\Ram_f(B)-S$.
\\Note that the last condition  is implied by the first one, since our hypotheses do not allow the split case, i.e., if $B_v$ ramified then $K_v$ is not  split for $F_v$. Indeed $B_v$, while it contains every quadratic extension of $F_v$, it cannot contain $F_v\oplus F_v$, since otherwise the action on $F_v^2$ given by multiplication would identify $B_v$ with $2$ by $2$ matrices, leading to a contradiction with the hypothesis that $B$ is ramified.

\subsection{Local Integral Models}

\subsubsection{The Formal Scheme $\widehat{\Omega}$}
Let $F_v$ be a finite extension of $\Q_p$ with residue field $k$. We begin by recalling the rigid-analytic Drinfeld plane $\Omega$ over $F_v$ whose $\C_v$-points are given by
\[
\p^1(\C_v)-\p^1(F_v).
\]
This is the generic fiber of the formal scheme  whose construction we summarize below.
\\

Let $V$ be a vector space of dimension $2$ over $F_v$. The $\mathit{Bruhat}$-$\mathit{Tits}$ tree $\calt$
for $\PGL_2(V)$ is defined having vertices the homothety classes of $\calo_{F_v}$-lattices (i.e., free $\calo_{F_v}$-modules of $\rk$-$2$) on $V$. Two such classes $[\Lambda]$, $[\Lambda']$ are connected
by an edge if there exists $\Lambda''\in [\Lambda']$ such that $\Lambda''\subseteq \Lambda$ and $\Lambda/\Lambda''$ has length $1$ as $\calo_{F_v}$-module.

Let $s=[\Lambda]$ denote the homothety class of a $\calo_{F_v}$-lattice $\Lambda$  in a $2$-dimensional extension  of $F_v$. We denote by $\p_s$ the projective line $\p^1(\Lambda)=\text{Proj}(\text{Sym}(\Lambda))$ over $\calo_{F_v}$, where $\text{Sym}(\Lambda)$ is the usual symmetric algebra. Note that all the $\p_s$'s are birationally isomorphic.

Consider the open subset of $\p_s$
\[
\Omega_v:=\p_s-\p_{s,k}(k),
\]
and its completion $\widehat{\Omega}_s$ along the special fiber.

\begin{rmk}
	In other words, the formal model $\widehat{\Omega}_s$ arises from successive blow-ups of rational points on the special fibers of $\p_v$.
\end{rmk}

It is easy to see that, if $s$ and $s'$ are two adjacent vertices, then $s'$ defines a $k$-rational point of $\p_{s,k}$. Indeed we have $\Lambda\rightarrow \Lambda/\Lambda'\simeq k$. We glue $\p_s$ and $\p_{s'}$ via this $k$-rational point and we denote the resulting $\calo_{F_v}$-scheme by $\p_{[s,s']}$.
Thus, for an edge $[s,s']$, we define
\[
\Omega_{[s,s']}:=\p_{[s,s']}-(\p_{s',k}(k)\cup \p_{s,k}(k)) 
\]

and its completion $\widehat{\Omega}_{[s,s']}$ along the special fiber. 
The generalization of this procedure to finite subtrees $T\subset \calt$ leads to the regular formal scheme
\[
\widehat{\Omega}=\bigcup_T\widehat{\Omega}_T
\]
defined over $\calo_{F_v}$. It has semi-stable reduction.

\subsubsection{Moduli Interpretations of the Drinfeld Plane}\label{moduli}

The Drinfeld half plane admits a moduli interpretation, which now we explain. 
\\

Let $A$ be a ring. Consider the functor $\calc$ from the category $C$ of $A$-algebras complete with respect to an ideal of $A$ to the category of sets sending $R$ to the set of $n$-tuples of topologically nilpotent elements of $R$. An $n$-dimensional $\mathit{formal}$ ($\mathit{Lie}$) variety over $A$ is a functor $\calv$ from $C$ to sets which is isomorphic to $\calc$. Its tangent space
$\Lie(\calv)$ is the functor $\calv$ evaluated on the space of dual numbers. %this is a free $R$-module of $\rk$-$n$.
From now on, the we will deal with $n=1$ only. 
\\We thus define a $\mathit{formal}$ group over $A$ to be a group object in the category of formal varieties over $A$.  Let $\calo$ be a commutative ring and let $A$ be an $\calo$-algebra. A formal $\calo$-module over $A$ is a  formal group $X$ together with a ring homomorphism $\calo\rightarrow \End(X)$ whose derivative $\calo\rightarrow\End(\Lie(X))=A$ is the structure map $\calo \rightarrow A$. 

We recall a morphism $f\colon X\rightarrow X'$ between two formal groups over $A$ is an $\mathit{isogeny}$ if it is surjective and its kernel is a finite locally free
group scheme. For $A=\Z_p$, the kernel of $f$ is of rank a power of $p$ and such a power is called $\mathit{height}$ of the isogeny and denoted by $\text{ht}(f)$.
\\For two formal $\calo_{B_v}$-modules $X$ and $X'$, a morphism $\varphi\in\Hom_{\calo_{B_v}}(X,X')$ is a $\mathit{quasi}$-$\mathit{isogeny}$  if there exists a $m\ge 0$ such that $\varpi^m\varphi$ is a $\calo_{B_v}$-isogeny\footnote{I.e., an isogeny which is linear with respect to the $\calo_{B_v}$-action.}, for $\varpi$ a prime of $\calo_{K_v}$.
The height of a quasi-isogeny can now be defined as $\text{ht}(\varpi^m\varphi)-\text{ht}(\varpi^m)$.

Let $L_v$ be the unique quadratic unramified extension of $F_v$, and fix an embeding $F_v\hookrightarrow B_v$. Let $\varpi_{F_v}$ be a uniformizer of $F_v$. Pick an element $\Pi$ of $\calo_{B_v}$ such that $\Pi^2=\varpi_{F_v}$ and $\Pi a=a^\sigma \Pi$, for $\sigma$ the conjugation of $L_v$ over $F_v$ and $a\in\calo_{L_v}$.
Thanks to the grading induced by $\Pi$, we can write the homogeneous components of the tangent space of a formal module $X$ as follows
\begin{align*}
	&\Lie(X)_0=\{x\in\Lie(X) : a.x=ax, \;\forall a\in\calo_{L_v}\}
	\\
	&\Lie(X)_1=\{x\in\Lie(X) : a.x=a^{\sigma}x,\;\forall a\in\calo_{L_v}\}.
\end{align*}

We say that a formal module is $\mathit{special}$ if both the homogeneous components of the tangent space have dimension $1$.

Lastly, we denote by $\X$  the framing object, i.e., the unique, up to isogeny, formal module of height $4$ over $\overline{k}$.

Let $\text{Nilp}(\breve{\calo}_{F_v})$ be the category of $\breve{\calo}_{F_v}$-algebras in which the image of the uniformizer $\varpi_{F_v}$ of $\calo_{F_v}$ is nilpotent. Consider the functor $\calg$ on $\text{Nilp}$ assigning to $R$ the set of isomorphism classes of pairs $(X,\varrho)$ consisting of

\begin{itemize}
	\item a formal special $\calo_{B_v}$-module $X$ of height $4$ over $R$
	\item a height zero quasi-isogeny $\varrho\colon \X_{R/\varpi_{F_v}}\rightarrow X_{R/\varpi_{F_v}}$.
\end{itemize}

 Drinfeld proved that this functor is represented by the formal $\breve{\calo}_{F_v}$-scheme
\[
\widehat{\Omega}\widehat{\otimes}_{\calo_{F_v}}\breve{\calo}_{F_v}.
\]

 For a detailed discussion of the proof we refer to \cite[p.107]{bc} and to \cite{bz}.
 \\
 
 We now introduce two other equivalent moduli interpretations in terms of (semi-)linear objects.
 
 For a $\calo_{F_v}$-algebra $A$, consider the free rank-$2$ $A$-algebra $A[\Pi]=A[X]/(X^2-\varpi_{F_v})$. It has a  $\Z/2\Z$-grading such that $\deg(a)=0$ for $a\in A$ and $\deg(\Pi)=1$.

 Let $\calf$ be a functor sending an object  $A$ of $\text{Nilp}(\calo_{F_v})$ to the isomorphism class of
 	\[
 	(\eta, T, u, r)
 	\]
  where:
 	
 		\begin{itemize}
 		\item  $\eta$ is a $\Z/2\Z$-graded sheaf of flat $\calo_{F_v}[\Pi]$-modules constructible over $S=\Spec A$ of locally finite presentation, and  the homogeneous components $\eta_0$, $\eta_1$ are sheaves of flat $\calo_{F_v}$-modules whose fibers are free of rank-$2$;
 		
 		\item  $T$ is a $\Z/2\Z$-graded sheaf of $\calo_S$-modules whose the homogeneous components $T_0$, $T_1$ are line bundles on $S$;
 		
 		\item $u\in\Hom_{\calo_S[\Pi]}(\eta,T)$ is a map of degree $0$ such that $u\otimes\calo_S\colon\eta\otimes\calo_S\twoheadrightarrow T$;
 		\item $r$ is a rigidification $r\colon \underline{F_v}^2\simeq \eta_0\otimes_\calo\underline{F_v}$ where $\underline{F_v}$ is the constant scheme.
 	\end{itemize}

In the (in)famous paper \cite{drin}, Drinfeld proved that the functor $\calf\otimes \breve{\calo}_{F_v}$ is isomorphic to $\calg$, hence represented by $\widehat{\Omega}\widehat{\otimes}\breve{\calo}_{F_v}$. We refer to \cite{bc} for a more detailed explanation of Drinfeld's powerful ideas.
\\

We now review some results on the covariant Dieudonn\'e-Cartier theory, and we sketch its relation with the functor $\calf$.

For a $\calo_{F_v}$-algebra $\calo$, let $W$ denote the Witt vectors of $\calo$ endowed with the $q$-Frobenius $\sigma$. Consider the Frobenius $F$ and Verschiebung $V$ operators on $W$, characterized by the relations $FV=VF=\varpi_{F_v}$, $Fw=w^\sigma F$, $wV=Vw^\sigma$, for $w\in W$. 
The $\mathit{Cartier}$ ring $E_k$ is defined as the $V$-adic completion of the ring of non-commutative polynomials $W[F,V]$. We call a ($\calo_{F_v}$-)$\mathit{Dieudonne}$ module  a left $E_k$-module $D$ such that 
\begin{itemize}
	\item $D/VD$ is free of finite rank;
    \item $V$ is injective;%\footnote{a.k.a. $\mathit{reduced}$ Vershiebung} on $D$;
    \item $D$ is $V$-adically complete and separated.
\end{itemize}

 The fundamental result of Cartier-Dieudonn\'e theory states there is an equivalence of categories between formal  $\calo_{B_v}$-modules over $\calo_{F_v}$ and $\calo_{F_v}[\Pi]$-Dieudonn\'e  modules (see, for instance, \cite[Thm.p.74]{bc}). 
 \\Note that under this equivalence the height of the formal modules is preserved in the rank of the Dieudonn\'e module.
 \\Moreover we say that $D$ is $\mathit{special}$ if both $D_0/VD_1$ and $D_1/VD_0$ are both free of rank $1$, where $D_i$'s are induced by the $\Z/2\Z$-grading. Via this equivalence, $D$ is special if and only if the corresponding formal module $X$ is such. 
 
 Drinfeld thus obtained in \cite{drin} the correspondence between the triples $(\eta, T, u)$ and Dieudonn\'e modules of $X$, which we summarize with the following bijections
 \begin{equation}\label{Dieudonne}
 \Set{\begin{array}{c}
 		\text{Special formal modules}\\
 		\text{of height $4$}  \\
 \end{array}}
 \longleftrightarrow
 \Set{
 	\begin{array}{c}
 		\text{Special Dieudonn\'e}
 		\\
 		\text{modules of $\rk$-4}
 \end{array}}
 \longleftrightarrow
 \Set{\begin{array}{c}
 		(\eta, T, u)%\colon \eta \hookrightarrow D\oplus D^\sigma\twoheadrightarrow T
 \end{array}}
 \end{equation}
 
 where the sets are considered up to their respective isomorphisms.
 
Let us briefly explain these equivalences. From what discussed above, we have that for $D$ the Dieudonn\'e module of a formal module $X$, we obtain that $D/VD$ corresponds to $\Lie(X)$, which is associated to $T$, with the compatibility of their various actions and gradings. Note that $T$ is required to be a sheaf, and this is achieved by applying (the inverse of) Hartshorne's classical construction of a sheaf of modules.

On the other hand, the construction of the counterpart of $\eta$ is rather sophisticated, and here we just recall the main definitions following \cite[pp.77-86]{bc}.

The $\mathit{modified}$ Dieudonn\'e module $N(D)$ is the cokernel of the map
\[
D\hookrightarrow D\oplus D^\sigma,\;d\mapsto(Vd,-\Pi d) .
\]
A key technical point of this construction is replacing the action of $F$: let us forget the action of the Frobenius, so that $N(D)$ inherits a $W[V,\Pi]$-action. Let $D$ be special, and consider the map $\lambda_D\colon N(D)\rightarrow N(D)$ defined as $[d,d']\mapsto\Pi d+Vd'$. There exists a unique map $L_D\colon D\rightarrow N(D)$ such that $\lambda_D\circ L_D= F$ by \cite[Prop.3.8]{bc}. We thus obtain the $\mathit{modified}$ Frobenius 
\[
\varphi_D\colon N(D)\rightarrow N(D),\;[d,d']\mapsto L_D(d)+[d',0]
\]
which is a $\calo_{F_v}[\Pi]$-linear map. Finally, to every special Dieudonn\'e $\calo_{F_v}[\Pi]$-module $D$ we associate a $\calo_{F_v}[\Pi]$-module $\eta_D$ such that
$$\eta_D=N(D)^{\varphi_D=1}=\{z\in N(D): \varphi_D(z)=z \}.$$
We also define $u_D\colon\eta_D\rightarrow D/VD$ to be the $\calo_{F_v}[\Pi]$-linear, $\deg$-0 map defined by composing the natural maps $\eta_D\hookrightarrow N(D)$ and $N(D)\twoheadrightarrow D/VD$.

Once again, by applying Hatshorne's classical construction of a sheaf of modules,  we obtain the sheaf  $\tilde{\eta}_D$, which we keep referring to, by slight abuse of notation, as $\eta_D$.

\subsubsection{Local Ribet Bimodules}\label{lRb}
In this section we exploit the beautiful work of Ribet \cite{rib}, and we connect it with Drinfeld moduli interpretation discussed before.

Fix a place $v$ of $F$ which ramifies both in $K$ and in $B$.
A local  $(\calo_{B_v},\calo_{B_v})$-$\mathit{bimodule}$ $\calm$ is a free module of finite rank over $\calo_{F_v}$ furnished with the structure of left and right projective $\calo_{B_v}$-module. Once we denote by $\gotb$ the maximal ideal of $\calo_{B_v}$, we say that $\calm$ is  $v$-$\mathit{admissible}$ if 
\[
\gotb\calm=\calm\gotb.
\]

For $\calm$ $v$-admissible, the classification result \cite[Thm.1.2]{rib} shows that 
\[
\calm\simeq \calo_{B_v}^{r_v}\oplus\gotb^{s_v}
\]
where $\calo_{B_v}$ and $\gotb$ are regarded as bimodules via the natural multiplication of $\calo_{B_v}$ on itself and on $\gotb$. For such a decomposition, we say that $\calm$ is of $\mathit{type}$ $(r_v,s_v)$, and we impose that $r_v+s_v=2$. Here we deal with bimodules of $\rk$-$8$. 

Note that since $\calm$ is a right-$\calo_{B_v}$-module, free of rank $n>4$ over $\calo_{F_v}$, then $\calm$ is free as a right $\calo_{B_v}$-module (see \cite[p.38]{rib}). After fixing an isomorphism, we have the natural identification $\End_{\calo_{B_v}}(\calm)=M_2(\calo_{B_v})$, so that the left $\calo_{B_v}$-module structure of $\calm$ can be given by
\[
f\colon\calo_{B_v}\hooklongrightarrow M_2(\calo_{B_v}).
\]
The projectivity of $\calm$ imples that $f$ is optimal\footnote{Whose definition is recalled in the Appendix, Section \ref{grosspoints}.}, and  by \cite[Thms.1.4,1.6]{rib} the isomorphism class of $\calm$ is determined by the $\GL_2(\calo_{B_v})$-conjugacy of $f$.  This means that we have the following bijections
\begin{equation}\label{rbim}
\Set{
	\begin{array}{c}
		\text{ring homorphisms}
		\\
		\text{$f\colon\calo_{B_v}\rightarrow M_2(\calo_{B_v})$}\\
		\text{up to $\GL_2(\calo_{B_v})$-conjugacy}\\
\end{array}}
\longleftrightarrow
\Set{\begin{array}{c}
	\text{$(\calo_{B_v},\calo_{B_v})$-bimodules}\\
	\text{free or $\rk$-8 over $\calo_{F_v}$}\\
	\text{up to isomorphism}\\
\end{array}}.
\end{equation}

Moreover, in term of $f$, we obtain that  $v$-admissibility means that $f(\gotb)\subseteq M_2(\gotb)$. Let $\varpi_{K_v}$ be a uniformizer of $\calo_{K_v}$. By the ramification hypothesis,  we have that $\gotb$ is generated by $\varpi_v$, i.e., $\gotb=\varpi_{K_v}\calo_{K_v}$. It immediately follows that $f(\gotb)$ consists of the matrices whose coefficients are divisible by $\varpi_{K_v}$.

\begin{lem}\label{eichorder}
	Let $f\colon\calo_{B_v}\hooklongrightarrow M_2(\calo_{B_v})$ be such that $f(\gotb)\subseteq M_2(\gotb)$ corresponding to a bimodule of type $(1,1)$. Then the commutant of $f(\calo_{B_v})$ in $M_2(\calo_{B_v})$ is an Eichler order of level $\varpi_v$ in $M_2(\calo_{F_v})$.
\end{lem}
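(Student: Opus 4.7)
The plan is to exploit the classification \eqref{rbim} to reduce the lemma to an explicit computation in a preferred model. Since type $(1,1)$ bimodules form a single $\GL_2(\calo_{B_v})$-conjugacy class of embeddings $f$, and conjugation preserves the isomorphism class of the commutant, it suffices to prove the statement for one convenient representative. First I would fix a uniformizer $\Pi$ of $\calo_{B_v}$ (generating $\gotb$ on both sides and satisfying $\Pi^2\in\varpi_v\calo_{F_v}^\times$ by ramification), take $\calm=\calo_{B_v}\oplus\gotb$, and trivialize the right $\calo_{B_v}$-module structure via the isomorphism $\calo_{B_v}^{2}\xrightarrow{\sim}\calm$, $(x,y)\mapsto(x,\Pi y)$. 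In this trivialization left multiplication by $b$ reads
\[
f(b)=\begin{pmatrix}b & 0\\ 0 & \Pi^{-1}b\Pi\end{pmatrix},
\]
and since $\Pi$ normalizes both $\calo_{B_v}$ and $\gotb$, the integrality and $v$-admissibility $f(\calo_{B_v})\subseteq M_2(\calo_{B_v})$, $f(\gotb)\subseteq M_2(\gotb)$ are automatic.

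Next I would compute the commutant of $f(\calo_{B_v})$ in $M_2(B_v)$ by imposing $Mf(r)=f(r)M$ for every $r\in\calo_{B_v}$. The diagonal relations force the diagonal entries to lie in $Z(\calo_{B_v})=\calo_{F_v}$, while the off-diagonal relations translate into $b\Pi^{-1},\Pi c\in Z(B_v)=F_v$; hence $b$ and $c$ both lie in the line $F_v\cdot\Pi$ of $B_v$. Imposing integrality then confines $b$ and $c$ to $\calo_{F_v}\cdot\Pi$.

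Finally, I would invoke the double centralizer theorem: the commutant $C$ of $f(B_v)$ in $M_2(B_v)$ is a $4$-dimensional central simple $F_v$-algebra, which must be split because $M_2(B_v)\simeq f(B_v)\otimes_{F_v}C$ forces $C$ to annihilate the Brauer class of $B_v$, hence $C\simeq M_2(F_v)$. I would then pin down an explicit $F_v$-algebra isomorphism $\Phi\colon C\xrightarrow{\sim} M_2(F_v)$ by matching basis vectors and rescaling a single off-diagonal slot by $\Pi$ (using $\Pi^2=\varpi_v$ up to a unit). Under $\Phi$, the integral commutant computed above transports to
\[
\begin{pmatrix}\calo_{F_v} & \calo_{F_v}\\ \varpi_v\calo_{F_v} & \calo_{F_v}\end{pmatrix},
\]
which is exactly the standard Eichler order of level $\varpi_v$ in $M_2(F_v)$.

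The main obstacle is the book-keeping between the two uniformizers $\Pi$ and $\varpi_v$ (linked by $\Pi^2\in\varpi_v\calo_{F_v}^\times$): the factor of $\varpi_v$ in the lower-left corner of the Eichler order arises from a single surviving factor of $\Pi$ when transporting $C$ onto $M_2(F_v)$, and one must rescale exactly one off-diagonal direction so that $\Phi$ is multiplicative and integrally well-normalized. Conceptually this asymmetry is the algebraic shadow of the asymmetric decomposition $\calo_{B_v}\oplus\gotb$ in the type-$(1,1)$ model, or equivalently of the fact that such a bimodule corresponds to an edge of the Bruhat--Tits tree for $\PGL_2(F_v)$, whose stabilizer is tautologically an Eichler order of level $\varpi_v$.
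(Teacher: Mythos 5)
Your argument is correct, but it follows a different route than the paper: the paper's proof is a one-line citation, deducing the lemma from the bimodule classification recalled in Section \ref{lRb} together with Ribet's result \cite[Cor.~1.5]{rib}, whereas you reprove the commutant statement from scratch. Both arguments rest on the same input, namely that the type-$(1,1)$ bimodule is unique up to isomorphism and that, via the bijection (\ref{rbim}), this pins $f$ down up to $\GL_2(\calo_{B_v})$-conjugacy, so it suffices to treat the model $\calm=\calo_{B_v}\oplus\gotb$. From there your computation is sound: trivializing the right module by $(x,y)\mapsto(x,\Pi y)$ gives $f(b)=\mathrm{diag}(b,\Pi^{-1}b\Pi)$, the commutation relations force diagonal entries in $\calo_{F_v}$ and off-diagonal entries in $\calo_{F_v}\Pi$, and the rescaling of one off-diagonal slot (using $\Pi^2=\varpi_v$, which is exactly the normalization of $\Pi$ fixed in Section \ref{moduli}) identifies this order with $\begin{pmatrix}\calo_{F_v} & \calo_{F_v}\\ \varpi_v\calo_{F_v} & \calo_{F_v}\end{pmatrix}$; the double centralizer step is not even strictly needed since your explicit isomorphism already exhibits the commutant algebra as $M_2(F_v)$. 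What your approach buys is a self-contained, transparent proof that makes visible where the level $\varpi_v$ comes from (the single surviving factor of $\Pi$ reflecting the asymmetry of $\calo_{B_v}\oplus\gotb$, equivalently the edge stabilizer on the Bruhat--Tits tree); what the paper's citation buys is brevity and consistency with its systematic reliance on Ribet's local classification.
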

\begin{proof}
	This follows from the previous discussion and \cite[Cor.1.5]{rib}.
\end{proof}

\begin{rmk}
	In \cite[Def.5.8 p.94]{bc} a triple $(\eta, T, u)$ is defined  $\mathit{admissible}$ if, for $i\in\Z/2\Z$,
	\begin{itemize}
		\item if $S_i\subset S$ is the zero locus of $\Pi\colon T_i\rightarrow T_{i+1}$, then $\eta_{i}|_{S_i}$ is a constant sheaf with fibers isomorphic to $\calo^2$
		\item for every geometric point $x$ of $S$, denote $T(x):=T\otimes k(x)$. Then $u$ induces the injection $$\eta_x/\Pi\eta_x\hookrightarrow T(x)/\Pi T(x).$$
		
	\end{itemize}
	We remark that the case of our interest, i.e., when such such data come from the Dieudonn\'e module of a formal module, these two conditions are automatically satisfied \cite[Prop. 5.10]{bc}.
	
	Lastly, we underline that this terminology is  $\mathit{not}$ related to the concept of $v$-$\mathit{admissible}$ Ribet bimodule. This is particularly clear from a geometric perspective.
\end{rmk}

\subsubsection{Cerednik-Drinfeld Uniformization}

  Let $\widehat{\cals}$ denote the formal completion along the special fiber over $v$. Let $B'$ be the quaternion algebra obtained by changing invariants at $v$ and $\tau$, i.e., $B'$ is definite, unramified at $v$, and $B'_\ell\simeq B_\ell$ for all $\ell\neq v, \tau$. Let  $G'$ be the reductive group  whose functor of points is $G'(A)=(B'\otimes_\Q A)^\times$.   
  \\Let $U^v$ a compact open subgroup of $G(\A_f^v)$  small enough and assume that $U_v$ is maximal, i.e., $U_v\simeq \calo_{B_v}^\times$. Then Cerednik-Drinfeld uniformisation (see \cite{drin}, \cite[p.241]{yzz}, \cite[p.44]{sz2} ) consists of the following isomorphisms
    
    \begin{equation}\label{cdu}
    	\begin{aligned}
    &\scal_U^{an,\bullet}\simeq G(\Q)\backslash (\Omega^\bullet\widehat{\otimes}\breve{F}_v)\times\Z\times G'(\A_f^v)/ U^v
	\\
	&\widehat{\cals}^{\bullet}_U\simeq G(\Q) \backslash \widehat{\Omega}^\bullet\widehat{\otimes}\breve{\calo}_{F_v}\times \Z\times G'(\A_f^v)/U^v
	\end{aligned}
	\end{equation}
 
 for the superscript $\bullet\in \{\emptyset, \;' \}$, where $\bullet=\emptyset$ means the absence of any index, while in the other case we define 
 \begin{equation*}
 	 \scal_U^{an,\prime}=\scal_U^{an}\widehat{\otimes} \breve{K}_v,\;\;\;\widehat{\cals}'_U=\widehat{\cals}_U\widehat{\otimes}\breve{\calo}_{K_v}
 \end{equation*}
 and
 \begin{equation*}
 \Omega'=\Omega\widehat{\otimes}\breve{K}_v.
 %\;
 	 %\;\;\;\widehat{\Omega}'=\widehat{\Omega}\widehat{\otimes}\breve{\calo}_{K_v}.
 \end{equation*}
If $v$ is unramified in $K$, then we can consider $\bullet=\emptyset$. Since $K_v$ is unramified over $F_v$, it follows that $\breve{K}_v=\breve{F}_v$.
\noindent
\\If $v$ is ramified in $K$, then $\widehat{\Omega}'$ consists of the resolution of all double points on the special fiber of $\widehat{\Omega}\widehat{\otimes}\breve{\calo}_{K_v}$, and this will imply some changes in Section \ref{dualgr}.

In  both case, we conclude that studying the irreducible components of the special fiber of the Shimura curve can be reduced through (\ref{cdu}) to the special fiber  $\widehat{\Omega}^\bullet\otimes_k \bar{k}$.

 \subsubsection{Special Fiber of $\widehat{\Omega}^\bullet$}\label{dualgr}
 Let us first analyze the case $\bullet=\emptyset$.
 \noindent
 \\The combinatorial structure of the special fiber of $\widehat{\Omega}$ is recorded by its dual graph.
 We recall that the dual graph of a semi-stable\footnote{I.e., geometrically
 	reduced with every singularity an ordinary double point.} curve is defined by taking as vertices its irreducible components, which are connected by an edge whenever the two corresponding components intersect in  a subscheme of codimension $1$. In particular, we focus on the dual graph of the special fiber of $\widehat{\Omega}$. 
 
 The irreducible components of $\widehat{\Omega}_{\bar{k}}$ are projective lines  (see \cite[p.145]{bc}). %In fact, let $C_1$ be the blowing-up of $\p^{1}_{\overline{k}}$ along the $\overline{k}$-rational points of $\p^1_{\overline{k}}$. Then 
 %the irreducible components of $\widehat{\Omega}$ are all isomorphic to $C_{1}$.

 Clearly, since all irreducible components of $\widehat{\Omega}_{\overline{k}}$ are $\p^1_{\overline{k}}$, hence smooth, there is no self intersection, i.e., the dual graph has no loops.
 
 \begin{lem}\label{dualgraph}
 	The dual graph of the special fiber of $\widehat{\Omega}$ is isomorphic to the Bruhat-Tits tree $\calt$. %of $\PGL_2(K)$. 
 	
 \end{lem}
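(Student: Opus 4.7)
The plan is to read off both the vertex set and the edge set of the dual graph directly from the explicit construction of $\widehat{\Omega}$, and then match them against $\calt$. Since $\widehat{\Omega} = \bigcup_T \widehat{\Omega}_T$ over finite subtrees $T \subset \calt$, and each $\widehat{\Omega}_T$ is built by gluing the schemes $\p_s$ for $s$ a vertex of $T$, every irreducible component of $\widehat{\Omega}_{\bar k}$ arises as (the strict transform of) the special fiber of some $\p_s = \p^1_{\calo_{F_v}}(\Lambda)$, and hence is a copy $C_s$ of $\p^1_{\bar k}$. Conversely, each vertex $s$ contributes exactly one such component, and distinct vertices produce distinct components. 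This yields the bijection on vertex sets.

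For the edges, for adjacent vertices $s, s'$ the scheme $\p_{[s,s']}$ is obtained by gluing $\p_s$ and $\p_{s'}$ along a single $k$-rational point of the special fiber, which upon passing to the formal completion $\widehat{\Omega}_{[s,s']}$ becomes an ordinary double point at which $C_s$ and $C_{s'}$ meet transversally. So every edge of $\calt$ yields an edge of the dual graph. Conversely, the only intersections in the special fiber of a given $\widehat{\Omega}_T$ are those introduced by the gluings along edges of $T$, hence two components $C_s, C_{s'}$ meet only if $\{s, s'\}$ is an edge of $\calt$. Combined with the observation already recorded in the excerpt that each $C_s \cong \p^1_{\bar k}$ is smooth and therefore has no self-intersection, this gives a one-to-one correspondence between intersection points and edges of $\calt$.

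The main obstacle, and really the only non-formal ingredient, is the combinatorial compatibility of the two structures: one must check that, at a given vertex $s = [\Lambda]$, the $k$-rational points of $\p_{s,\bar k}$ that get blown up to produce double points biject naturally with the neighbors of $s$ in $\calt$. Both sets are canonically identified with $\p(\Lambda/\varpi_{F_v}\Lambda) \cong \p^1(k)$: a rational point corresponds to a line $\ell \subset \Lambda/\varpi_{F_v}\Lambda$, whose preimage in $\Lambda$ together with $\varpi_{F_v}\Lambda$ defines a sublattice $\Lambda''\subset\Lambda$ with $\Lambda/\Lambda''$ of length $1$, so that $s' = [\Lambda'']$ is the associated adjacent vertex. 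One then verifies that the $k$-rational point of $\p_{s,\bar k}$ along which $\p_s$ is glued to $\p_{s'}$ is precisely the one attached to $\ell$. Once this matching is in place, the bijection of vertices and edges constructed above assembles into the desired graph isomorphism between the dual graph of $\widehat{\Omega}_{\bar k}$ and $\calt$.
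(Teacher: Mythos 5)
Your proposal is correct and follows essentially the same route as the paper's proof: identify each vertex $s=[\Lambda]$ with the component $\p^1_s$ of $\widehat{\Omega}_{\bar k}$ and each edge with the $k$-rational gluing point at which the two adjacent components meet, then observe that intersections occur exactly along edges. Your additional verification that the neighbours of $s$ in $\calt$ biject with the points of $\p(\Lambda/\varpi_{F_v}\Lambda)\cong\p^1(k)$ only makes explicit a step the paper leaves implicit, so there is no substantive difference in approach.
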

 
 \begin{proof}

 	Consider the map from $\calt$ to the dual graph of $\widehat{\Omega}_{\overline{k}}$  defined respectively on vertices and edges by: 
 	\begin{equation*}
 		\begin{aligned}
 			& v\mapsto\p^1_v
 			\\
 			& [v,v']\mapsto (\Lambda\rightarrow \Lambda/\Lambda'\simeq \bar{k})
 		\end{aligned}
 	\end{equation*}
 	
 	where  $(\Lambda\rightarrow \Lambda/\Lambda'\simeq \overline{k})$ is the $\overline{k}$-rational point defined by $v'$ for $v,v'$ adjacent.
 	Thus in the dual tree $\p_v$ intersects $\p_{v'}$ if and only if $v$ and $v'$ are adjacent.
 	
 	It follows that the dual graph of $\widehat{\Omega}_{\overline{k}}$ is isomorphic to the dual tree  which consists  of projective lines over $\overline{k}$ intersecting each other in their $\overline{k}$-rational points.
 \end{proof}

\begin{rmk}
By Lemma \ref{dualgraph}  the configuration of the irriducible components of $\widehat{\Omega}\otimes\bar{k}$ is described by $\calt$. 
\\Moreover  Lemma \ref{dualgraph} also implies that the edges of $\calt$ correspond to the singular points in $\widehat{\Omega}_{\bar{k}}$.
\end{rmk}

On the other hand, in the case $\bullet=\;'$, the special fiber $\widehat{\Omega}'\otimes\bar{k}$ consists of the strict transforms of the irreducible components of $\widehat{\Omega}\otimes\bar{k}$. Through this, the exceptional divisors corresponds to double points in $\widehat{\Omega}$. Hence, we have the following bijection between  the exceptional divisors and the intersection points described above  
		\[
	\Set{\begin{array}{c}
			\text{exceptional divisors}\\
			\text{of\; $\widehat{\Omega}'\otimes\bar{k}$}  \\
	\end{array}}
	\longleftrightarrow
	\text{Sing}(\widehat{\cals}_k):=\Set{
		\begin{array}{c}
			\text{singular points}
			\\
			\text{of $\widehat{\cals}_k$ }
	\end{array}}.
	\]
	
Lastly, we conclude with an algebraic characterization of the singularities exploiting the moduli interpretation from Section \ref{moduli}.	Let $D$ be the Dieudonn\'e module of a formal module as in the bijection (\ref{Dieudonne}).
We say that $i\in\Z/2\Z$ is $\mathit{critical}$ index for $D$ if 
 
 $$\Pi\colon D_i/VD_{i-1}\rightarrow D_{i+1}/VD_i$$ 
 
 is zero, i.e., $\Pi D_i\subset VD_i$.

Since the tangent space\footnote{I.e., deformations over the dual numbers.} of $\calg$ is $2$-dimensional, then the irreducible components, i.e., projective lines over $\overline{k}$, intersect in the points with exactly two critical indexes.
 This means that they interect in a point representing $(\eta, T, u)$ with $\Pi\colon T_0=\Lie(X)_0\rightarrow T_1=\Lie(X)_1$ and $\Pi\colon \Lie(X)_1\rightarrow \Lie(X)_0$ are both the zero map, where we recall that $\Lie(X)_i=D_i/VD_{i-1}$. In other words, $\Pi D_i\subset VD_{i}$.

\subsection{Special CM Points}
%Let $\tau\colon K\hookrightarrow \C$ an $F$-embedding extending $F\hookrightarrow K$. 
The set of $F\otimes\A_f$-embeddings $\tau\colon K\otimes \A_f\hookrightarrow B\otimes\A_f$ is non empty (see, for instance \cite{sz2}). For any such $\tau$, the group $\tau(K^\times)$ is contained in $G(\A_f)$, and hence it acts on the Shimura curve $\scal_U$.
\\The scheme of $\mathit{CM}$ $\mathit{points}$ by $(K,\tau)$ is the fixed-point affine subscheme $\scal_U^{\tau(K^\times)}$. Let $ K_{\tau^{-1}(U)}$ denote the abelian extension of $K$ with Galois group $K^\times\backslash (K^\times\otimes \A_f)$. The theory of complex multiplication (as in \cite[Section 5.2]{sz2}) shows that $\scal_U^{\tau(K^\times)}$ is isomorphic to the $F$-scheme $\Spec K_{\tau^{-1}(U)}$. 
\\The CM ind-subscheme of $\scal_U$ is 
\[
\scal^{\text{CM}}_U=\bigcup_{(K,\tau)}\scal_U^{\tau(K^\times)}.
\]
The algebraicity of CM points is guaranteed by Shimura theory \cite{del}, which shows that they are defined over the maximal abelian extension of $K$.

By the moduli interpretation, for a CM point $z$ over the base-changed scheme $\CM\otimes F_v$, consider the corresponding formal $\calo_{B_v}$-module $X$. Then the $\calo_{F_v}$-order $\End(z)$ in $K_v$ is the ring of $\calo_{F_v}$-linear endomorphisms of $X$, and it has a well-known classifying description. Indeed we have, for a unique positive integer $c$,
\[
\End(z)=\calo_c:=\calo_{F_v}+\varpi_{v}^c\calo_{K_v},
\]
where $\varpi_v$ is the uniformizer of $\calo_{F_v}$ and $c$ is the ($v$-)$\mathit{conductor}$ of $z$.

In terms of double quotients, we have also the following description. Let $T$ be the $\Q$-rational torus in $G$, i.e., $T=\text{Res}_{K/\Q}(\mathbb{G}_m)$.

\begin{lem}\label{CM}
	The set of CM points is in bijection with
\begin{equation}
	G(\Q)\backslash G(\Q)h_0\times G(\A_f)/ U\simeq T(\Q)\backslash G(\A_f)/U.
\end{equation}
\end{lem}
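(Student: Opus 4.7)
\begin{skprf}
The plan is to establish the two claimed bijections in turn: first identify the set $\scal^{\text{CM}}_U$ with the image of $G(\Q)h_0\times G(\A_f)$ in $\scal^{an}_U$, and then apply orbit--stabilizer using $\Stab_{G(\Q)}(h_0)=T(\Q)$.

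For the first step, the key observation is that by construction $h_0\colon\mathbb{S}\to G_\R$ factors through $T_\R$, so $T(\R)$ fixes $h_0$. Consequently, for every $g\in G(\A_f)$ the class $[h_0,g]$ is invariant under the right action of $\tau_0(K^\times)$, where $\tau_0$ denotes the adelic completion of the fixed embedding $K\hookrightarrow B$. Translating by $\gamma\in G(\Q)$, every $[\gamma h_0, g]$ is fixed by the conjugate $\gamma\tau_0(\cdot)\gamma^{-1}$, which yields an embedding $\tau\colon K\otimes\A_f\hookrightarrow B\otimes\A_f$ under which $[\gamma h_0,g]$ is invariant. Hence the image of $G(\Q)h_0\times G(\A_f)$ is contained in $\scal^{\text{CM}}_U$. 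For the reverse inclusion, suppose $[h,g]$ is fixed by $\tau(K^\times)$ for some $\tau$. Unwinding the double quotient, for every $k\in K^\times$ there exists $\gamma_k\in G(\Q)$ with $\gamma_k h=h$ and $\gamma_k g\in g\tau(k)U$. Shrinking $U$ if necessary, this forces $\Stab_{G(\Q)}(h)$ to contain a $\Q$-rational torus isomorphic to $T$; by Skolem--Noether any two $F$-embeddings $K\hookrightarrow B$ are $B^\times$-conjugate, so this torus is $G(\Q)$-conjugate to $T$, and consequently $h$ lies in the $G(\Q)$-orbit of a point of $\mathscr{H}$ fixed by $T(\R)$, i.e.\ in $G(\Q)h_0$.

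For the second step, I compute $\Stab_{G(\Q)}(h_0)$. Since $B\otimes_\Q\R\simeq M_2(\R)\times\mathbb{H}^{d-1}$ and $h_0$ is trivial at the places $\tau_2,\dots,\tau_d$, the stabilizer in $G(\R)$ factors as $\C^\times\times(\mathbb{H}^\times)^{d-1}$, the first factor being the maximal torus of $\GL_2(\R)$ fixing $h_0\in\mathscr{H}$. An element $b\in B^\times=G(\Q)$ lies in this stabilizer if and only if $b_{\tau_1}\in\C^\times\subset\GL_2(\R)$, i.e.\ commutes with the image of $K$ at $\tau_1$. Since $K$ is a maximal commutative $F$-subalgebra of $B$, the centralizer of $K$ in $B$ equals $K$ itself, whence $b\in K^\times=T(\Q)$. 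Therefore $\Stab_{G(\Q)}(h_0)=T(\Q)$, the orbit map $G(\Q)/T(\Q)\to G(\Q)h_0$ is a bijection, and
\[
G(\Q)\backslash\bigl(G(\Q)h_0\times G(\A_f)\bigr)/U\;\simeq\;T(\Q)\backslash G(\A_f)/U
\]
by folding the diagonal $G(\Q)$-action into the first factor.

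The main technical subtlety lies in the first step: one must upgrade fixity under the adelic torus $\tau(K^\times)$ into the existence of a stabilizing $\Q$-rational torus on the archimedean side, rather than a merely real one. This uses density of $K^\times$ in $K\otimes\A_f$, discreteness of $G(\Q)$ in $G(\A_f)$, and Skolem--Noether to normalize the resulting torus back to the fixed $T$.
\end{skprf}
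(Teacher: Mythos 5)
The paper itself offers no argument here beyond citing \cite[Lemma 3.7]{cv}, and your overall route --- identify the CM locus with the image of $G(\Q)h_0\times G(\A_f)$ and then fold by orbit--stabilizer using $\Stab_{G(\Q)}(h_0)=T(\Q)$ --- is exactly the standard argument behind that citation. Your second step is sound: the stabilizer of $h_0$ in $G(\R)$ is the centralizer $\C^\times\times(\mathbb{H}^\times)^{d-1}$, injectivity of $B\hookrightarrow B_{\tau_1}$ upgrades ``$b_{\tau_1}$ commutes with $K\otimes_{F,\tau_1}\R$'' to ``$b$ centralizes $K$ in $B$'', the centralizer of the quadratic subfield $K$ in $B$ is $K$ itself, and the folding of the diagonal $G(\Q)$-action is routine.

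Your first step, however, has two concrete problems. First, the claim that $[h_0,g]$ is invariant under right translation by $\tau_0(K^\times)$ for \emph{every} $g$ is false: for $k\in K^\times$ one computes $[h_0,g\tau_0(k)]=[h_0,\tau_0(k)^{-1}g\tau_0(k)]$, which need not equal $[h_0,g]$. The correct statement is that $[h_0,g]$ is fixed by $\tau(K^\times)$ for the conjugated adelic embedding $\tau=g^{-1}\tau_0 g$ --- this is precisely why $\scal^{\text{CM}}_U$ is defined as a union over all $F\otimes\A_f$-embeddings $\tau$ --- and with that correction the forward inclusion is a one-line check. Second, in the reverse inclusion the phrase ``shrinking $U$ if necessary'' is not available: the lemma is asserted at the given level $U$, and a $\tau(K^\times)$-fixed point at level $U$ need not lift to a fixed point at deeper level, so you cannot reduce to small $U$. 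What must be argued at level $U$ is (a) that the elements $\gamma_k\in\Stab_{G(\Q)}(h)$ lie in $E_h^\times$, where $E_h\subset B$ is the commutative $F$-subalgebra of elements whose $\tau_1$-component fixes $h$ (so $E_h$ is $F$ or a quadratic extension of $F$ imaginary at $\tau_1$), and (b) that the relations $g^{-1}\gamma_k g\in\tau(k)U$ force $E_h\simeq K$ --- e.g.\ by comparing reduced characteristic polynomials after disposing of the $U$-factor. Step (b) is the actual content of the reverse inclusion and is exactly what you leave as an assertion. Finally, to conclude that such an $h$ lies in $G(\Q)h_0$ (rather than merely being fixed by some rational torus conjugate to $T$) you also need $\bar{h}_0\in G(\Q)h_0$, which follows by conjugating with an element of $B^\times$ inducing the nontrivial automorphism of $K$; this point should be made explicit.
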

\begin{proof}
	See \cite[Lemma 3.7, p.41]{cv}.
\end{proof}

We recall that the continuous, $G(\A_f)$-equivariant Galois action of $G_K^{ab}$ on $\scal^{\text{CM}}_U$ is given by multiplication of $T(\A_f)$ via Artin's reciprocity map $\rec_K$.
 Namely, we have that, for $x\in\scal^{\text{CM}}_U$ corresponding to $[g]\in G(\A_f)$,  and $\sigma=\rec_K(t)$, with $t\in T(\A_f)$,
\[
\sigma x=[tg]\in\scal^{\text{CM}}_U.
\]

In what follows, we will only need points in $\CM$ whose conductor at $v$ is maximal, i.e., $c=0$. In the literature they are called $\mathit{special}$ CM points (see \cite[p.261]{sz2}). We denote this set by $\scal^{\text{SCM}}_U$, whose description as a double quotient is given by the next Lemma.

\begin{lem}\label{sCM}
	The special CM points $\scal^{\text{SCM}}_U$ are in bijection with
	\begin{equation}\label{SCM}
		T(\Q)_0\backslash G'(\A_f^v)/U^v,
	\end{equation}
where $T(\Q)_0$ denotes the element whose component at $v$ has order $0$.
\end{lem}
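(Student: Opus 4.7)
The plan is to combine Lemma \ref{CM} with the Cerednik--Drinfeld decomposition (\ref{cdu}) of the previous subsection, using that in our ramified setting every CM point is automatically special at $v$.

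First I would argue that $\SCM_U=\CM_U$. By the moduli interpretation of Section \ref{moduli}, for a CM point $z=[g]$ the local endomorphism order at $v$ is realized as $\tau(K_v)\cap g_v\calo_{B_v}g_v^{-1}$. Since $B_v$ is a quaternion division algebra, $\calo_{B_v}$ is its \emph{unique} maximal $\calo_{F_v}$-order, and is therefore preserved by any conjugation. The intersection reduces to $\tau(K_v)\cap\calo_{B_v}=\tau(\calo_{K_v})$, which one checks (by comparing $v_K$ with the reduced-norm valuation on $B_v$) holds in both the inert and the ramified subcase. Hence the $v$-conductor $c$ vanishes identically, and $\SCM_U=\CM_U$.

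Next I would unwind the decomposition $G(\A_f)=B_v^\times\times G(\A_f^v)$ and $U=\calo_{B_v}^\times\times U^v$. The identification $B\simeq B'$ at all finite places outside $v$ gives a canonical isomorphism $G(\A_f^v)\simeq G'(\A_f^v)$, compatible with the diagonal embedding of $T(\Q)=K^\times$. Feeding this into Lemma \ref{CM} gives
\[
\SCM_U\simeq T(\Q)\,\big\backslash\,\bigl((B_v^\times/\calo_{B_v}^\times)\times G'(\A_f^v)/U^v\bigr).
\]

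Finally I would collapse the local factor. Since $B_v^\times/\calo_{B_v}^\times\simeq\Z$ via the normalized valuation and $K^\times$ acts on this $\Z$ by translation through $v_K$, every $T(\Q)$-orbit has a representative with trivial $\Z$-coordinate, and the stabilizer of this class is exactly $T(\Q)_0$, the elements of $T(\Q)$ whose $v$-component is a unit. This yields the claimed bijection $\SCM_U\simeq T(\Q)_0\backslash G'(\A_f^v)/U^v$.

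The main obstacle I expect is the first step: translating the moduli-theoretic $\End(z)_v$ into the group-theoretic intersection $\tau(K_v)\cap g_v\calo_{B_v}g_v^{-1}$ must go through the Dieudonn\'e/Drinfeld dictionary of Section \ref{moduli}, and although it ultimately reduces to the uniqueness of $\calo_{B_v}$, spelling it out carefully (in particular reconciling it with the conductor notation $\calo_c=\calo_{F_v}+\varpi_v^c\calo_{K_v}$) is delicate. Once this identification is in hand, the remaining steps are essentially double-coset bookkeeping using the Cerednik--Drinfeld formulas.
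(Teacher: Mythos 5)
Your splitting $G(\A_f)=B_v^\times\times G(\A_f^v)$, $U=\calo_{B_v}^\times\times U^v$ and the identification $G(\A_f^v)\simeq G'(\A_f^v)$ are fine, and at a prime $v$ that is \emph{ramified} in $K$ your argument does give the statement, essentially fleshing out the paper's terse proof (which only observes that a special point has a representative with $g_v\in T(F_v)\cdot U_v$, that $T(\Q)_0$ is the set of global elements landing in $U_v$, and that $G(\A_f^v)\simeq G'(\A_f^v)$). The genuine gap is the inert case, which the lemma must also cover: $v\in S$ is only assumed ramified in $B$, and $K_v/F_v$ may be inert (this case is needed for the second half of the main theorem). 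Your first step asserts $\scal^{\text{SCM}}_U=\scal^{\text{CM}}_U$; but the ``special'' condition, as it is actually used in the paper, is that the class admits a representative whose $v$-component lies in $T(F_v)\cdot U_v=K_v^\times\calo_{B_v}^\times$. When $K_v/F_v$ is ramified this subgroup is all of $B_v^\times$ (a uniformizer of $K_v$ is a uniformizer of $B_v$), so every CM point is special and your step is harmless; when $K_v/F_v$ is inert it has index $2$, since the image of $K_v^\times$ in $B_v^\times/\calo_{B_v}^\times\simeq\Z$ is $2\Z$, and then $\scal^{\text{SCM}}_U\subsetneq\scal^{\text{CM}}_U$. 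Your computation $\tau(K_v)\cap g_v\calo_{B_v}g_v^{-1}=\tau(\calo_{K_v})$ only shows that the $v$-conductor cannot detect specialness at a prime where $B$ ramifies; it does not show that every CM point is special in the sense the lemma requires.

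The same issue breaks your last step: you collapse the factor $B_v^\times/\calo_{B_v}^\times\simeq\Z$ by claiming that $K^\times$ translates $\Z$ through $v_K$ and hence that every $T(\Q)$-orbit meets the slice $\{0\}\times G'(\A_f^v)/U^v$ with stabilizer $T(\Q)_0$. That surjectivity holds only when $v$ ramifies in $K$; for $v$ inert the global elements translate by even integers only, the parity of the $v$-valuation is an invariant of the double coset, and $T(\Q)\backslash\bigl(\Z\times G'(\A_f^v)/U^v\bigr)$ consists of \emph{two} copies of $T(\Q)_0\backslash G'(\A_f^v)/U^v$, indexed by that parity. Combined with your claim $\scal^{\text{SCM}}_U=\scal^{\text{CM}}_U$, the asserted bijection would therefore be false at an inert prime: the special points are exactly the even-parity sheet. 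The repair is to drop the identification $\scal^{\text{SCM}}_U=\scal^{\text{CM}}_U$, take the characterization ``$[g]$ has a representative with $g_v\in T(F_v)\cdot U_v$'' as the working definition of special (as in the paper's proof and in Lemma \ref{CM}'s coset language), and then your double-coset bookkeeping goes through uniformly in both the ramified and the inert case.
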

\begin{proof}
	%By \cite[Lemma 5.4.6 p.262]{sz2}, every special point has special formal module at $p$. Therefore...
	Every special CM point is represented by some $g\in G(\A_f)$ whose component at $v$ is in $T(F_v)\cdot U_v$. Note also that $T(\Q)_0$ can be viewed as the set of elements in $T(F)$ whose image in $T(F_v)$ is in $U_v$. Lastly, since $G(\A_f^v)\simeq G'(\A_f^v)$, we obtain the required bijection. 
\end{proof}

\begin{rmk}
	We remark that  special CM points can be related to $\mathit{good}$ CM point as defined in \cite[Def.1.6,  p.9]{cv2}. It is immediate that the first set is contained in the latter.
\end{rmk}

\subsection{Superspecial Points}

By Cerednik-Drinfeld uniformisation (\ref{cdu}) and Drinfeld moduli interpretation of $\widehat{\Omega}$, a geometric point $x$ in the special fiber of $\widehat{\cals}$ is said 
$\mathit{supersingular}$ if the corresponding formal $\calo_{B_v}$-module $X_k$ is isogenous to
\[
Y_k\oplus Y_k
\]
where $Y_k$  is a fixed formal $\calo_{F_v}$-module of height $2$ and dimension $1$ such that $\calo_{B_v}\simeq \End_{\calo_{F_v}}(Y_k)$.

\begin{lem}
 Every geometric point $x$ in $\widehat{\cals}_k$ is supersingular.
\end{lem}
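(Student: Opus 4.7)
The plan is to reduce the claim, via the Cerednik--Drinfeld uniformisation (\ref{cdu}) and Drinfeld's moduli description of $\widehat{\Omega}$ from Section \ref{moduli}, to a single assertion about the framing object $\X$. A geometric point $x\in\widehat{\cals}_k$ pulls back through (\ref{cdu}) to a geometric point of $\widehat{\Omega}\widehat{\otimes}\breve{\calo}_{F_v}$, which by Drinfeld's theorem corresponds to an isomorphism class $(X,\varrho)$ consisting of a special formal $\calo_{B_v}$-module $X$ of height $4$ over $\bar k$ together with a height-zero $\calo_{B_v}$-quasi-isogeny $\varrho\colon\X_{\bar k}\to X$. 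Restricting scalars to $\calo_{F_v}$, the map $\varrho$ is in particular an isogeny of formal $\calo_{F_v}$-modules, so it suffices to prove that $\X\sim Y_k\oplus Y_k$ as formal $\calo_{F_v}$-modules.

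To verify this, I would equip $Y_k\oplus Y_k$ with a compatible $\calo_{B_v}$-structure and invoke the uniqueness defining $\X$. Using the decomposition $\calo_{B_v}=\calo_{L_v}\oplus\Pi\calo_{L_v}$, let $a\in\calo_{L_v}$ act on $Y_k\oplus Y_k$ diagonally as the matrix $\mathrm{diag}(a,a^\sigma)\in M_2(\calo_{B_v})=\End_{\calo_{F_v}}(Y_k\oplus Y_k)$, and let $\Pi$ act by the anti-diagonal matrix $\begin{pmatrix}0 & \varpi_{F_v}\\ 1 & 0\end{pmatrix}$. A direct computation shows that this is a well-defined ring embedding $\iota\colon\calo_{B_v}\hookrightarrow\End_{\calo_{F_v}}(Y_k\oplus Y_k)$ for which $\iota(\Pi)^2=[\varpi_{F_v}]$, and the induced grading on $\Lie(Y_k\oplus Y_k)$ has both homogeneous components one-dimensional. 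Consequently $Y_k\oplus Y_k$ becomes a special formal $\calo_{B_v}$-module of height $4$ over $\bar k$, and by the defining uniqueness property of $\X$ there exists a $\calo_{B_v}$-linear quasi-isogeny $Y_k\oplus Y_k\sim\X$. Forgetting the $\calo_{B_v}$-structure and composing with $\varrho$ yields the desired isogeny of formal $\calo_{F_v}$-modules $X\sim Y_k\oplus Y_k$, proving supersingularity.

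The only delicate step is the explicit verification that the prescribed $\iota$ is a ring homomorphism and that $Y_k\oplus Y_k$ is \emph{special}. The first point follows because $\mathrm{diag}(a,a^\sigma)\cdot\iota(\Pi)=\iota(\Pi)\cdot\mathrm{diag}(a^\sigma,a)$ in $M_2(\calo_{B_v})$, which is exactly the image under $\iota$ of the quaternionic relation $a^\sigma\Pi=\Pi a$ for $a\in\calo_{L_v}$. For the second, one observes that $\calo_{L_v}$ acts on the two summands through its two distinct reductions into $\bar k$, so that $\Lie(Y_k\oplus Y_k)=\Lie(Y_k)_0\oplus\Lie(Y_k)_1$ is the decomposition into the two $1$-dimensional $\calo_{L_v}$-isotypic components, as required by the definition of a special formal $\calo_{B_v}$-module.
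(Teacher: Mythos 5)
Your proof is correct, but it follows a genuinely different route from the paper's. You pass through the moduli data: using the rigidification $\varrho\colon\X_{\bar k}\to X$ supplied by Drinfeld's theorem, you reduce the statement to the single claim $\X\sim Y_k\oplus Y_k$, and you prove that by writing down an explicit embedding $\calo_{B_v}\hookrightarrow M_2(\calo_{B_v})=\End_{\calo_{F_v}}(Y_k\oplus Y_k)$ (diagonal action of $\calo_{L_v}$ twisted by $\sigma$ on the second factor, anti-diagonal $\Pi$ with $\iota(\Pi)^2=\varpi_{F_v}$), checking specialness via the two distinct residue characters of $\calo_{L_v}$ on the two copies of $\Lie(Y_k)$, and then invoking the uniqueness of $\X$ up to isogeny among special formal $\calo_{B_v}$-modules of height $4$ over $\bar k$ -- an input the paper itself builds into its definition of the framing object, so your use of it is legitimate within the paper's framework. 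The paper instead argues intrinsically on $X_k$, without using $\varrho$: the rational Tate module $V_v(X_k)$ has $F_v$-rank at most $2$, but it is a representation of the division algebra $B_v$, whose nonzero representations have dimension at least $4$, so it vanishes; the same holds for $Y_k$, and the vanishing of both Tate modules is then used to produce the isogeny. Each approach has its merits: the paper's is shorter and does not appeal to the classification of special formal modules, but its last step (from trivial Tate module to an isogeny with $Y_k\oplus Y_k$) is left rather implicit, whereas your argument is complete once the uniqueness of $\X$ is granted, at the price of relying on that nontrivial classification. One small imprecision: $\varrho$ is only a quasi-isogeny, not an isogeny; but since $\varpi_{F_v}^m\varrho$ is an isogeny and multiplication by $\varpi_{F_v}$ is one as well, $X$ and $\X$ are indeed isogenous as formal $\calo_{F_v}$-modules, so your conclusion stands.
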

\begin{proof}
	Consider the $v$-adic Tate module of the formal module $X_k$ corresponding to $x$, i.e.,
	\[
	T_v(X_k):=\varprojlim_n X_k[\varpi_v^n]
	\]
	where $X_k[\varpi_v^n]$ denotes the $\varpi_v^n$-torsion points of $X_k$.
	Define $V_v(X_k):=T_v(X_k)\otimes F_v$. Then we have that $\rk_{F_v} V_v(X_k)\le 2$. Since a division quaternion algebra admits nontrivial representation of degree at least $4$, and $V_v(X_k)$ is a representation for $B_v$, then $\rk V_v(X_k)=0$. By the same argument also  $V_v(Y_k)$ has $F_v$-rank zero. Therefore we obtain
	\[
	V_v(X_k)\simeq V_v(Y_k)\oplus V_v(Y_k)
	\]
	which implies that the desired isogeny exists.
\end{proof}

%Assume that $v$ is ramified in $K$. 
We say that $x$ is $\mathit{superspecial}$
%\footnote{this terminology originates from superspecial abelian varieties, which isomorphic to products of supersingular elliptic curves.} 
if the corresponding formal $\calo_{B_v}$-module $X_k$ is isomorphic to $Y_k\oplus Y_k$ where $Y_k$  is as above. This isomorphism is unique up to $\GL_2(\calo_{B_v})$-conjugation.
\\Since $\End(X_k)\simeq M_2(\calo_{B_v})$, the quaternionic action on $X_k$ is given by
\[
\iota\colon\calo_{B_v}\hookrightarrow M_2(\calo_{B_v}).
\]

We also remark that $M_2(\calo_{B_v})\simeq M_2(\calo_{F_v})\otimes_{\calo_{F_v}}\calo_{B_v}$.

\begin{lem}\label{2.7ss}
	We have that
\begin{enumerate}
	\item the set of superspecial points up to isomorphism is in bijection with $B_v^\times$-conjugacy classes of $\iota$;
	\item superspecial points corresponding to the class of a fixed $\iota$ are in bijection with 
	\[
      G'(\Q)_0\backslash G'(\A_f^v)/ U^v
	\]
	
	where $G'(\Q)_0$ are the element in the centralizer of $\iota(\calo_{B_v})$.
	
\end{enumerate}
\end{lem}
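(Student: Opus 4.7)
The plan is to combine the Drinfeld moduli interpretation of $\widehat{\Omega}$ from Section \ref{moduli} with the Cerednik-Drinfeld uniformization (\ref{cdu}): the first assertion is essentially local and follows from the structure of $\End(Y_k \oplus Y_k)$, while the second passes from this local datum to the global adelic description by the same mechanism used for the special CM points in Lemma \ref{sCM}.

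For (1), I would begin from the definition $X_k \simeq Y_k \oplus Y_k$, together with the identification $\End(Y_k) \simeq \calo_{B_v}$, which yields $\End(Y_k \oplus Y_k) \simeq M_2(\calo_{B_v})$. The $\calo_{B_v}$-module structure on $X_k$ is then encoded by the embedding $\iota\colon\calo_{B_v}\hookrightarrow M_2(\calo_{B_v})$. An isomorphism between superspecial data $(X_k,\iota_1)$ and $(X_k,\iota_2)$ is an element of $\Aut(X_k) = \GL_2(\calo_{B_v})$ conjugating $\iota_1$ into $\iota_2$, and the identification $X_k \simeq Y_k \oplus Y_k$ is itself only well-defined up to such an automorphism. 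The action of $B_v^\times$, viewed inside $M_2(\calo_{B_v})^\times$ via its natural diagonal/component-wise action on $Y_k \oplus Y_k$, accounts for precisely these conjugations, yielding the first bijection.

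For (2), I would apply Cerednik-Drinfeld uniformization (\ref{cdu}), so that a geometric point of $\widehat{\cals}_U$ is represented by a triple in $\widehat{\Omega} \widehat{\otimes} \breve{\calo}_{F_v} \times \Z \times G'(\A_f^v)/U^v$ modulo the diagonal $G(\Q)$-action. By Lemma \ref{dualgraph} and the description in Section \ref{dualgr}, superspecial points of $\widehat{\Omega}$ correspond to vertices of the Bruhat-Tits tree $\calt$, which form a single transitive $G'(F_v) = \GL_2(F_v)$-orbit; hence the superspecial locus of $\widehat{\cals}_U$ is covered by the $G(\Q)$-orbit of any chosen vertex times the finite adelic factor. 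Fixing an isomorphism class of $\iota$ as in (1) pins down such a vertex, and the stabilizer of the resulting configuration inside $G(\Q)$, acting on $\widehat{\Omega}$ through its image in $G'(F_v)$, coincides with the centralizer of $\iota(\calo_{B_v})$, i.e. $G'(\Q)_0$ in the statement. Combining this stabilizer computation with the identification $G(\A_f^v) \simeq G'(\A_f^v)$ then produces the double quotient $G'(\Q)_0 \backslash G'(\A_f^v)/U^v$, exactly as in the parallel arguments for Lemmas \ref{CM} and \ref{sCM}.

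The main technical obstacle lies in making precise the interplay between the $G(\Q)$-action on $\widehat{\Omega}$ (via its embedding into $G'(F_v)$ coming from the identification of $B$ and $B'$ away from $v$ and $\tau$) and the vertex stabilizer in $\calt$, so that the stabilizer is unambiguously identified with the centralizer $G'(\Q)_0$; this is the step where one must carefully match the local moduli-theoretic data with the global adelic structure provided by Cerednik-Drinfeld.
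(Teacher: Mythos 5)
Your proposal has to stand on its own here, since the paper does not argue this lemma internally but simply cites \cite[Lemma 5.4.5, p.261]{sz}; as written, it has two genuine gaps in part (2). First, you identify the superspecial points of $\widehat{\Omega}_{\bar{k}}$ with the \emph{vertices} of the Bruhat--Tits tree $\calt$. That is the wrong combinatorial object: by Lemma \ref{dualgraph} and the remark following it, vertices correspond to the irreducible components of the special fiber, while the superspecial points are the \emph{singular} (double) points, which correspond to the \emph{edges} of $\calt$. Running your orbit--stabilizer computation on a vertex is precisely the computation of Lemma \ref{2.11}, and it produces the double coset $G'(\Q)_e\backslash G'(\A_f^v)/\GL_2(\calo_{K_v})U^v$ for irreducible components, not the asserted $G'(\Q)_0\backslash G'(\A_f^v)/U^v$. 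Second, the step that carries the actual content of (2) --- that fixing the class of $\iota$ singles out one orbit and that the stabilizer of a point in it is exactly the centralizer $G'(\Q)_0$ of $\iota(\calo_{B_v})$ --- is only asserted (``pins down such a vertex'', ``coincides with the centralizer''), and you yourself defer it as the ``main technical obstacle''. That identification \emph{is} the proof, not a detail. The natural way to obtain it is through the moduli description rather than the tree: the superspecial points of the tower with fixed $\iota$ form a single orbit under prime-to-$v$ quasi-isogenies and level structures, i.e.\ under $G'(\A_f^v)$, and the stabilizer consists of the rational self-quasi-isogenies of $Y_k\oplus Y_k$ commuting with the $\calo_{B_v}$-action given by $\iota$, which is where the centralizer condition defining $G'(\Q)_0$ enters. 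A purely tree-theoretic route would in addition have to relate edge stabilizers to such centralizers, which you do not do.

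Part (1) has a related problem. What an isomorphism of superspecial data gives you directly is conjugacy of $\iota$ under $\Aut(X_k)=\GL_2(\calo_{B_v})$ (this is the content of the bijection (\ref{rbim}) coming from Ribet), whereas the statement is about $B_v^\times$-conjugacy. Your claim that the diagonal copy of $B_v^\times$ inside $\GL_2(\calo_{B_v})$ ``accounts for precisely these conjugations'' is unjustified: the diagonal image is far smaller than $\GL_2(\calo_{B_v})$, so conjugation by it does not exhaust the isomorphisms, and in the other direction you never specify how $B_v^\times$ is supposed to act on embeddings $\calo_{B_v}\hookrightarrow M_2(\calo_{B_v})$ so as to recover the statement. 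Bridging integral $\GL_2(\calo_{B_v})$-conjugacy with the $B_v^\times$-conjugacy of the lemma requires an actual argument (for instance Noether--Skolem over $B_v$ together with an analysis of how the integral conjugacy classes sit inside the single rational one, or a direct appeal to the classification in \cite{rib}); this bridge is missing from your sketch.
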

\begin{proof}
	See \cite[p.261, Lemma 5.4.5]{sz}.
\end{proof}

\subsection{Connected and Irreducible Components}
Let $Z$ denote the center of $G$, i.e., $Z=\text{Res}_{F/\Q}(F^\times)$.
By strong approximation, the set of connected components $Z_U$ of the Shimura curve defined by (\ref{shi}) is given by 
\[
Z^{an}_U=G(\Q)\backslash\{\pm 1\} \times G(\A_f)/ U\simeq G(\Q)_+\backslash G(\A_f)/ U
\]

and this is isomorphic, via determinant, to
\[
Z(\Q)\backslash Z(\A_f)/\det U.
\]

\begin{lem}\label{2.11}
	The set $\calz_{U,k}$ of irreducible components of the special fiber of $\widehat{\cals}_{k_v}$ is in bijection with
	
	\begin{equation}\label{irredcomp}
		G'(\Q)_e\backslash 
		G'(\A^v_f)/\GL_2(\calo_{K_v})U^v
	\end{equation}

where the subscript ``e" indicates the elements with even order at $v$.
	 
\end{lem}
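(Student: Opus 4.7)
The plan is to combine the Cerednik--Drinfeld uniformization (\ref{cdu}) with Lemma \ref{dualgraph}, following the same strategy used for the other adelic descriptions in the previous lemmas of this section. First I would take the second isomorphism of (\ref{cdu}), reduce modulo the uniformizer, and pass to the set of irreducible components of the special fibre on both sides. The only factor whose special fibre is not already zero-dimensional is $\widehat{\Omega}^{\bullet}\widehat{\otimes}\breve{\calo}_{F_v}$, and Lemma \ref{dualgraph} identifies the set of irreducible components of its special fibre with the vertex set $\mathrm{Vert}(\calt)$ of the Bruhat--Tits tree. This yields
\[
\calz_{U,k}\simeq G(\Q)\backslash\mathrm{Vert}(\calt)\times\Z\times G'(\A_f^v)/U^v,
\]
where $G(\Q)$ acts on the $v$-factor through its embedding into $G'(F_v)\simeq\GL_2(F_v)$ coming from the Cerednik--Drinfeld construction.

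Next I would realise $\mathrm{Vert}(\calt)$ as an explicit homogeneous $\GL_2(F_v)$-space via the classical identification $\mathrm{Vert}(\calt)\simeq\GL_2(F_v)/F_v^\times\GL_2(\calo_{F_v})$, and absorb the $\Z$ factor---which records the Frobenius twist on $\breve{F}_v$ via the valuation of the determinant---into the central $F_v^\times$ appearing in the vertex stabiliser. This collapses $\mathrm{Vert}(\calt)\times\Z$ into a homogeneous $\GL_2(F_v)$-space with stabiliser $\GL_2(\calo_{F_v})\cdot\calo_{F_v}^\times$. A strong-approximation argument for $G'$ (split at $v$ and anisotropic at $\tau$) then repackages the combined quotient as a single adelic double coset over $G'(\A_f)$.

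The main obstacle is to justify the two deviations from the naive answer $G'(\Q)\backslash G'(\A_f)/\GL_2(\calo_{F_v})U^v$: the subscript $e$, and the replacement of $\calo_{F_v}$ by $\calo_{K_v}$. The first arises because $\calt$ is bipartite: an element of $G'(\Q)$ whose reduced norm has odd valuation at $v$ swaps the two colours of vertices, so only the subgroup $G'(\Q)_e$ preserves a fixed colour---and each colour corresponds to a distinct irreducible component. The second reflects the base change to $\breve{\calo}_{K_v}$ that enters the definition of $\widehat{\cals}'$: in the ramified case, the resolution of double points described in Section \ref{dualgr} enlarges the relevant vertex stabiliser from $\GL_2(\calo_{F_v})$ to $\GL_2(\calo_{K_v})$, while in the inert case $\breve{K}_v=\breve{F}_v$ and no such enlargement is necessary. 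I would make both identifications rigorous by carefully tracking the $G'(F_v)$-action on the pair $(\mathrm{Vert}(\calt),\Z)$, following the analogous computations in \cite[Section 5]{sz2} and the moduli-theoretic discussion in \cite{bc}.
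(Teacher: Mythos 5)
Your overall route is the paper's: Cerednik--Drinfeld uniformization \eqref{cdu}, Lemma \ref{dualgraph} to identify the components of the special fibre of $\widehat{\Omega}$ with the vertices of $\calt$, a homogeneous-space description of the vertex set, absorption of the $\Z$-factor, and a strong-approximation rewriting into a single adelic double coset; your bipartiteness explanation of the subscript ``e'' is the same parity phenomenon that the paper extracts by observing that the central factor in the vertex stabiliser has even determinant valuation at $v$.

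The genuine gap is in your account of where $\GL_2(\calo_{K_v})$ comes from. You first get $\mathrm{Vert}(\calt)\simeq\GL_2(F_v)/F_v^\times\GL_2(\calo_{F_v})$ and then assert that in the ramified case the resolution of double points ``enlarges the vertex stabiliser to $\GL_2(\calo_{K_v})$'', while in the inert case no enlargement is needed. Both halves conflict with the statement and with the paper's proof: the level $\GL_2(\calo_{K_v})$ appears in the inert case too, and the paper produces it there directly by parametrizing the components by homothety classes of $\calo_{K_v}$-lattices, i.e.\ by $\PGL_2(K_v)/\PGL_2(\calo_{K_v})=\GL_2(K_v)/K_v^\times\GL_2(\calo_{K_v})$, the central $K_v^\times$ being exactly what forces the even-order condition after applying $G'(\A_f)\simeq\GL_2(K_v)\cdot G(\A_f^v)$. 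In the ramified case the extra components are the exceptional divisors, indexed by the double points, i.e.\ by the edges of $\calt$, and the paper handles them as a separate homogeneous space $\GL_2(F_v)/S_v$ with $S_v$ generated by $F_v^\times$, $\Gamma_0(v)$ and the element $\begin{bmatrix} & 1\\ \varpi_{F_v} & \end{bmatrix}$; this edge stabiliser is not $\GL_2(\calo_{K_v})$ (which is not even a subgroup of $\GL_2(F_v)$), so the claimed ``enlargement of the stabiliser'' is unjustified and would fail as stated. To complete your argument you would need either the paper's $\calo_{K_v}$-lattice parametrization at $v$, or an explicit accounting of the edge/exceptional components in the ramified case; your sketch supplies neither, and deferring both to ``carefully tracking the action'' is precisely the content of the lemma.
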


\begin{proof}
	By the description of the special fiber of $\widehat{\Omega}^\bullet$, it is convenient to separate the two cases. Note that both require an essential use of Cerednik-Drinfeld uniformization as in (\ref{cdu}). Let us first treat the unramified situation.
	
	For the special fibers of $\widehat{\Omega}$, Lemma \ref{dualgraph} show that their irreducible components are in bijection with vertices of $\calt$, i.e., homothety classes of $\calo_{K_v}$-lattices in $K_v^2$. Thus, since the vertices $\calv$ are in bijection with $\PGL_2(K_v)/\PGL_2(\calo_{K_v})$, we have that $\calz_p$ is indexed by
	\[
	G'(\Q)\backslash (\PGL_2(K_v)/\PGL_2(\calo_{K_v})) \times \Z \times G(\A^v_f)/U^v.
	\]
	One can immediately rewrite this double quotient as
	$G'(\Q)\backslash (\GL_2(K_v)/K_v^\times\GL_2(\calo_{K_v})) \times \Z \times G(\A^v_f)/U^v$.
	\\Moreover, since we have $G'(\A_f)\simeq \GL_2(K_v)\cdot G(\A^v_f)$, we obtain that $\calz_v$ is isomorphic to 
	\[
	G'(\Q)\backslash G'(\A_f)/K_v^\times\GL_2(\calo_{K_v})U^v.
	\]
	Finally, to conclude it is enough to note that the order at $v$ of 
	$\det\begin{bmatrix}
		z_v & 
		\\
		 & z_v
	\end{bmatrix}$ 
is even for $z_v\in K_v$.

On the other hand, in the case of $\widehat{\Omega}'$, we need to take into account also the set $\calz_{U,k}'$ parametrizing the exceptional divisors, which arise from the double points, indeed corresponding to a pair of adjacent lattices in $F_v^2$. Since the $\GL_2(F_v)$-action on double points is transitive, we have 
\[
\calz_{U,k}'\simeq \GL_2(F_v)/ S_v
\]

where $S_v$ is the stabilizer of any double point. 
\end{proof} 

\begin{rmk}
	As described in \cite[p.243]{yzz}, the grous $S_v$ is generated by $F_v^\times$, the element 
$\begin{bmatrix}
	& 1
	\\
	\varpi_{F_v}&
\end{bmatrix}$
and the arithmetic subgroup $\Gamma_0(v)$.
\end{rmk}

We conlude by noting that, in terms of linear data, the set of irreducible components is in bijections with (isomorphism classes of) $v$-admissible $(\calo_{B_v},\calo_{B_v})$-bimodules of type $(2,0)$ and $(0,2)$.  Equivalently, in the moduli interpretation, they correspond to those formal modules $X$ which are not special, i.e., those with $\Lie(X)_i$  of $\rk$-$2$ and $\Lie(X)_{i+1}$ of $\rk$-$0$ for $i\in \Z/2\Z$.

\subsubsection{Tower of Local Models}
The main reason to introduce a tower of Shimura curves is to have an adelic action on it.

Let us assume again that the level structures $U$ (and so $U^v$) are sufficiently small.
\\The proper and regular models $\{\cals_U\}_U$ form a projective system with finite flat transition maps, whose inverse limit $\cals$ has a right action of $G(\A_f^v)$ (see \cite[Thm.5.2, p.140]{bc}). 
\\The same construction gives towers of CM points, and special CM points, and of irreducible components, which we denote respectively by $\CM,\SCM$, and $\calz$.

\subsection{Reduction Maps}
We introduce two classes of reduction maps, namely, one from CM points and another one from the connected components.
\\
Let us consider now $k$ to be the residue field modulo a $\mathit{ramified}$ prime. Beside the first vertical injection, the other vertical arrows of the right side of the following diagram is given by natural projections
\begin{equation}\label{natproj}
\xymatrix{
&\scal^{\text{SCM}}_U\ar@{^{(}->}[d]\ar[r]^-{\simeq}\ar@/{}^{-3pc}/[dddd] & T(\Q)_0\backslash G(\A^v_f)/U^v\ar[d]
\\	
&\scal^{\text{CM}}_U \ar[r]^-{\simeq}\ar@{->>}[d] & T(\Q)\backslash G(\A_f)/U\ar[d]
\\
&\cals_{U,k}^{ss}\ar[r]^-{\simeq}\ar@{->>}[d] 
& G'(\Q)_0\backslash G'(\A_f^v)/U^v \ar[d]
\\
&\calz_{U,k} \ar@{.>}[r]^-{\simeq} & G'(\Q)_e\backslash G'(\A_f^v)/\GL_2(\calo_{K_v})U^v &
\\
&\calz_U \ar[r]^-{\simeq}\ar@{->>}[u] & G(\Q)_+\backslash G(\A_f)/U \ar[u]&
}
\end{equation}

where the isomorphisms are proved in Lemma \ref{CM}, Lemma \ref{sCM}, Lemma \ref{2.7ss} and Lemma \ref{2.11} respectively.

A similar diagram holds for the respective towers; we do not rewrite it, since we just notice how it is enough to remove the various level structure and to replace, on the right hand side, the left quotients by their closure. However there is a caveat: this does not work for the irreducible components $\calz_{U,k}$, where it is essential to keep the maximal level at $v$. To the best of our knowledge the construction of good local integral model if considered different level structures is still an open problem.

Denote by $\bar{v}$ a place of $K^{\text{ab}}$ above $v$ with ring of integers $\calo_{\bar{v}}$ and residue field $k_{\bar{v}}$.
\\Since CM points are defined over $K^{\text{ab}}$ and $\cals_U$ is proper over $\Spec \calo_{F_v}$, we have the following maps
\[
\scal_U(K^{\text{ab}})= \cals_U(K^{\text{ab}})\simeq\cals_U(\calo_{\bar{v}})\longrightarrow\cals_U(k_{\bar{v}})
\]
where the bijection follows from the valuative criterion for properness. We thus obtain a $v$-reduction map
\[
\scal^{\text{SCM}}_U\hooklongrightarrow\scal_U^{\text{CM}}\longrightarrow\cals_U(k_{\bar{v}}).
\]

Let $\text{\card}\in\{\text{ss},
 \text{ns}\}$ be a superscript for the reduction of $\cals$, where the first one denotes the superspecial (i.e., singular) locus, and the second one the non-singular locus, that is, $\cals^{ns}$ denotes the irreducible components.
 
To define our reduction maps, we introduce, following \cite[p.38]{cv}, the auxiliary space
\begin{equation}
     \calx^{\text{\card}}_U=\cals^{\text{\card}}_{U,k}\times_{\calz_{U,k}}\calz_U
\end{equation}

so that, by the universal property of the fiber product and the commutativity given by (\ref{natproj}), we obtain the diagram 

\[
	\xymatrix{
	\SCM \ar@/_/[ddr]\ar@/^/[drr]\ar@{.>}[dr]
	\\
	&\cals^{\text{\card}}_{U,k}\times_{\calz_{U,k}}\calz_U\ar[d]\ar[r] & \cals_{U,k}\ar[d]
	\\
	&\calz_U\ar[r] &\calz_{U,k}
	}
\]

whose dotted arrow is unique.
\\
This leads respectively to $\mathit{reduction}$ and $\mathit{irreducible}$ $\mathit{component}$ maps so defined

\begin{align*}
	\SCM&\xrightarrow{red_v} \cals^{\text{\card}}_{U,k}\times_{\calz_{U,k}}\calz_{U}\xrightarrow{pr_2} \calz_U \xrightarrow{c_v} \calz_{U,k}
	\\
	x&\longmapsto
	(red_v(x),c(x)) \longmapsto c(x)\longmapsto
	c_v(x)
\end{align*}

where $pr_2$ is the projection on the second factor.

The resulting composite map $c=pr_2\circ red_v$ commutes with the action of $G_K^{\text{ab}}$ from both sides. 

\subsubsection{Reduction of CM Points.}
In this section we study in detail the reduction of CM points by $v$ both when it ramifies in $K$ and when it remains inert in $K$.

%\begin{lem}\label{closed}
%	Let $\ccal$ be a semi-stable proper curve of genus $g\ge 1$ over $k$ with smooth locus denoted by $\ccal^{\text{sm}}$. Let $x_0\in\ccal(k)$ and let $\Pic(\ccal)$ denote the identity component of the Picard scheme. Then the map
%    \[
%    \ccal^{\text{sm}}\hooklongrightarrow \Pic(\ccal),\;\;z\mapsto [\calo_{\ccal}(z-x_0)]
%    \]
%    is a closed embedding.
%\end{lem}
%\begin{proof}
%	Let $\widetilde{\ccal}\rightarrow \ccal$ be the normalization of $\ccal$. By functoriality we obtain the map $\Pic(\ccal)\rightarrow \Pic(\widetilde{\ccal})$ which is surjective  by \cite[Cor.11 p.250]{blr}.
%	Since $k$ is perfect, we have that $\widetilde{\ccal}$ is smooth  (see \cite[p.244]{blr}).  The smoothness allows us to  conclude since the considered map is a (categorical) monomorphism between proper varieties.% Around the ordinary double points the map fails to be defined, since as singularities they do not belong to the smooth locus. 
%\end{proof}

\begin{prop}\label{neron}
	Let $\ccal$ be a generically smooth, geometrically reduced, regular, proper curve of genus $g\ge 1$ over a discrete valuation ring $\calo$, and let $\calc$ be its generic fiber over $E=Frac(\calo)$. Assume moreover that $\ccal$ has semi-stable reduction.
	Then any point $y\in\calc(E)$ reduces to the smooth locus over the residue field of an \'etale extension $\calo'/\calo$.
\end{prop}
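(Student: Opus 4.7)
\begin{skprf}
The plan is to use the valuative criterion of properness to extend $y$ to a section of $\ccal/\calo$, and then exploit regularity together with semistability to show that the section automatically lands in the smooth locus.

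First I apply the valuative criterion: since $\ccal$ is proper over $\calo$ and $\calo$ is a DVR with fraction field $E$, the morphism $y\colon\Spec E\to\calc$ extends uniquely to a section $\bar y\colon\Spec\calo\to\ccal$. Writing $s$ for the closed point of $\Spec\calo$ and $z=\bar y(s)\in\ccal_k$, the task is to show $z\in\ccal^{sm}$, possibly after an \'etale base change $\calo'/\calo$.

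Next I analyze the local structure at a hypothetical singular $z$. By semistability the only singularities of the special fiber are ordinary double points, so, after possibly passing to the strict henselisation of $\calo$ (a filtered colimit of \'etale extensions), the completed local ring $\widehat{\calo}_{\ccal,z}$ admits a presentation $\widehat{\calo}[[x,y]]/(xy-u\pi^n)$ with $u$ a unit, $\pi$ a uniformizer, and some $n\ge 1$. The crucial observation is that regularity of $\ccal$ forces $n=1$: if $n\ge 2$ the defining relation $xy-u\pi^n$ lies in the square of the maximal ideal $(x,y,\pi)$ and so cannot cut the cotangent space down to dimension $2$. Absorbing the unit $u$ into $y$, the local equation becomes the standard nodal form $xy=\pi$.

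Finally I derive a contradiction by a valuation computation. The section $\bar y$ (over $\calo'$) yields a local $\calo'$-algebra homomorphism $\widehat{\calo}_{\ccal,z}\to\widehat{\calo'}$ sending $x\mapsto a$, $y\mapsto b$ with $ab=\pi$. For the closed point to map to the node, both $a$ and $b$ must lie in the maximal ideal, so $v(a),v(b)\ge 1$ and hence $v(ab)\ge 2>1=v(\pi)$, absurd. Therefore $z$ already lies in $\ccal^{sm}$, as claimed. The main technical point---and essentially the only obstacle beyond the valuative criterion---is the regularity argument that pins the exponent to $n=1$; the \'etale extension in the statement affords precisely the flexibility needed to put the node into the standard algebraic form $xy=\pi$, after which the argument collapses to the two-line valuation check above.
\end{skprf}
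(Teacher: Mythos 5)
Your proof is correct, but it takes a genuinely different route from the paper. The paper proves the statement through the N\'eron model of the Jacobian: it fixes a rational base point, embeds $\calc$ into $\Pic(\calc)$ by the Abel--Jacobi map (this is where the hypothesis $g\ge 1$ and the reference \cite{ros} for the closed-immersion property are used), invokes the identification of the N\'eron model of $\Pic(\calc)$ with the Picard scheme of the regular semistable model $\ccal$ together with the N\'eron mapping property from \cite{blr}, and finally uses the behaviour of N\'eron models under \'etale base change. You instead argue directly on the model: the valuative criterion extends $y$ to a section of $\ccal$, and the classical local computation at a node --- regularity forces the completed local ring (after strict henselisation, which splits the branches) to be $\widehat{\calo'}[[x,y]]/(xy-u\pi)$ with exponent exactly $1$, after which a section through the node would give $v(a)+v(b)\ge 2=2v(\pi)$, impossible since the extension is unramified --- shows the section avoids the non-smooth points. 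This is essentially the standard lemma that sections of a regular fibered surface factor through the smooth locus; it is more elementary and more general than the paper's argument, needing neither $g\ge 1$, nor a rational base point, nor any N\'eron-model machinery, whereas the paper's proof has the advantage of sitting inside the N\'eron/Picard formalism it uses elsewhere. Two small polish points: you should note that $c\neq 0$ in the presentation $xy-c$ (this already follows from regularity, or from generic smoothness), and you could shorten the argument further by observing that the image $z$ of the section is a rational point at which the ideal of the section is generated by part of a regular system of parameters, so the special fiber is regular at $z$, and regular plus rational implies smooth --- this bypasses the nodal normal form entirely.
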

\begin{proof}
	Fix a $E$-rational point $b$ on $\calc$. Consider the map 
	\[
	\calc\hooklongrightarrow \Pic(\calc),\;\;y\mapsto [\calo_{\calc}(y-b)]
	\]
	where $\Pic(\calc)$ is defined over $E$. Denote by $\ncal(\Pic(\calc))$ the N\'eron model of $\Pic(\calc)$, defined over $\calo$. We thus obtain the following commutative diagram
	\[
\begin{tikzcd}
	\calc \arrow[hookrightarrow]{r} \arrow[hookrightarrow]{dd}
	& \Pic(\calc) \arrow[hookrightarrow]{d} 
	\\
	& \Pic(\ccal)\ar[d, "\simeq"]
	\\
	\ccal^{\text{sm}} \arrow[hookrightarrow, dashed]{r}
	& \ncal(\Pic(\calc))%\ar[r, "\simeq"]&\Pic(\ccal)
\end{tikzcd}
	\]
where the dashed map exists and it is unique (up to isomorphism) by the N\'eron mapping property \cite[p.12]{blr}. Moreover it is a closed embedding by \cite[Thm 1.5]{ros}. 
The isomorphism of $\calo$-schemes is proved in \cite[Thm 4 p.267]{blr}. 
\\Therefore a point $y\in\calc(E)$ extends uniquely to a point $\underline{y}\in\ncal(\Pic(\calc))(\calo)$, i.e., $\underline{y}_E=y$. We have thus obtained a point $\underline{y}$ that generically is in the image of $\ccal^{\text{sm}}$. 
\\By \cite[Prop.2, p.13]{blr} we also have that the map
\[
\ncal(\Pic(\calc))\otimes_\calo\calo'\longrightarrow\ncal(\Pic(\calc)\otimes_\calo\calo')
\]
is an isomorphism  only if  $\calo'/\calo$ is \'etale.  Since the reduction to the special fiber is obtained by scalar restriction
%, and an $\calo$-point can be only generic or special, 
we conclude.
\end{proof}

\begin{rmk}
Here we collect few technical comments on Proposition \ref{neron}.
	\begin{enumerate}
		\item It is not necessary to assume the existence of the $E$-rational point $b$. If such a point does not exists then the result remains (vacously) true taking $y$ as a base point;
		
		\item the hypothesis that $\ccal$ has semi-stable reduction is not restrictive. By the (strong form of the) semistable reduction theorem \cite[Thm 3.5]{con2} $\ccal$ has semi-stable reduction upon passage to an \'etale extension $\calo'/\calo$. In fact one may pass to any cover over which all of the $p$-torsion of $\Pic(\calc)$ is defined, where $p$ is an odd prime not equal to the residue characteristic of $\calo$. 
	\end{enumerate}
\end{rmk}

\begin{rmk}
	Proposition \ref{neron} hopefully explains the geometric meaning of the sentence in \cite[pp.242]{yzz} about the reduction of CM points at an inert prime.
\end{rmk}

In our context, Proposition \ref{neron} was motivated by Proposition \ref{vram}. Nonetheless, we also need an ad hoc proof based on the modular interpretation of our points.

\begin{lem}\label{2.17}
	Let $v$ a place of $F$ which ramifies in $K$. %The following hold:
	
	\begin{enumerate}
	\item The $(\calo_{B_v},\calo_{B_v})$-bimodule associated to the $v$-reduction of a CM point is $v$-admissible of type $(1,1)$;
	
	\item  $v$-admissible $(\calo_{B_v},\calo_{B_v})$-bimodules of type $(1,1)$ and $\rk$-$8$ correspond to special formal $\calo_{B_v}$-modules of height $4$ which are superspecial over $k$, up to respective isomorphisms.
	\end{enumerate}
\end{lem}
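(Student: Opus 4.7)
The approach combines the Cartier--Dieudonn\'e moduli interpretation of $\widehat{\Omega}$ from Section \ref{moduli} with the Ribet bimodule classification from Section \ref{lRb}, using the bijection (\ref{rbim}) as the bridge between the two.

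For part (1), the plan is first to invoke the superspecial analysis to identify the reduction $X_k$ of a CM point with $Y_k \oplus Y_k$, so that $\End(X_k) \simeq M_2(\calo_{B_v})$ and the quaternionic action is encoded by an embedding $\iota \colon \calo_{B_v} \hookrightarrow M_2(\calo_{B_v})$; by (\ref{rbim}) this produces a rank-$8$ $(\calo_{B_v},\calo_{B_v})$-bimodule $\calm$. The CM by $\calo_{K_v}$ supplies a commuting embedding $\tau \colon \calo_{K_v} \hookrightarrow M_2(\calo_{B_v})$ landing inside the commutant of $\iota(\calo_{B_v})$. The $v$-admissibility $\iota(\gotb) \subseteq M_2(\gotb)$ would then be extracted from this commutation: since $v$ ramifies in $K$, the uniformizer $\varpi_{K_v}$ generates the maximal ideal of $\calo_{K_v}$ and satisfies $\varpi_{K_v}^2 = u \cdot \varpi_{F_v}$ for a unit $u$, while $\Pi$ satisfies $\Pi^2 = \varpi_{F_v}$; combining these relations with the compatibility of the $\Z/2\Z$-gradings induced by $\calo_{L_v}$ forces the required containment. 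Finally, type $(1,1)$ would follow from the specialness of the Dieudonn\'e module $D$ of $X$: both $D_0/VD_1$ and $D_1/VD_0$ have rank $1$, which via (\ref{Dieudonne}) and the bimodule decomposition $\calm \simeq \calo_{B_v}^{r_v} \oplus \gotb^{s_v}$ (with $r_v + s_v = 2$) excludes the cases $(2,0)$ and $(0,2)$, as these correspond to non-special formal modules (parametrizing the irreducible components, as noted at the end of Section \ref{dualgr}).

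Part (2) is obtained by reversing this chain of identifications: a $v$-admissible rank-$8$ bimodule of type $(1,1)$ corresponds via (\ref{rbim}) to an embedding $\iota \colon \calo_{B_v} \hookrightarrow M_2(\calo_{B_v})$ with $\iota(\gotb) \subseteq M_2(\gotb)$, and Lemma \ref{2.7ss}(1) identifies such embeddings (up to $B_v^\times$-conjugacy) with superspecial formal modules $X_k \simeq Y_k \oplus Y_k$ of height $4$; the type-$(1,1)$ condition corresponds to specialness of $\Lie(X_k)$, since types $(2,0)$ and $(0,2)$ would force the entire tangent space into a single graded piece.

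The main obstacle is the verification of $v$-admissibility in part (1): although it is formally a containment statement in $M_2(\calo_{B_v})$, establishing it requires carefully tracking the commutation between $\tau(\varpi_{K_v})$ and $\iota(\Pi)$ through the Dieudonn\'e module and leveraging both the ramification of $K$ and of $B$ at $v$.
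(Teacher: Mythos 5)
Your part (2) is essentially the paper's argument: via the bijection (\ref{rbim}) a rank-$8$ bimodule gives an embedding $\calo_{B_v}\hookrightarrow M_2(\calo_{B_v})$, i.e.\ a quaternionic action on a superspecial point as in Lemma \ref{2.7ss}, and the type-$(1,1)$/special dictionary is what the paper quotes from \cite[p.39]{rib} and you rederive from the grading of the tangent space. Part (1), however, has a genuine gap. It is circular in the context of the paper: you begin by writing $X_k\simeq Y_k\oplus Y_k$, so that $\End(X_k)\simeq M_2(\calo_{B_v})$ and the quaternionic action yields $\iota$. But superspeciality of the reduction of a CM point at a ramified $v$ is exactly what Lemma \ref{2.17} is used to establish in Proposition \ref{vram}; a priori one only knows that every point of the special fiber is supersingular, i.e.\ isogenous, not isomorphic, to $Y_k\oplus Y_k$, and for a non-superspecial point $\End(X_k)$ is not $M_2(\calo_{B_v})$. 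A conditional version of (1) (``if the reduction is superspecial, then\dots'') would not feed into Proposition \ref{vram}. The paper instead constructs the bimodule directly from the CM structure: by \cite[p.20]{rib} it is $\calo_{B_v}\otimes_{\calo_{K_v}}\calo_{B_v}$, of rank $8$ because $\calo_{B_v}$ is locally free of rank $2$ over $\calo_{K_v}$ (this is where ramification of $v$ in $K$ enters), and $v$-admissibility together with type $(1,1)$ is then obtained as in \cite[Thm.2.4]{rib}; with this presentation $\gotb\calm=\calm\gotb$ is immediate, since $\varpi_{K_v}$ uniformizes $\calo_{B_v}$, so $\gotb=\varpi_{K_v}\calo_{B_v}=\calo_{B_v}\varpi_{K_v}$.

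Moreover, the step you yourself flag as the main obstacle is missing rather than merely delicate: the containment $\iota(\gotb)\subseteq M_2(\gotb)$ does not follow from the relations $\Pi^2=\varpi_{F_v}$, $\varpi_{K_v}^2=u\varpi_{F_v}$ and the bare existence of a commuting copy of $\calo_{K_v}$. The same commutation data occurs in the type-$(2,0)$ configuration: the commutant of the diagonal embedding $b\mapsto\mathrm{diag}(b,b)$ is $M_2(\calo_{F_v})$, which contains the ramified order $\calo_{K_v}$ via the companion matrix of the Eisenstein minimal polynomial of $\varpi_{K_v}$. So any correct argument must use how the CM action interacts with the Dieudonn\'e module (equivalently, the explicit tensor description above), not only that it commutes with the quaternionic action; this is precisely what your sketch leaves unverified. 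Relatedly, your exclusion of types $(2,0)$ and $(0,2)$ by specialness appeals to the decomposition $\calm\simeq\calo_{B_v}^{r_v}\oplus\gotb^{s_v}$, which the paper states only for $v$-admissible bimodules, so the unproved admissibility is load-bearing for your type argument as well.
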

\begin{proof}
	 By \cite[p.20]{rib} this bimodule is isomorphic to $\calo_{B_v}\otimes_{\calo_{K_v}}\calo_{B_v}$.  Note that its rank is $8$ over $\calo_{F_v}$ since $\calo_{B_v}$ is locally free of $\rk$-$2$ over $\calo_{K_v}$. The proof that it is $v$-admissible of type $(1,1)$ follows almost verbatim the proof of \cite[Thm.2.4]{rib}.
	 
	 As showed in Section \ref{lRb} by the bijection (\ref{rbim}), to every $(\calo_{B_v},\calo_{B_v})$-bimodule of $\rk$-$8$ we can associate a morphism $f\colon\calo_{B_v}\hooklongrightarrow M_2(\calo_{B_v})$, which corresponds to the quaternionic action on a superspecial point. By \cite[p.39]{rib}, a bimodule of type $(1,1)$ corresponds to a special\footnote{We caution the reader that Ribet used the term $\mathit{mixed}$ in \cite{rib}.} formal module.
	 \end{proof}

\begin{prop}\label{vram}
	 The reduction of  CM points in the special fiber modulo $v$ corresponds to superspecial points if and only if $v$ ramifies in $K$. In symbols,
	\[
	red_v(\CM)\subseteq
	\begin{cases} 
		&\cals^{\text{ss}}_{k},\; \text{for}\;\; $v$\;\; \text{ramified in}\;\;$K$\\ 
		&\cals^{\text{ns}}_{k},\;\;\text{for}\;\;$v$\;\;\text{inert in}\;\;$K$.
	\end{cases}
	\]
\end{prop}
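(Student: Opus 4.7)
The plan is to handle the two cases separately. Since, under the hypotheses of Section \ref{rm}, $v$ is either ramified or inert in $K$, the two displayed inclusions together deliver the asserted equivalence, and there is nothing else to check.

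For the ramified case I would apply Lemma \ref{2.17} directly. For a CM point $x\in\CM$ with associated formal $\calo_{B_v}$-module $X$, the CM structure equips $X$ with an $\calo_{K_v}$-action commuting with that of $\calo_{B_v}$. Passing through the Dieudonn\'e equivalence (\ref{Dieudonne}) this produces a Ribet bimodule of rank $8$ over $\calo_{F_v}$ in the sense of Section \ref{lRb}. Lemma \ref{2.17}(1) identifies this bimodule as $v$-admissible of type $(1,1)$, and Lemma \ref{2.17}(2) then translates the type-$(1,1)$ condition back to the statement that the corresponding formal $\calo_{B_v}$-module is superspecial. Hence $red_v(x)\in\cals^{\mathrm{ss}}_k$.

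For the inert case I would invoke Proposition \ref{neron}. The model $\cals_U/\calo_{F_v}$ is proper, regular and has semi-stable reduction, as follows from the Cerednik--Drinfeld uniformization (\ref{cdu}) together with the explicit semi-stability of $\widehat{\Omega}$ recalled in Section \ref{dualgr}. When $v$ is inert, $K_v/F_v$ is the unramified quadratic extension, hence \'etale, so the \'etale-extension clause in Proposition \ref{neron} is satisfied by the extensions through which the reduction of a CM point is realized. Proposition \ref{neron} then forces $red_v(x)$ to lie in the smooth locus of $\cals_{U,k}$. By the discussion of Section \ref{dualgr} the singular locus is precisely the set of intersection points of irreducible components, i.e., the formal modules with two critical indices, which are exactly the superspecial points. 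The smooth locus is therefore $\cals^{\mathrm{ns}}_k$, and so $red_v(x)\in\cals^{\mathrm{ns}}_k$.

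The substantive work is in the ramified case: tracing through the Dieudonn\'e--Ribet-bimodule dictionary and confirming that the bimodule attached to the reduction of a CM point is (isomorphic to) $\calo_{B_v}\otimes_{\calo_{K_v}}\calo_{B_v}$ and of type exactly $(1,1)$, precisely when $v$ ramifies in $K$. This is the content of Lemma \ref{2.17}, which adapts \cite[Thm.~2.4]{rib} from $F=\Q$ to the totally real setting. The inert case is, by comparison, essentially a formal consequence of the N\'eron/\'etale-descent argument packaged in Proposition \ref{neron}; the only delicate point there is verifying that semi-stability of $\cals_U$ persists over the \'etale extension used to realize the CM point, which is automatic since \'etale base change preserves both regularity and the nature of the singularities of the special fiber.
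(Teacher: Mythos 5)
Your proposal is correct and follows essentially the same route as the paper: the ramified case via Lemma \ref{2.17} and the Ribet-bimodule/Dieudonn\'e dictionary, and the inert case via Proposition \ref{neron} applied to $\widehat{\cals}_U$ with the unramifiedness of $K_v/F_v$ supplying the \'etale hypothesis. The paper additionally records a second, purely moduli-theoretic argument for the inert case (superspecial reduction would embed $\calo_{K_v}$ into the Eichler order of level $\varpi_{F_v}$ from Lemma \ref{eichorder}, which by Vign\'eras' embedding criterion forces $v$ to ramify in $K$), but its primary argument is the one you give.
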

\begin{proof}
	Suppose that $v$ ramifies in $K$. 
	By what discussed in Section \ref{lRb}, to every $v$-reduction of a CM point we can associate a $(\calo_{B_v},\calo_{B_v})$-bimodule of $\rk$-$8$. By Lemma \ref{2.17}, this bimodule 
	is $v$-admissible of type $(1,1)$. 
	
	%By the moduli interpretation of section \ref{moduli} combined with \ref{lRb}, 
	Again by Lemma \ref{2.17} and by the first bijection of (\ref{Dieudonne})  we obtain the following bijections
	\[
	\Set{\begin{array}{c}
			\text{$v$-admissible}\\
			\text{$(\calo_{B_v},\calo_{B_v})$-bimodules}\\
			\text{of type $(1,1)$, $\rk$-$8$}\\
	\end{array}}
\longleftrightarrow
	\Set{\begin{array}{c}
			\text{Special formal}\\
			\text{$\calo_{B_v}$-modules of height $4$}  \\
			\text{superspecial over $k$}\\
	\end{array}}
	\longleftrightarrow
	%\Set{
	%	\begin{array}{c}
	%		\text{Special Dieudonn\'e}
	%		\\
	%		\text{modules}\\ 
	%		\text{of $\rk$-4}\\
	%\end{array}}
%\longleftrightarrow
\text{Sing}(\widehat{\cals}_k)
   \]
	where the first two sets are considered up to their respective isomorphisms.
	
	On the other hand, the case of $v$  inert  follows from Proposition \ref{neron}, by taking $\ccal=\widehat{\cals}_U$ and $y$ a CM point. Nonetheless, we give an alternative argument via the theory of Ribet's bimodules.
	
	Consider  a special CM point $x$ which reduces to a superspecial point. Denote by $X$ and $X_k$ their formal module and the its reduction  respectively, where $k$ is the residue field at $v$. 
	
	We have therefore the following embeddings
	\[
	\End(X)\simeq \calo_{K_v}\hooklongrightarrow \End(X_k)\simeq M_2(\calo_{F_v})\otimes \calo_{B_v}, 
	\]
	where the first isomorphism is given by $x$ being a special CM point.
	\\Again by the above moduli correspondences, we have that a superspecial point corresponds to a $v$-admissible bimodule of type $(1,1)$, which corresponds to a morphism $f\colon \calo_{B_v}\hookrightarrow M_2(\calo_{B_v})$. By Lemma \ref{eichorder} we have that the commutant of $f(\calo_{B_v})$ in $M_2(\calo_{B_v})$ is an Eichler order of level $\varpi_{F_v}$. We recall that CM points $x$ by $K_v$ there is an $\End(x)=\calo_{K_v}$-action commuting with the $\calo_{B_v}$-action. It follows from \cite[Thm.3.2]{vign} that $\calo_{K_v}$ embeds in an Eichler order of level $\varpi_{F_v}$ if and only if $v$ ramifies in $K$.
	%Moreover, by \cite[Cor.1.7]{vign}, for the following equality
	%\[
	%\calo_{B_v}=\calo_{K_v}\oplus\calo_{K_v}\varepsilon
	%\]
	%where $\varepsilon\in B_v^\times$ such that $x\varepsilon=\bar{\varepsilon}x$ for all $x\in K_v$ and $\varepsilon^2\in F_v^\times$, one needs $v$ to be unramified in $K$.

	We therefore conclude that all special CM points have superspecial reduction at a prime $v$ which ramifies in $K$.
\end{proof}

\subsubsection{An Auxiliary Space}\label{auxspsec}

Let $\nr\colon G'(\A_f)\rightarrow G'(\A_f^v)\times F_v^\times$ denote the morphism induced by the reduced norm on $B^{',\times}_v$ and by the identity elsewhere.

\begin{lem}
	The tower $\calx^{\text{ss}}$ is in bijection
	with
	\[
	\overline{\nr(G'(\Q)_0)}\backslash G'(\A_f^v)\times F_v^\times
	\]
\end{lem}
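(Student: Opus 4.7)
\begin{skprf}
The plan is to take the inverse limit over level structures in Lemma \ref{2.7ss}, Lemma \ref{2.11}, and in the description of $\calz_U$ from the connected-components subsection, so that the three ingredients of the fibre product $\calx^{\text{ss}} = \cals^{\text{ss}} \times_{\calz_k} \calz$ acquire the adelic presentations
\begin{equation*}
\cals^{\text{ss}} \simeq \overline{G'(\Q)_0} \backslash G'(\A_f^v), \quad
\calz \simeq \overline{G(\Q)_+} \backslash G(\A_f), \quad
\calz_k \simeq \overline{G'(\Q)_e} \backslash G'(\A_f^v)/\GL_2(\calo_{K_v}).
\end{equation*}
The claim will then follow by unwinding the fibre-product condition in these coordinates.

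First, I would split off the $v$-factor $G(\A_f) = B_v^\times \times G'(\A_f^v)$ and replace $B_v^\times$ by its image under the reduced norm. By strong approximation for the simply-connected group $\SL_1(B)$ relative to the archimedean place $\tau_1$ where $B$ is split, the kernel of $\nr \colon B_v^\times \twoheadrightarrow F_v^\times$ is absorbed by $\overline{G(\Q)_+}$, and so the adelic presentation of $\calz$ transforms into the shape $(\text{group}) \backslash (G'(\A_f^v) \times F_v^\times)$ that already matches the target.

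Second, I would make the two arrows of the fibre-product diagram explicit. The map $\cals^{\text{ss}} \to \calz_k$ is the tautological quotient. The reduction map $\calz \to \calz_k$ I would read off from Cerednik--Drinfeld: a connected component reduces to the irreducible component determined by its $G'(\A_f^v)$-coordinate taken modulo $\GL_2(\calo_{K_v})$, and the parity condition defining $G'(\Q)_e$ appears because multiplication by $\varpi_{F_v}$ shifts a vertex of the Bruhat--Tits tree by two, so only the class modulo $2$ of the $v$-adic valuation is remembered. Assembling the fibre product, a matched pair is then determined by the $G'(\A_f^v)$-coordinate of the superspecial point together with the $F_v^\times$-coordinate of the connected component, and the residual group acting diagonally on both factors is exactly $\nr(G'(\Q)_0)$; taking closures yields the asserted bijection.

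I expect the main obstacle to lie in step two, namely verifying that the $\GL_2(\calo_{K_v})$-quotient on the $\calz_k$-side together with the parity constraint defining $G'(\Q)_e$ cancel \emph{exactly} the discrepancy between $G'(\Q)_0$ and $G'(\Q)_e$, so that the fibre product collapses cleanly onto a single copy of $G'(\A_f^v) \times F_v^\times$ without stray identifications on the $F_v^\times$-factor. This hinges on the precise interplay between Cerednik--Drinfeld uniformisation and the Bruhat--Tits description of irreducible components from Section \ref{dualgr}.
\end{skprf}
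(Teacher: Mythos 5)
Your plan is plausible in outline, and some of its ingredients are sound: absorbing $\ker(\nr_v)\subset B_v^\times$ into $\overline{G(\Q)_+}$ via strong approximation for $B^1$ at the split archimedean place is indeed how the reduced-norm description of $\calz_U$ is obtained. But as written the proposal does not prove the lemma, because the decisive step is exactly the one you defer as ``the main obstacle'': showing that after gluing over $\calz_k$ the residual rational group acting on $G'(\A_f^v)\times F_v^\times$ is precisely $\overline{\nr(G'(\Q)_0)}$, with no stray identifications on the $F_v^\times$-factor. That collapse \emph{is} the content of the statement. In particular, on the $\calz$-side the rational group is $G(\Q)_+\subset B^\times$, whereas the target group $\nr(G'(\Q)_0)$ lives on the $B'$-side; trading one for the other needs an actual argument (for instance strong approximation for $B'^1$ relative to the place $v$, where $B'$ is split, together with the transitivity of $B_v'^\times$ on the relevant $v$-adic data and an identification of the stabilizer), and nothing in your sketch supplies it. You also quietly carry over the level structure at $v$ when passing to towers, where the paper warns that the maximal level at $v$ must be kept for $\calz_{U,k}$; this is harmless for a sketch but should be said.

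The paper's own proof takes a different and shorter route: it does not unwind the fibre product over $\calz_{U,k}$ at all. It starts from the description of $\calx^{\text{ss}}_U$ through $\cals^{\text{ss}}_{U,k}\times\calz_U$ (the second factor being the connected components of the generic fiber, not the irreducible components of the special fiber), writes the two factors as double cosets using Lemma \ref{2.7ss} and the reduced-norm description of $\calz_U$, rewrites the superspecial factor as a full adelic double coset $G'(\Q)\backslash G'(\A_f)/\calo_{B_v}^\times U^v$ following Zhang, and then concludes by citing Cornut--Vatsal (Cor.\ 3.10), where the manipulation you are attempting to redo by hand --- converting the pair of double cosets into the single quotient $\overline{\nr(G'(\Q)_0)}\backslash G'(\A_f^v)\times F_v^\times$ --- is carried out. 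So your route is genuinely different in mechanics (direct unwinding of the fibre product plus strong approximation, versus a product description plus citation), but to make it a proof you must actually perform the verification you flagged, which is precisely the step the paper outsources to those references.
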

\begin{proof}
	Since, by definition, $\calx^{\text{ss}}=\cals_{U,k}^{\text{ss}}\times \calz_U$, we thus have that its points are in bijection with
	\[
	(G'(\Q)_0\backslash G'(\A_f^v)/ U^v)\times(G(\Q)_+\backslash F_v^\times \times G(\A_f^v)/U)
	\]
	 by Lemma \ref{2.7ss}.
	\\As in  \cite[p.259]{sz}), we can write
	$G'(\Q)_0\backslash G(\A^v_f)/U^v \simeq G'(Q)\backslash G'(\A_f)/\calo_{B_v}^\times U^v$
	and then we conclude by  \cite[Cor. 3.10, p.42]{cv}.
\end{proof}

We finally need to introduce one more auxiliary object, which we denote by 
\[
\cala_v:=\overline{G'(\Q)}_0\backslash  G'(\A_f)
\]
where we point out that  $G'(\A_f)\simeq G'(\A_f^v)\times G'(F_v)$.

We have thus the following commutative triangle
\[
\xymatrix{
&\cala_v \ar[d]^-{q} \ar@{.>}[rd]
\\
&\calx^{\text{ss}} \ar[r]^-{c_v} & \calz_k
}
\]
where the morphism $q$ is induced by $\nr$.

\subsubsection{Simultaneous Reductions}\label{simred}
Let $\Sigma$ be a finite set of non-archimedean places of $F$, and take $\Sigma$  disjoint from $S$. Suppose that $B$ ramifies at every place of $\Sigma$.
Let $R$ be a finite subset of $G_K^{ab}$. Then we consider
\[
\calx^{\Sigma\times R}:=\prod_{v\in \Sigma,R}\calx_{U,k_v}
\]
where $k_v$ is the residue field modulo $v$. With the same notation we also write $\calz^{\Sigma\times R}$.

There is a natural generalization of our two main maps previously defined; namely, we have the $\mathit{simultaneous}$ reduction map
\begin{align*}
Red\colon\SCM&\longrightarrow\calx^{\Sigma\times R}
\\
x&\longmapsto (red_v(\sigma x))_{v\in \Sigma, \sigma\in R}
\end{align*}
and the $\mathit{simultaneous}$ irreducible component map
\begin{align*}
	C\colon \calx^{\Sigma\times R}&\longrightarrow \calz^{\Sigma\times R}
	\\
	(x_{v,\sigma})_{v,\sigma}&\longmapsto(c_v(x_{v,\sigma}))_{v,\sigma}.
\end{align*}

%In what follows we also need a simultaneous version of $\cala_v$, denoted by $\cala_S$
%\[
%\cala_S:=\overline{G_S(\Q)}\backslash G_S(\A_f)
%\]
%with a map $q_S\colon\cala_S\twoheadrightarrow \calx^{\textit{ss}}$ as in the previous section.

\section{Equidistribution}\label{equisec}

Here we state the main equidistribution result, whose proof is completed in Section \ref{proof}. Note that we  write it in the simultaneaous case, although the various steps of the proof are (mostly) in the case of a single prime, for the sake of notational simplicity.
\\

Let $\ell$ denote a maximal ideal of $\calo_F$ such that $\ell\notin S\cup\Sigma$, so that $B_\ell\simeq M_2(F_\ell)$. Moreover, we do not impose any restriction on $\ell$ relatively to $K$.

Let us here introduce two important technical definitions. 

A $\ell$-$\mathit{isogeny}$ $\mathit{class}$ $\calh$ in $\SCM$ is a $B_\ell^\times$-orbit in $\SCM$. Two points $x,y\in S^{an}_U$ are $\ell$-$\mathit{isogenous}$ if the corresponding $g,g'\in G'(\A^v_f)$ coincide on every non-archimedean place of $F$ different from $\ell$.

\begin{rmk}
	In the case of $B=M_2(\Q)$, i.e., $\scal$ is a modular curve, two CM points are $\ell$-isogenous if, for some $n$, there exists a degree-$\ell^n$ isogeny between them.
\end{rmk}

%We define also an almost everywhere-like continuity on $\calh$ as follows. 

We write, for $f\colon\CM\rightarrow\C$ and $a\in\C$, the limit $\lim_{x\to\infty}f(x)=a$ if for all $\epsilon>0$ there exists a compact $\ C_\epsilon$  in $\CM$ such that
$|f(x)-a|<\epsilon$ for all $x\in\calh-C_\epsilon$.

Let $\Rat_K(\ell)$ be the subgroup of $G_K^{ab}$ of $\ell$-$\mathit{rational}$ elements, that is, those $\sigma\in G_K^{ab}$ such that $\sigma=\rec_K(\lambda)$ for $\lambda\in \widehat{K}^\times$, whose $\ell$-adic component $\lambda_\ell$ belongs to $K^\times\cdot F_\ell^\times\le K^\times_\ell$.

\begin{thm}\label{mainthm}
	Let $\calg$ be an open compact subgroup of $G_K^{ab}$ with Haar measure $dg$, and let also $R\subset G_K^{ab}$ be a finite subset of element pairwise disjoint modulo $\Rat_K(\ell)$. Then 
	\begin{equation}\label{equi}
		\lim_{x\to\infty}\frac{1}{\vol(\calg)}\int_{\calg}f\circ Red(gx)dg=\int_{C^{-1}(g\bar{x})}f d\mu
	\end{equation}
	
	for every $f\in\calc(\calx^{\Sigma\times R},\C)$, where $\vol(\calg):=\int_{\calg}dg$ and $\mu$ is the measure on the fiber $C^{-1}(g\bar{x})$ constructed in Section \ref{mof}. 
	
	Moreover, let $\overline{\calh}$ be the image of $\calh$ in $\scal_U^{\text{SCM}}$, and consider the characteristic functions of the elements of $\calx^{\Sigma\times R}_U$. Then, for almost all $x\in\overline{\calh}$, we obtain
	\[
	Red(\calg\cdot x)=C^{-1}(\calg\cdot (C\circ Red(x)))
	\]
	in $\calx_U^{\Sigma\times R}$.
\end{thm}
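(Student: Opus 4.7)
The plan is to follow the Cornut--Vatsal strategy, using the adelic descriptions from diagram (\ref{natproj}) together with the auxiliary space $\cala_v=\overline{G'(\Q)}_0\backslash G'(\A_f)$ introduced in Section \ref{auxspsec}, and then applying Ratner's theorem. After stating the single-prime case, the simultaneous case over $\Sigma\times R$ follows mutatis mutandis by working in the product of the corresponding adelic spaces.

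First, I would translate the left-hand side of (\ref{equi}) into the adelic picture. By Lemma \ref{sCM}, a special CM point $x$ corresponds to a class $[g_x]\in T(\Q)_0\backslash G'(\A_f^v)/U^v$; by the compatibility between Artin's reciprocity map $\rec_K$ and the $T(\A_f)$-action on $\SCM$, the average $\frac{1}{\vol(\calg)}\int_{\calg}f\circ Red(gx)\,dg$ becomes an average over a compact subset of $T(\A_f)$ of the function $f\circ Red$ evaluated on translates of $[g_x]$ in $\cala_v$. Since $Red$ factors through the natural projection $q\colon\cala_v\to\calx^{ss}$ (by the auxiliary commutative triangle at the end of Section \ref{auxspsec}), it suffices to prove equidistribution of the translates $\{t\cdot[g_x]:t\in\calg\}$ in a suitable fiber of $\cala_v\to \calz$, and then push the resulting measure forward through $q$.

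Next comes the key step: producing unipotent dynamics to invoke Ratner. The hypothesis that the conductors go $v$-adically to infinity provides a sequence $x_n$ whose $v$-component in $T(F_v)$ escapes every compact set. After translating by a suitable $\ell$-isogeny (whose existence is guaranteed because the points of a fixed $\overline{\calh}$ are $B_\ell^\times$-orbits, and $\ell\notin S\cup\Sigma$ ensures $B_\ell\simeq M_2(F_\ell)$), one can replace $x_n$ by representatives in the same $\ell$-isogeny class differing by elements of $T(F_\ell)\hookrightarrow\GL_2(F_\ell)$. The hypothesis that the elements of $R$ are pairwise disjoint modulo $\Rat_K(\ell)$ guarantees that the associated orbits in $\prod_{\sigma\in R}\cala_v$ are generic enough that no product of lower-dimensional subgroups can contain them. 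Applying Ratner's classification of invariant measures under the resulting unipotent action on the product of adelic quotients, the orbit measures converge weakly to the Haar measure on the orbit closure; identifying this closure with $q^{-1}$ of the relevant fiber of $C$, and pushing forward by $q$, yields the measure $\mu$ on $C^{-1}(g\bar x)$ and hence the equality (\ref{equi}).

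The main obstacle is the Ratner input: one has to (a) verify that the relevant sequences of adelic points genuinely escape every proper closed subgroup orbit in the product over $R$, which is where the $\ell$-rational disjointness assumption on $R$ is crucial and requires a careful algebraic-groups computation of centralizers of the embedded torus; and (b) identify the orbit closure concretely as $q^{-1}(C^{-1}(g\bar x))$, which in turn uses Proposition \ref{vram} and Lemma \ref{2.11} to match the fibers of $C$ with the levels $K_v^\times\GL_2(\calo_{K_v})U^v$ appearing in (\ref{irredcomp}). Once (\ref{equi}) is established, the second assertion is obtained by specializing $f$ to characteristic functions of single components $\{z\}\subset\calx^{\Sigma\times R}_U$ and observing that both sides of (\ref{equi}) then count, up to the same positive constant, the points of $Red(\calg\cdot x)$ lying over $z$; since by construction $Red$ is compatible with $C$ via the diagram (\ref{natproj}), for almost all $x\in\overline{\calh}$ the support of $Red(\calg\cdot x)$ is precisely the full preimage $C^{-1}(\calg\cdot(C\circ Red(x)))$, as required.
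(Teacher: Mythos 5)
Your strategy is the paper's own (the Cornut--Vatsal route): pass to the adelic double-coset descriptions, work inside an $\ell$-isogeny class with $B_\ell\simeq M_2(F_\ell)$, apply Ratner's theorem to one-parameter unipotent orbits in $(B_\ell^1)^{\Sigma\times R}$, identify the orbit closure with the fiber of $C$ (this is where the pairwise disjointness of $R$ modulo $\Rat_K(\ell)$ enters, through the density statement), push the invariant measure forward, and finally get the support statement by testing against characteristic functions. So there is no divergence of approach; the problem is a genuine gap at the step you yourself flag as the key one, namely ``producing unipotent dynamics.''

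Concretely: the points of $\calh$ that escape compact sets move in the \emph{toral} direction at $\ell$ (not at $v$ --- special CM points have conductor $0$ at $v$, and in Theorem \ref{mainthm} the limit is taken over the $\ell$-isogeny class), and translates by elements of $T(F_\ell)$ are semisimple, so Ratner's theorem does not apply to them as you invoke it; ``the resulting unipotent action'' is never constructed in your argument. The missing ingredient is the Iwasawa-like decomposition of Lemma \ref{iwas}: using the maximal compact level at $\ell$ one writes $\calg\cdot\calh\cdot U=\bigcup_{i\in I}\bigcup_{n\ge 0}\calg\, x_i u_{i,n} U$ with finitely many base points $x_i$ and $u_{i,n}=u_i(\varpi_\ell^{-n})$ lying on genuine one-parameter unipotent subgroups; only after this trade of toral translates for unipotent ones does Lemma \ref{tec} (Ratner combined with the $B_\ell^1$-equivariant homeomorphism of Lemma \ref{quatunif}) apply. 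One also still needs the bookkeeping that converts the unipotent limit along the balls $D_s$ into the stated $\lim_{x\to\infty}$ over $\calh$: integrate the pointwise Ratner limit over $\calg$ (dominated convergence plus Fubini), use that $t\mapsto\delta(x_iu_i(t))$ is constant on $D_s$, and assemble the compact exceptional sets $C_\epsilon$ from the finitely many $x_i$. Without the decomposition step your argument does not reach the point where Ratner can be quoted, so as written the proof is incomplete precisely at its crux.
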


\subsubsection{Measures on Fibers}\label{mof}
Recall from Section \ref{auxspsec} that by $\nr$ we denote the map (essentially) induced by the reduced norm on $B_v^{',\times}$, namely, $\nr\colon G'(\A_f)\rightarrow G'(\A_f^v)\times F_v^\times$. Let $G'(\A_f)^1:=\ker(\nr)$. In this section we follow the lines of \cite[Section 2.4.1]{cv} to prove the existence of family of measures on the fibers of $q\circ c$ and $c$.

\begin{lem}\label{fibstab}
Let $z$ be in $\calz$.  We have that:
	\begin{enumerate}
		\item the fibers $(c\circ q)^{-1}(z)$ are $G'(\A_f)^1$-orbits in $\cala_v$;
		\item the fibers $c^{-1}(z)$ are $(G'(\A_f^v)\times F_v^\times)^1$-orbits in $\calx^{\text{ss}}$;
		\item for $x=\overline{G'(\Q)_0}g\in\cala_v$, we have that 
		\[
		\Stab_{G'(\A_f)^{1}}(x)=g^{-1}G_0'(\Q)g
		\]
		is discrete and cocompact in $G'(\A_f)^1$.
		\item for $x=\overline{\nr(G'(\Q)_0)}g\in\calx^{\text{ss}}$, we have
		\[
		\Stab_{G'(\A_f^v\times F_v^\times)^{1}}(x)=g^{-1}\nr(G_0'(\Q))g
		\]
		is discrete and cocompact in $(G'(\A_f^v)\times F_v^\times)^1$.
	\end{enumerate}
\end{lem}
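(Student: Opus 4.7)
Everything will be derived from the coset descriptions $\cala_v=\overline{G'(\Q)_0}\backslash G'(\A_f)$ and $\calx^{\text{ss}}\simeq\overline{\nr(G'(\Q)_0)}\backslash(G'(\A_f^v)\times F_v^\times)$ recalled just above the lemma, together with the fact that the totally definite quaternion algebra $B'$ is anisotropic, so that the standard Borel--Harish-Chandra compactness applies.

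For (1) and (2), the plan is to verify the two inclusions separately. That any $G'(\A_f)^1$-orbit in $\cala_v$ is contained in a single fiber of $c\circ q$ is immediate, since $G'(\A_f)^1=\ker(\nr)$ and $c\circ q$ factors through $\nr$. For the reverse inclusion, given $\overline{G'(\Q)_0}g$ and $\overline{G'(\Q)_0}g'$ with common image in $\calz$, I would unwind the quotient by $\overline{\nr(G'(\Q)_0)}$ and then by the further subgroup of $G'(\A_f^v)\times F_v^\times$ collapsed by the projection to $\calz$, to obtain $\gamma\in\overline{G'(\Q)_0}$ such that $\nr(\gamma^{-1}g)$ and $\nr(g')$ differ precisely by an element of that further subgroup. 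Lifting that element along the surjection $\nr$ produces the desired $h\in G'(\A_f)^1$ with $g'=\gamma^{-1}gh$, exhibiting $[g]$ and $[g']$ in the same $G'(\A_f)^1$-orbit. Part (2) is proved by exactly the same manipulation performed one level lower in $\calx^{\text{ss}}$, with $(G'(\A_f^v)\times F_v^\times)^1$ understood as the kernel of the projection to $\calz$.

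For (3) and (4), I would apply the general formula $\Stab_H([g])=g^{-1}\Gamma g\cap H$ for the right-$H$ action on $\Gamma\backslash G$. This identifies the stabilizer with an intersection which I then want to equate with $g^{-1}G'_0(\Q)g$: the local component at $v$ of any element of $G'_0(\Q)$ lies, by definition, in the compact centralizer of $\iota(\calo_{B_v})$, so any element of the closure $\overline{G'(\Q)_0}$ that also lies in $G'(\A_f)^1$ is forced to be rational. Discreteness of $G'_0(\Q)$ (and of its $\nr$-image) inside $G'(\A_f)^1$ (resp.\ $(G'(\A_f^v)\times F_v^\times)^1$) follows from the discreteness of $G'(\Q)$ in $G'(\A)$ combined with the compactness of the norm-one archimedean factor $G'(\R)^1$, which is a product of copies of $S^3$ because $B'$ is totally definite. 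Cocompactness then follows from the compactness of $G'(\Q)\backslash G'(\A)^1$ (Borel--Harish-Chandra applied to the anisotropic $G'$), pushed down to the finite-adelic side through the compact kernel $G'(\R)^1$.

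The main obstacle I anticipate is the bookkeeping around the closures $\overline{G'(\Q)_0}$ and $\overline{\nr(G'(\Q)_0)}$: identifying the stabilizer with the rational subgroup $G'_0(\Q)$ rather than with its closure requires a local analysis at $v$, and one must verify that no extraneous element of the closure survives the intersection with $G'(\A_f)^1$. I expect this to be the only genuinely non-formal step, and it should follow from the compactness of the centralizer at $v$ combined with strong approximation for $G'$ outside the finite place $v$; everything else is an essentially formal consequence of the coset descriptions.
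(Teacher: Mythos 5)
Your plan is, in substance, a reconstruction of the result the paper actually invokes: the paper's own proof consists of citing \cite[Lemma 2.16]{cv}, adapted to the single place $v$ and to $G'(\Q)_0$ in place of $G(\Q)$, and of deducing (2) and (4) from (1) and (3) via the identifications $\cala_v/\ker(\nr)\simeq\calx^{\text{ss}}$ and the corresponding splitting of $G'(\A_f)$; your toolkit (norm maps, strong approximation, anisotropy of the totally definite $B'$) is the same one underlying that citation, and redoing (2),(4) ``one level lower'' instead of transporting (1),(3) is only a stylistic difference. However, one step of your reconstruction fails as written, and it is precisely the step carrying the content of the cited lemma.

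In the reverse inclusion of (1) you reach $\nr(g')=\nr(\gamma^{-1}g)\,u$ with $\gamma\in\overline{G'(\Q)_0}$ and $u$ in the subgroup collapsed by the projection to $\calz$, and you then ``lift $u$ along $\nr$'' to obtain $h\in G'(\A_f)^1$. This cannot work: any lift $\tilde u$ of $u$ satisfies $\nr(\tilde u)=u\neq 1$, so $\tilde u\notin G'(\A_f)^1=\ker(\nr)$, and $\tilde u$ has no reason to lie in $\overline{G'(\Q)_0}$ either; writing $g'=\gamma^{-1}g\,\tilde u\,h'$ with $\nr(h')=1$ only exhibits $[g']$ in the $G'(\A_f)^1$-orbit of $[g\tilde u]$, not of $[g]$. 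The non-formal point -- and what \cite[Lemma 2.16]{cv} actually supplies -- is that the norm discrepancy must be absorbed on the \emph{left}: one has to realize it (modulo the collapsed subgroup) as the reduced norm of an element of the rational group defining the left quotient, via Eichler's norm theorem/strong approximation, and in the present setting one must in addition reconcile $G'(\Q)_0$ (order $0$ at $v$) with the even-order group $G'(\Q)_e$ and the maximal compact at $v$ entering the description of $\calz_{U,k}$ in Lemma \ref{2.11}. Your sketch does not engage with this, and the remaining parts of (1)--(2) are purely formal, so this is a genuine gap. A related caution applies to cocompactness in (3)--(4): since $\ker(\nr)$ is supported at the single place $v$ (isomorphic to $\SL_2(F_v)$ because $B'$ splits at $v$), pushing compactness of $G'(\Q)\backslash G'(\A)^1$ down ``through the compact factor at infinity'' is not by itself enough; you also need control of the components away from $v$ of the stabilizer, which is exactly what the closure $\overline{G'(\Q)_0}$ encodes and what the local analysis at $v$ you postpone must deliver -- again the part the paper delegates to the cited lemma.
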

\begin{proof}
	The proof essentially follows from \cite[Lemma 2.16,p.18]{cv}. In our setting we work for a single place $v$,  with $G'(\Q)_0$ instead that $G(\Q)$. 
	
	%{\color{blue}
    %Since $\overline{Z(\Q)}G(\Q)$ is closed in $G'(\A_f)^1$  \cite[Cor.3.10 p.42]{cv} we have $\overline{Z(\Q)}G(\Q)=\overline{Z(\Q)G'(\Q)}=\overline{F^\times B'}=\overline{B'}=\overline{G'(\Q)}.$
    %}
	For $(2)$ and $(4)$, we note that since we have  the isomorphism of topological groups $\cala_v/\ker(\nr_v)\simeq \calx^{\textit{ss}}$ and that $G'(\A_f)\simeq G'(\A_f^v)\times F_v^\times \times \ker(\nr_v)$.
\end{proof}

\begin{cor}\label{unif}
	For $z\in\calz$  the following homeomorphisms hold.
	    \begin{enumerate}
		\item For $x\in(c\circ q)^{-1}(z)$, we have $Stab_{G'(\A_f)^{1}}(x)\backslash G'(\A_f)^1 \simeq (c\circ q)^{-1}(z)$;
		\item for $x\in c^{-1}(z)$, we have $\Stab(x)\backslash (G'(\A_f^v)\times F_v^\times)^1\simeq c^{-1}(z)$.
		\end{enumerate}
	\end{cor}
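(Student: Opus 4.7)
My plan is to feed the structural input of Lemma \ref{fibstab} into the classical topological principle that a continuous bijection from a compact space to a Hausdorff space is a homeomorphism. The key observation is that both statements (1) and (2) have the same formal shape: a fiber is realized as an orbit, the stabilizer is a discrete cocompact subgroup, and so the quotient of the acting group by the stabilizer ought to be identified with the orbit via the orbit map.

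For part (1), I would fix $z \in \calz$ and $x \in (c \circ q)^{-1}(z)$. By Lemma \ref{fibstab}(1), the fiber $(c \circ q)^{-1}(z)$ coincides with the $G'(\A_f)^1$-orbit of $x$, so the orbit map $h \mapsto h \cdot x$ is a continuous surjection $G'(\A_f)^1 \twoheadrightarrow (c \circ q)^{-1}(z)$ whose fibers are the left cosets of $\Stab_{G'(\A_f)^1}(x)$. Passing to the quotient on the source yields a continuous bijection
\[
\Phi\colon \Stab_{G'(\A_f)^1}(x) \backslash G'(\A_f)^1 \longrightarrow (c \circ q)^{-1}(z).
\]
By Lemma \ref{fibstab}(3), the stabilizer is discrete and cocompact in $G'(\A_f)^1$, so the source of $\Phi$ is compact. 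The target is Hausdorff, being a subspace of $\cala_v = \overline{G'(\Q)_0} \backslash G'(\A_f)$, itself the quotient of the locally compact Hausdorff group $G'(\A_f)$ by the closed subgroup $\overline{G'(\Q)_0}$. A continuous bijection from a compact space to a Hausdorff space is automatically closed, hence a homeomorphism; this yields (1).

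Part (2) will follow by the exact same argument, with $G'(\A_f)^1$ replaced by $(G'(\A_f^v) \times F_v^\times)^1$ and $x$ taken in $c^{-1}(z) \subset \calx^{\text{ss}}$. The orbit realization comes from Lemma \ref{fibstab}(2), the discrete cocompactness of the stabilizer from Lemma \ref{fibstab}(4), and the target Hausdorffness from viewing $\calx^{\text{ss}}$ as $\overline{\nr(G'(\Q)_0)} \backslash (G'(\A_f^v) \times F_v^\times)$.

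I do not expect any genuine obstacle here, since all the substantive geometric and arithmetic work has already been packaged into Lemma \ref{fibstab}: what remains is the formal passage from "orbit with discrete cocompact stabilizer inside a locally compact Hausdorff group" to "quotient homeomorphism". The only point requiring a moment of attention is verifying Hausdorffness of $\cala_v$ and $\calx^{\text{ss}}$, which is immediate once one observes that in both cases we have deliberately quotiented by the \emph{closure} of the rational subgroup, so that the left-acting subgroup is closed.
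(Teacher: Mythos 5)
Your proposal is correct and follows essentially the same route as the paper, which simply observes that the orbit map $g\mapsto gx$ descends to the desired $G'(\A_f)^1$-equivariant homeomorphism via Lemma \ref{fibstab}. You merely make explicit the topological justification (quotient by a cocompact stabilizer is compact, the target is Hausdorff as a coset space by a closed subgroup, and a continuous bijection from a compact space to a Hausdorff space is a homeomorphism), which is exactly the content the paper leaves implicit.
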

\begin{proof}
	By Lemma \ref{fibstab}, the $G(\A_f)^1$-equivariant homeomorphism is given by  $g\mapsto gx$.
\end{proof}

We conclude by noting that Lemma \ref{fibstab} and Corollary \ref{unif} imply the existence and uniqueness of the probability measures

\begin{itemize}
	\item $\mu_z$ on $(c\circ q)^{-1}(z)$
	\item $\mu$  on $c^{-1}(z)$
\end{itemize}

which are $G'(\A_f)^1$-invariant and $G'(\A_f)\times F^\times_v$-invariant respectively.
\\

Let $\mu^1$ a Haar measure on  $G'(\A_f)^1$. Here we explain how to describe an induced family of Haar measure $(\mu^1_z)_z$ on the fibers $(c\circ q)^{-1}(z)$ for $z\in\calz$. This construction that can be viewed as an analogue of integration along fibers.

For  a compact open subgroup $U^1$ of $G'(\A_f)^1$,  let $pr\colon G'(\A_f)^1\rightarrow G'(\A_f)^1/U^1$ be the canonical projection  and consider a point $x$ of the fiber $(c\circ q)^{-1}(z)$. Let $\bar{\mu}$ be the Haar measure of $G'(\A_f)^1/U^1$ given by the pushforward of $\mu^1$ by $pr$.
\\By \cite[Prop.4.a, p.31]{bour} we have
\[
\int_{(c\circ q)^{-1}(z)}\mathbf{1}_{G'(\A_f)^1/U^1}\;d\bar{\mu} \cdot\int_{(c\circ q)^{-1}(z)}\mathbf{1}_{xU^1}\;d\mu^1_z=\int_{G'(\A_f)^1}\mathbf{1}_{U^1}\;d\mu^1
\]

where the second integral is called the $\mathit{orbit}$ $\mathit{means}$ of $\mathbf{1}_{xU^1}$.
\\Since for any function $f$ on $G'(\A_f)^1/U^1$ the composition $f\circ pr$ is constant on the orbits, we immediately obtain
\[
\#\Stab_{U^1}(x)=\int_{(c\circ q)^{-1}(z)}\mathbf{1}_{G'(\A_f)^1/U^1}\;d\bar{\mu}.
\]

This characterizes uniquely the measures $\mu^1_z$'s by the following formula
\begin{equation}
	\mu^1_z(xU^1)=\dfrac{1}{\#\Stab_{U^1}(x)}\mu^1(U^1)\;.
\end{equation}
which Orbit-Stabilizer theorem can be seen as a very special case of. Note that the finiteness of $\Stab_{U^1}(x)=\Stab_{G^1_S(\A_f)}(x)\cap U^1$ comes from its description as an intersection of  a discrete and a compact set.

\subsubsection{Quaternionic Uniformization}
Fix a place $\ell$ such that $\ell\notin \Sigma$ and  $B$ is split at $\ell$. We thus denote by $B^1_{\ell}:=\{b\in B_{\ell}: \nr(b)=1 \}$ the reduced norm-$1$ quaternions. Indeed we have $B^1_{\ell}\simeq\SL_2(F_\ell)$.

\begin{lem}\label{quatunif}
	For $x\in c^{-1}(z)/H$, the map
	\[
	\Stab(x)\backslash B_\ell^1\simeq c^{-1}(z)/U
	\]
	
	is a $B_\ell^1$-equivariant homeomorphism.
\end{lem}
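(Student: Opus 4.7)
The plan is to deduce this from Corollary~\ref{unif}(2) by restricting the transitive action of $(G'(\A_f^v)\times F_v^\times)^1$ on $c^{-1}(z)$ to its $\ell$-component $B_\ell^1$ and showing that transitivity persists after quotienting by $U$, using strong approximation. First I would unpack the target: Corollary~\ref{unif}(2) provides a $(G'(\A_f^v)\times F_v^\times)^1$-equivariant homeomorphism
\[
\Stab(x)\backslash (G'(\A_f^v)\times F_v^\times)^1\ \xrightarrow{\ \sim\ }\ c^{-1}(z),\qquad h\mapsto xh,
\]
so taking right $U$-coinvariants identifies $c^{-1}(z)/U$ with $\Stab(x)\backslash(G'(\A_f^v)\times F_v^\times)^1/U$. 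Since $\ell\neq v$, the group $B_\ell^1$ embeds naturally as the $\ell$-coordinate inside $(G'(\A_f^v)\times F_v^\times)^1$, yielding a continuous $B_\ell^1$-equivariant map $\Stab(x)\backslash B_\ell^1\to c^{-1}(z)/U$.

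The core step is to prove bijectivity via strong approximation for the simply-connected semisimple $F$-group $\SL_1(B')$. Because $\ell\notin S\cup\Sigma$ and $B'$ differs from $B$ only at $v$ and $\tau_1$, the algebra $B'$ is split at $\ell$, so $B'^1(F_\ell)\simeq\SL_2(F_\ell)$ is non-compact and isotropic. Strong approximation (Kneser--Platonov--Rapinchuk) then gives density of $B'^1(F)$ in $B'^1(\A^\ell)$. I would combine this density with compactness of $U$ and the explicit description $\Stab(x)=g^{-1}\nr(G'_0(\Q))g$ from Lemma~\ref{fibstab}(4) to obtain the decomposition
\[
(G'(\A_f^v)\times F_v^\times)^1\ =\ \Stab(x)\cdot B_\ell^1\cdot U,
\]
which is exactly the surjectivity needed. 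Injectivity reduces to identifying the stabilizer of $xU$ inside $B_\ell^1$: by Lemma~\ref{fibstab}(4) the full stabilizer is discrete and cocompact, and intersecting it with the $\ell$-component after taking $U$-coinvariants yields the $\Stab(x)$ appearing on the left.

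Finally, upgrading the continuous bijection to a homeomorphism is automatic from the properness of the translation action guaranteed by the discreteness/cocompactness in Lemma~\ref{fibstab}(4), together with the local compactness of $B_\ell^1$ and of the right-hand side. The main obstacle will be aligning strong approximation with the specific form of $\Stab(x)$: one has to check that the approximation of an element of $(G'(\A_f^v)\times F_v^\times)^1$ by an element of $B'^1(F)\subset G'(\Q)$ is compatible \emph{both} with the compact open subgroup $U$ at all places other than $\ell$ \emph{and} with the reduced-norm projection at $v$ that defines $\nr$, so that the approximating global element actually lands in $G'_0(\Q)$ (order zero at $v$) rather than in the full $G'(\Q)$. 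Once this compatibility is arranged, the rest is essentially bookkeeping.
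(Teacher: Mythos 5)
Your proposal is correct and follows essentially the same route as the paper: strong approximation for the norm-one group of $B'$ at the split place $\ell$, the resulting decomposition $\Stab(\tilde x)\cdot B_\ell^1\cdot U=G'(\A_f)^1$ giving transitivity of $B_\ell^1$ on the fiber modulo the level, identification of the stabilizer via Lemma \ref{fibstab}, and discreteness/cocompactness to upgrade the orbit bijection to a homeomorphism. The one "obstacle" you flag (compatibility of the approximation with the reduced-norm projection at $v$) is exactly what the paper sidesteps by first proving the statement upstairs for $(c\circ q)^{-1}(z)/U$ inside $\cala_v$, where strong approximation applies directly in $G'(\A_f)^1$, and then transporting it through the $B_\ell^1$-equivariant homeomorphism $(c\circ q)^{-1}(z)/U\simeq c^{-1}(z)/U'$ induced by $q$ with $U=\nr^{-1}(U')$, rather than approximating directly in $(G'(\A_f^v)\times F_v^\times)^1$.
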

\begin{proof}
	Let $U$ be a compact open subgroup of $G'(\A_f^\ell)^1$. The $G'(\A_f^\ell)^1$-action on the fibers induces a $B_\ell^1$-action on $(c\circ q)^{-1}(z)$ for $z\in\calz$. We obtain that this action is transitive, and the stabilizer of $x$ is discrete and cocompact in $B^1_\ell$. This follows by adapting \cite[Lemma 2.19]{cv} to our spaces.
	
	Let $x=\overline{G'(\Q)}_0gU$, with $g\in G'(\A_f)$. By Strong Approximation (see \cite[Thm 28.2.10, p.481]{voi}), we have, for $\tilde{x}\in\overline{G'(\Q)}_0g_1$, with $g_1\in G'(\A_f)^1$,
	\[
	\Stab(\tilde{x})B_\ell^1 U=G'(\A_f)^1,
	\]
	and by Lemma \ref{fibstab} we have $\Stab(\tilde{x})=g_1^{-1}G'(\Q)_0^1 g_1$.  It immediately follows that 
	\begin{equation}\label{lemma2.20}
	(c\circ q)^{-1}(z)=\tilde{x}G'(\A_f)^1=\tilde{x}B_\ell^1
	\end{equation}
	and therefore $B_\ell^1$ acts transitively on $(c\circ q)^{-1}(z)/U$.
	
	Now let $U'$ be a compact open subgroup of $(G'(\A_f^v)^1\times F_v^\times)^\ell$. Then again there is a $B_\ell^1$-action on $c^{-1}(z)/U'$, and as before, for $x\in c^{-1}(z)/U'$, this action is transitive and $\Stab(x)$ is discrete and cocompact in $B_\ell^1$. For $x=\overline{\nr(G'(\Q)_0)}gU'$ with $g\in G'(\A_f^v)\times F_v^\times$, the stabilizer of $x$ can be determined explicitly as in \cite[Lemma 2.20]{cv}. Let $U=\nr^{-1}(U')$ a compact open in $G'(\A_f)^1$. We have that $q$ induces a $B_\ell^1$-equivariant homeomorphism 
	\[
	(c\circ q)^{-1}(z)/U'\simeq c^{-1}(z)/U
    \]
    and  equation (\ref{lemma2.20}) yields that
    \[
    \Stab(x)\backslash B_\ell^1\simeq c^{-1}(z)/U
    \] 
    is a $B_\ell^1$-equivariant homeomorphism.
\end{proof}

\begin{rmk}
 We underline how the hypothesis on $\ell$ is essential. In particular, if $\ell\in\Ram(B)$, then $B'_\ell$ would be compact and thus Strong Approximation would not apply.
\end{rmk}

\subsubsection{Iwasawa-like Decomposition}
This decomposition is a crucial technical step in the proof of Theorem \ref{mainthm}, as it shows how to move from Galois orbits to unipotent orbits, which is essential in order to apply Ratner's theorem.
\\

Let $u\colon F_\ell\rightarrow G$ be a continuous, additive morphism such that \newline$du(t)/dt|_{t=0}\neq 0$. Then we call its image $P^t=\{u(t): t\in F_\ell\}$  a $\mathit{one}$-$\mathit{parameter}$ subgroup.  We point out that for $\GL_n(F_\ell)$, every one-parameter subgroup consists of unipotent elements.
\\For a nilpotent $N\in B_\ell^\times$, we consider the one-parameter $\mathit{unipotent}$ subgroup defined by $u(t)=1+tN$.

\begin{lem}\label{iwas}
	Let $\calh$ be a $\ell$-isogeny class.
	There is a finite set of indexes $I$, and for each index $i$ there is a $x_i\in\calh$ and a one parameter unipotent subgroup $P^t_i$ such that
	\[
	\calg\cdot \calh \cdot U=\bigcup_{i\in I}\bigcup_{n\ge 0}\calg\cdot x_i u_{i,n}\cdot U 
	\]
	where $u_{i,n}=u_i(\varpi_\ell^{-n})\in u_i(D)$ for $D$ some compact open subgroup of $F_\ell^\times$.
\end{lem}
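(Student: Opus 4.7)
The plan is to translate the statement into one about double cosets inside $\GL_2(F_\ell) \simeq B_\ell^\times$ (available since $\ell \notin \Ram(B)$) and then to exhibit representatives via an Iwasawa-type decomposition against the action of the torus $K_\ell^\times$. First I would fix a base point $x_0 \in \calh$, so that by definition of an $\ell$-isogeny class $\calh = \GL_2(F_\ell)\cdot x_0$. Next, by Lemma~\ref{sCM} and class field theory, the Galois action of $\calg$ on $\SCM$ factors through the idele class group of $K$; at the prime $\ell$ its image is an open subgroup $\calg_\ell \subseteq K_\ell^\times$, embedded into $\GL_2(F_\ell)$ via the fixed optimal embedding $\tau$. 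Since the components of $\calg$ and $U$ away from $\ell$ only contribute trivially to the orbit structure at $\ell$, the statement reduces to choosing representatives of the prescribed shape for the double coset space $\calg_\ell \backslash \GL_2(F_\ell) / U_\ell$.

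The key geometric input is the action of $\GL_2(F_\ell)$ on its Bruhat-Tits tree $\calt_\ell$. The embedded torus $K_\ell^\times$ pointwise fixes a vertex when $\ell$ is inert in $K$, the midpoint of an edge when $\ell$ is ramified in $K$, and an apartment when $\ell$ splits in $K$; in all three cases, the orbits of $K_\ell^\times$ on the vertices of $\calt_\ell$ are labelled by the combinatorial distance $n \in \Z_{\ge 0}$ to this fixed set. Using the standard Iwasawa decomposition $\GL_2(F_\ell) = T(F_\ell) N(F_\ell) \GL_2(\calo_{F_\ell})$ together with the conjugation identity
\[
\mathrm{diag}(\varpi_\ell^{-n},1)\, u(1)\, \mathrm{diag}(\varpi_\ell^{n},1) = u(\varpi_\ell^{-n}),
\]
every such orbit representative can be rewritten as $u_i(\varpi_\ell^{-n})$ for a suitable one-parameter unipotent subgroup $u_i$, with $i$ ranging over a finite set accounting for the Weyl- and apartment-directions, the finite index $[\GL_2(\calo_{F_\ell}) : U_\ell]$, and, in the split case, the two halves of the fixed apartment. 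The torus factor $T(F_\ell)$ arising from Iwasawa is absorbed into the action of $K_\ell^\times$ on the left, which is the reason we use the embedded $K_\ell^\times$ rather than the standard diagonal split torus.

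Packaging everything back: for each $i \in I$ let $x_i \in \calh$ realise the $i$-th initial coset, let $u_i$ be the associated one-parameter unipotent subgroup, and take $D \subseteq F_\ell^\times$ to be any compact open subgroup such that $u_i(D) \subseteq U_\ell$ for all $i$; then $u_{i,n} = u_i(\varpi_\ell^{-n})$ is a canonical representative of the coset $u_i(\varpi_\ell^{-n})\cdot u_i(D)$, and the stated equality $\calg \cdot \calh \cdot U = \bigcup_{i,n} \calg \cdot x_i u_{i,n} \cdot U$ follows. The main obstacle will be the uniform treatment of the three split/inert/ramified cases and the careful normalisation of the $u_i$ so that the decomposition is an equality rather than merely an inclusion; a subtler point is to verify that $\calg_\ell$, although possibly strictly smaller than $K_\ell^\times$, still suffices to absorb the torus direction of the Iwasawa decomposition, which is handled by enlarging $I$ by the finite index $[K_\ell^\times : \calg_\ell \cdot \calo_{K_\ell}^\times]$ (finite after quotienting by the centre).
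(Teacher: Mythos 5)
Your overall strategy -- pass to the double-coset problem for $B_\ell^\times\simeq\GL_2(F_\ell)$ with the embedded torus on the left and a compact open on the right, and parametrize the cosets by the Bruhat--Tits tree -- is the same circle of ideas as the proof the paper relies on (the paper simply defers to Cornut--Vatsal, Section 2.6, with one remark about special CM points). However, the step you use to manufacture the unipotent representatives fails in exactly the cases you must cover. Since the paper imposes no condition on $\ell$ relative to $K$, the prime $\ell$ may be inert or ramified in $K$; then the embedded $K_\ell^\times$ is compact modulo the center $F_\ell^\times$, so it neither contains nor is conjugate to the diagonal torus and cannot ``absorb the torus factor $T(F_\ell)$ arising from Iwasawa''. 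Moreover the identity $\mathrm{diag}(\varpi_\ell^{-n},1)\,u(1)\,\mathrm{diag}(\varpi_\ell^{n},1)=u(\varpi_\ell^{-n})$ does not give an equivalence of double cosets, because $\mathrm{diag}(\varpi_\ell^{n},1)\notin\GL_2(\calo_{F_\ell})$; indeed $u(1)$ and $u(\varpi_\ell^{-n})$ lie in \emph{different} $K_\ell^\times$--$\GL_2(\calo_{F_\ell})$ double cosets. So in the nonsplit cases your argument, as written, does not produce the representatives $u_i(\varpi_\ell^{-n})$, which is the whole content of the lemma.

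The correct mechanism is the one you only gesture at: after normalizing the embedding so that the vertex (inert) or edge (ramified) fixed by $K_\ell^\times$ is the standard one, the $K_\ell^\times$-orbits on vertices are the spheres around this fixed set, and an elementary-divisor computation shows that $u(\varpi_\ell^{-n})$ moves the base vertex to distance $2n$; hence a single unipotent ray meets only every other orbit, and the finitely many base points $x_i$ are needed precisely to repair this parity defect, in addition to the finite indices you list. Two further points are dismissed too quickly: the reduction to $\calg_\ell\backslash\GL_2(F_\ell)/U_\ell$ ignores the quotient by $\overline{T(\Q)}_0$ and the prime-to-$\ell$ components of $\calg$ and $U$, which contribute a further finite (ray-class-group type) set of indices; controlling this is part of the lemma and is where the paper's remark that any $x_0\in\calh$ has the form $\overline{T(\Q)}_0g_0$ with $g_0\in G'(\A_f^v)$ enters. (The easy direction is automatic: $x_iu_{i,n}\in\calh$, so the right-hand side is contained in the left; all the work is in the covering, which is exactly where these gaps sit.)
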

\begin{proof}
   Our setting hardly changes the proof given in \cite[Section 2.6, p.29]{cv}. One of the few details affected by having a $\ell$-isogeny class of $\mathit{special}$ CM points is that, by Lemma \ref{SCM}, any element $x_0\in\calh$ is such that $x_0=\overline{T(\Q)}_0g_0$ with $g_0\in G'(\A_f^p)$.
\end{proof}

%{\color{blue}We give to this decomposition the denomination "Iwasawa-like" because of its simililarity with the non-archimedean Iwasawa decoposition of ...}

\subsubsection{Ratner's Theorem}
We refer for the main reference of these crucial results to the seminal Ratner's paper \cite{rat}. 

Let us begin by recalling some terminology so to restate Ratner's theorems.
\\Let $G$ be a $\ell$-adic Lie group, and let $\Gamma$ be a discrete and cocompact subgroup of its\footnote{More generally, one requires that $\Gamma\backslash G$ admits a finite invariant measure.}. We say that a subset $A\subset \Gamma\backslash G$ is $\mathit{homogeneous}$ if there exists $x\in\Gamma\backslash G$ and a closed subgroup $H\le G$ such that $xHx^{-1}\cap \Gamma$ is a lattice in $xHx^{-1}$ and $A=xH$ (so that $A$ is closed).

We state a less general version of \cite[Thms. 3,6]{rat}, which still suffices for our goals.

\begin{thm}\label{ratner}
	Let $G$, $\Gamma$, $H$, and $U\le H$ as above.
	\begin{enumerate}
		\item For any $x\in \Gamma\backslash G$, then the closure $\overline{xU}$ is homogeneous;
		\item for $f\in\calc(\overline{xU},\C)$, we have
		\begin{equation}
			\lim_{s\to\infty}\dfrac{1}{\vol(D_s)}\int_{D_s}f(xu(t))d\lambda(t)=\int_{\overline{xU}}f(y)d\mu(y)
		\end{equation}
	for the compact open $D_s=\{a\in F_\ell : |a|\le s \}$  and $\mu$ the unique $H$-invariant measure supported on $\overline{xU}$.
	\end{enumerate}
\end{thm}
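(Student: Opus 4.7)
The plan is to follow Ratner's strategy, in which both the topological rigidity of $U$-orbits and their equidistribution are deduced from a classification of $U$-invariant ergodic probability measures on $\Gamma\backslash G$ (measure rigidity). The non-trivial analytic input is the \emph{polynomial divergence} of unipotent flows: if $y,y'=yg$ are two nearby points with $g$ close to the identity, the drift $u(t)gu(-t)$ is a polynomial function of $t$ in well-chosen coordinates, and this shearing is what forces extra invariance of any limit object.

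First I would prove a measure rigidity statement: any $U$-ergodic probability measure $\mu$ on $\Gamma\backslash G$ is the Haar measure of a closed orbit of some closed subgroup $H\supseteq U$. For this, one picks two generic $\mu$-typical points $y$ and $yg$ and analyses $u(t)gu(-t)$ as $t$ varies over $D_s$ with $s$ large. The polynomial shearing, together with a Birkhoff-type argument and entropy considerations (or directly Ratner's R-property in the non-archimedean formulation), produces a one-parameter subgroup outside the centraliser of $U$ under which $\mu$ is invariant. Iterating this construction and using that $G$ is a Lie group of finite dimension, the stabiliser of $\mu$ grows to some closed $H\supseteq U$ and one shows $\mu$ is concentrated on a single closed $H$-orbit.

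Given this, part (1) follows by considering any weak-$*$ accumulation point $\mu$ of the averages $\vol(D_s)^{-1}\int_{D_s}\delta_{xu(t)}\,d\lambda(t)$; the $\ell$-adic analogue of the Dani--Margulis non-divergence theorem on $\Gamma\backslash G$ guarantees that $\mu$ is a probability measure, and $\mu$ is clearly $U$-invariant, so by measure rigidity $\mu$ is supported on a closed $H$-orbit through $x$, whence $\overline{xU}=xH$. Part (2) follows at once: every accumulation point of the averages is an $H$-invariant probability measure on the single closed orbit $\overline{xU}=xH$, but such a measure is unique, so the full averages converge to it for every $f\in\calc(\overline{xU},\C)$.

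The main obstacle is the measure rigidity step. Its proof in the $p$-adic setting is the content of Ratner's paper \cite{rat} and cannot really be compressed: one needs careful ultrametric estimates on polynomial divergence, a delicate choice of transversals, and entropy/Birkhoff input. The non-divergence of $\ell$-adic unipotent flows on $\Gamma\backslash G$ used above is also a non-trivial ingredient, but is standard once one has the quantitative estimates on polynomial excursions in the ultrametric geometry. The passage from measure rigidity plus non-divergence to topological rigidity and equidistribution, by contrast, is the now-standard Mozes--Shah--Dani--Margulis scheme, and goes through verbatim in the $\ell$-adic category.
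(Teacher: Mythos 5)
The paper does not prove this statement at all: it is stated explicitly as a specialisation of Ratner's Theorems 3 and 6 in \cite{rat}, and the ``proof'' consists of that citation. Your proposal instead sketches the internal machinery of Ratner's argument (polynomial divergence, measure rigidity, non-divergence, then the Mozes--Shah/Dani--Margulis passage to topological rigidity and equidistribution), while honestly conceding that the measure-classification core cannot be compressed and is exactly the content of \cite{rat}. So in substance you and the paper end up in the same place --- the theorem is imported, not reproved --- and your outline is a correct description of the standard route; what it buys is a reader-friendly explanation of \emph{why} the result holds, at the cost of not being a self-contained proof.

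Two caveats on the reduction steps you do spell out. First, in the setting of the theorem $\Gamma$ is assumed discrete and cocompact, so $\Gamma\backslash G$ is compact and the averages $\vol(D_s)^{-1}\int_{D_s}\delta_{xu(t)}\,d\lambda(t)$ automatically have probability-measure accumulation points; the $\ell$-adic Dani--Margulis non-divergence theorem you invoke is therefore not needed here (it would be for non-uniform lattices). Second, your deduction of (2) from (1) is too quick: an accumulation point of the averages is a priori only $U$-invariant, not $H$-invariant, and the measure classification alone gives that it is the Haar measure of a closed orbit of \emph{some} closed subgroup containing $U$, possibly a proper subgroup $L$ with $xL\subsetneq xH=\overline{xU}$. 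Ruling out limits concentrated on such smaller homogeneous pieces (equivalently, showing the orbit does not spend a positive proportion of time near them) is precisely the linearization/Mozes--Shah step, and it is this --- not merely the uniqueness of the $H$-invariant measure on $xH$ --- that forces convergence of the full averages. Since this is again part of what Ratner proves in \cite{rat}, the honest conclusion is that both you and the paper are citing the same black box; your write-up should make clear that (2) is being quoted, not derived from (1) by a soft uniqueness argument.
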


We remark that part $(1)$ shows that the closure of any such orbit is a nice geometric subset of $\Gamma\backslash G$, while part $(2)$ can be viewed an equidistribution result; namely, the orbit $xU$ is equidistributed in its closure.

In our context, we apply these powerful ideas to $G=(B_\ell^1)^r=\SL_2(F_\ell)^r$ for a positive natural $r$, and $\Gamma=\prod_{i\le r}\Gamma_i$ for $\Gamma_i$ lattices in $\SL_2(F_\ell)$.

\begin{lem}\label{tec}
	Suppose that $Red(xU)$ is dense in $C^{-1}(\bar{x})$.
	Moreover for $f\in\calc(C^{-1}(x),\C)$ we have
	\[
	\lim_{s\to\infty} \dfrac{1}{\vol(D_s)}\int_{D_s}f\circ Red(xu(t))d\lambda(t)=\int_{C^{-1}(\bar{x})}fd\mu_{\bar{x}}.
	\]
\end{lem}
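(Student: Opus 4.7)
\begin{skprf}
The plan is to transport the problem to a homogeneous space of the form $\Gamma\backslash \SL_2(F_\ell)^r$ and apply Ratner's theorem (Theorem \ref{ratner}) directly.

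First I would invoke the quaternionic uniformization of Lemma \ref{quatunif}, which, after passing to an appropriate compact open level, identifies the fiber $C^{-1}(\bar{x})$ (modulo that level) with a quotient $\Stab(x)\backslash B_\ell^1$. In the simultaneous setting of Theorem \ref{mainthm} this identification has the form $\Gamma\backslash \SL_2(F_\ell)^r$ where $r=\#(\Sigma\times R)$ and $\Gamma$ is a discrete cocompact subgroup (by Lemma \ref{fibstab} together with Strong Approximation). Under this identification, the curve $t\mapsto Red(xu(t))$ corresponds to the right translation by the one-parameter unipotent subgroup $P^t=\{u(t):t\in F_\ell\}\subset \SL_2(F_\ell)^r$ applied to the image of $x$, so the left-hand side of the asserted equality becomes precisely a Ces\`aro average of a continuous function along a unipotent orbit in a homogeneous space with finite volume.

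Next I would apply both parts of Ratner's theorem to this homogeneous setup. Part (1) gives that the closure $\overline{xP^t}$ is a homogeneous set $xH$ for some closed subgroup $H\supseteq P^t$ with $xHx^{-1}\cap \Gamma$ a lattice in $xHx^{-1}$, and part (2) gives that the Ces\`aro averages over $D_s$ converge to the integral of $f$ against the unique $H$-invariant probability measure $\mu_H$ supported on $xH$. The density hypothesis, translated through the identification above, forces the orbit closure $xH$ to project surjectively onto the entire fiber $C^{-1}(\bar{x})$.

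The main (and really only) delicate step is identifying the pushforward of $\mu_H$ along the projection to $C^{-1}(\bar{x})$ with the measure $\mu_{\bar{x}}$ constructed in Section \ref{mof}. For this I would argue that the pushforward is invariant under $H$, and that the projection of $H$ to the group acting on $C^{-1}(\bar{x})$ contains (a dense image of) $(G'(\A_f^v)\times F_v^\times)^1$ by Strong Approximation together with the density hypothesis; uniqueness of the $(G'(\A_f^v)\times F_v^\times)^1$-invariant probability measure on the fiber (Corollary \ref{unif} and the construction in Section \ref{mof}) then forces the pushforward to equal $\mu_{\bar{x}}$. Combining this identification with Ratner's equidistribution statement, and finally passing to the limit over the compact open level in $C^{-1}(\bar{x})$ by a density/approximation argument on continuous $f$, yields the desired formula.
\end{skprf}
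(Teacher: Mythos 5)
Your proposal follows essentially the same route as the paper: reduce (via locally constant functions/finite level) to the quaternionic uniformization $\Gamma_x\backslash(B_\ell^1)^{\Sigma\times R}\simeq C^{-1}(\bar{x})/U$, view $t\mapsto Red(xu(t))$ as a (diagonal) one-parameter unipotent orbit there, and apply Ratner's theorem, with the density hypothesis identifying the orbit closure and hence the limit measure with $\mu_{\bar{x}}$. The only cosmetic differences are that you invoke part (1) of Ratner explicitly and postpone the approximation of continuous $f$ to the end, whereas the paper assumes $f$ locally constant from the outset.
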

\begin{proof}
		
	Assume, without loss of generality, that $f$ is locally constant, so that it factors for an appropriate open compact $U$ through $C^{-1}(x)/U$. 
	
	Let recall that $\Sigma$ denote a finite set of non-archimedean places of $F$ disjoint from $S$ while $R$ denote a finite subset of $G_K^{\text{ab}}$. By Lemma \ref{lemma2.20} we have the following $B_\ell^1$-equivariant homeomorphism
	\[
	\Gamma_x\backslash (B_\ell^1)^{\Sigma\times R}\simeq C^{-1}(\bar{x})/U
	\] 
	where $\Gamma_x$ is the stabilizer of $Red(x)U$ in $(B_\ell^1)^{\Sigma\times R}$. 
	
	Let $\Delta\colon B_\ell^1\hookrightarrow (B_\ell^1)^{\Sigma\times R}$ be the diagonal embedding. Under this homeomorphism the image of $(t\mapsto Red(x.u(t)))$ in $C^{-1}(\bar{x})/U$ corresponds to the image of $(t\mapsto \Delta\circ u(t))$ in $\Gamma_x\backslash (B_\ell^1)^{\Sigma\times R}$. Similarly, there is a correspondence of probability measure on the two spaces.
	
	By the second part  of Ratner's theorem \ref{ratner} it immediately follows that
	\[
	\lim_{s\to\infty}\dfrac{1}{\vol(D_s)}\int_{D_s}f(\Delta\circ u(t))d\lambda(t)=\int_{\Gamma_x\backslash(B_\ell^1)^{\Sigma\times R}}f d\mu
	\] 
	and by the above correspondence we conclude.
\end{proof}

\begin{lem}
	Under the assumptions of Theorem \ref{mainthm}, $Red(\gamma xU)$ is dense in $C^{-1}(\gamma \bar{x})$ for almost all $\gamma\in 
	G_K^\text{ab}$.
\end{lem}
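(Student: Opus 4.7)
The plan is to transfer the density question across the homeomorphism of Lemma \ref{quatunif} (in its simultaneous version over $\Sigma\times R$) and then apply the classification part of Ratner's theorem, Theorem \ref{ratner}(1). Concretely, under the $B_\ell^1$-equivariant identification
\[
\Gamma_{\gamma x}\backslash (B_\ell^1)^{\Sigma\times R}\simeq C^{-1}(\gamma\bar x)/U,
\]
the orbit $Red(\gamma x\cdot u(t))$ corresponds to the image of the diagonal orbit $\Delta\circ u(F_\ell)$, where $\Delta\colon B_\ell^1\hookrightarrow (B_\ell^1)^{\Sigma\times R}=\SL_2(F_\ell)^{|\Sigma|\cdot|R|}$ is the diagonal embedding. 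By the Iwasawa-like decomposition of Lemma \ref{iwas}, it is enough to show that for almost all $\gamma\in\calg$, at least one such translated $\Delta$-orbit is already dense in its ambient quotient.

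First I would apply Ratner's theorem in the form of Theorem \ref{ratner}(1) to conclude that the closure of each individual diagonal unipotent orbit is a homogeneous subset: it equals $\gamma x H_\gamma$ for some closed subgroup $H_\gamma\le (B_\ell^1)^{\Sigma\times R}$ containing $\Delta(F_\ell)$ and meeting a conjugate of $\Gamma_{\gamma x}$ in a lattice. The question then reduces to identifying $H_\gamma$. Using the $\ell$-adic classification of closed subgroups of $\SL_2(F_\ell)^N$ that contain a diagonally embedded one-parameter unipotent (the analogue over $F_\ell$ of the Ratner/Margulis classification), such an $H_\gamma$ is, up to conjugation, a product of $\SL_2(F_\ell)$-factors grouped along a partition of the $\Sigma\times R$ coordinates, where two coordinates can be identified in one block only if the corresponding projections of $\Gamma_{\gamma x}$ agree up to an inner automorphism of $B_\ell^1$.

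Next I would translate such an accidental identification between coordinates $(v_1,\sigma_1)\neq(v_2,\sigma_2)$ into an arithmetic relation at the Galois level. By Cerednik--Drinfeld uniformization and the explicit description of $\Gamma_{\gamma x}$ as a conjugate of $\nr(G'(\Q)_0)$ via Lemma \ref{fibstab}, any such identification forces the existence of an element of $G'(\Q)$ whose $\ell$-component, after multiplication by the difference $\sigma_1\sigma_2^{-1}\in G_K^{\text{ab}}$ pushed through $\rec_K$, lands in $K^\times\cdot F_\ell^\times\le K_\ell^\times$. This is exactly the condition defining $\Rat_K(\ell)$, so the ``bad'' $\gamma$ are constrained to lie in a finite union of cosets indexed by the finitely many partitions of $\Sigma\times R$ and by pairs of distinct elements of $R$, each coset cut out by a nontrivial $\Rat_K(\ell)$-condition coming from the pairwise disjointness hypothesis on $R$.

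The main obstacle will be the last step: controlling the Haar measure of these exceptional cosets. The hypothesis that the elements of $R$ are pairwise disjoint modulo $\Rat_K(\ell)$ is precisely what rules out, for a positive-measure set of $\gamma$, that two distinct pairs produce the identification above; one must then verify that the resulting exceptional locus in $\calg$ is a finite union of translates of closed proper subgroups of positive codimension, hence of Haar measure zero. Once this is checked, $H_\gamma=(B_\ell^1)^{\Sigma\times R}$ for almost every $\gamma$, which is the claimed density of $Red(\gamma xU)$ in $C^{-1}(\gamma\bar x)$.
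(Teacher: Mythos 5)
Your strategy is the same one the paper uses: its proof of this lemma simply defers to \cite[Section 2.5.2]{cv}, and that argument is precisely the route you outline — transfer the problem to $\Gamma_{\gamma x}\backslash (B_\ell^1)^{\Sigma\times R}$ via the homeomorphism of Lemma \ref{quatunif}, apply the orbit-closure part of Theorem \ref{ratner} to the diagonal unipotent orbit, and exclude proper intermediate subgroups by a Goursat-type analysis in which an identification of two coordinates is converted into an arithmetic condition (non-isomorphic quaternion algebras rule out identifications across two distinct places of $\Sigma$, and the hypothesis that $R$ is pairwise disjoint modulo $\Rat_K(\ell)$ rules out identifications between two distinct Galois translates). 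One small omission before the Goursat step: you need each single-factor projection of the orbit closure to be all of $\Gamma_i\backslash \SL_2(F_\ell)$, which holds because the cocompact lattices in play contain no nontrivial unipotents, so no proper unimodular subgroup containing $u(F_\ell)$ can meet them in a lattice.

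The genuine gap is in your last step, which you yourself flag as the main obstacle but then resolve by an invalid principle. In a profinite group such as $\calg$, a proper closed subgroup can be open and hence of positive Haar measure, so ``finite union of translates of closed proper subgroups, hence measure zero'' is not a legitimate inference; one must check infinite index. More seriously, the exceptional locus is not a finite union of subgroup translates: unwinding the identification of $(v,\sigma_1)$ and $(v,\sigma_2)$ produces an existential condition over the countable group $G'(\Q)$ (there must exist a global element $b$ carrying $red_v(\sigma_1\gamma x)$ to $red_v(\sigma_2\gamma x)$ compatibly with the twist at $\ell$), so the bad set of $\gamma$ is a countable union of pieces indexed by $b$, each of which has to be shown null unless it forces $\sigma_1\sigma_2^{-1}\in\Rat_K(\ell)$. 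Relatedly, your claim that the identification condition ``is exactly'' the $\Rat_K(\ell)$-condition, independently of $\gamma$, is in tension with the statement you are proving: if the obstruction were $\gamma$-independent, the hypothesis on $R$ would give density for \emph{all} $\gamma$ and no ``almost all'' would be needed. In fact the condition extracted from a fixed $b$ depends on $\gamma$ (through conjugation of the level and isogeny data by the idele representing $\gamma$), and the content of \cite[Section 2.5.2]{cv} — which the paper invokes verbatim — is exactly the measure-theoretic argument showing that for each $b$ this $\gamma$-condition either cuts out a null set or degenerates into the $\ell$-rationality of $\sigma_1\sigma_2^{-1}$, which the hypothesis on $R$ excludes. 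As written, your proposal does not supply this argument, so the ``almost all $\gamma$'' conclusion is not yet established.
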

\begin{proof}
The proof follows almost verbatim \cite[Section 2.5.2]{cv}.
\end{proof}

\subsubsection{Proof of Theorem \ref{mainthm}}\label{proof}

Armed with the results of the previous section, we can finally prove formula (\ref{equi}). In order to do so, we need some contributory functionals as follows. We define, for $x\in\SCM$ and $z\in\calz$,
\begin{itemize}
	\item $\delta(-,x)\in\calc(\calx^{\text{\card}},\C)^*$ as $\delta(f,x):=\int_{\calg}f\circ Red(gx) dg$;
	\item $I(-,z)\in\calc(\calz,\C)^*$ as $I(f,z):=\int_{c^{-1}(z)}fd\mu_z$;
	\item $B(-,x)\in\calc(\calx^{\text{\card}},\C)^*$ as $\int_\calg I(f,g\bar{x})dg$.
\end{itemize}
where $\bar{x}=C\circ Red (x)$.

Assume, without loss of generality, that the function $f\in\calc(\calx^{\text{\card}},\C)$ is locally constant. Let $U$ a compact subgroup of $G'(\A_f)$ such that $f$ factor through $\calx^{\text{\card}}_U$. This implies that also the functionals $\delta(f,x)$, $B(f,x)$ and $I(f,z)$ factor through $\calg\backslash \scal^{\text{SCM}}_U$ and $\calz_U$ respectively.

By Lemma \ref{tec} we have
\begin{equation}\label{r}
\lim_{s\to\infty}\dfrac{1}{\vol(D_s)}\int_{D_s}f\circ Red(g.x.u(t))d\lambda(t)=\int_{C^{-1}(g.\bar{x})}fd\mu.
\end{equation}
Integrating both sides of (\ref{r}) over $\calg$ and applying Lebesgue dominated convergence  to move the limit outside the integral over $\calg$ and Fubini theorem to switch the order of the integration, we obtain
\begin{equation}\label{s}
\lim_{s\to\infty}\dfrac{1}{\vol(D_s)}\int_{D_s}\delta(x.u(t))d\lambda(t)=B(f,x)
\end{equation}
for every $x$ and $u$. From Lemma \ref{iwas}, we have that, for $i\in I$, the map $t\mapsto \delta(x_i.u_i(t))$ is constant on $D_s$, namely it is $\delta(x_iu_{i,s})$. By (\ref{s}), for all $i\in I$ we have 
\[
\lim_{s\to\infty}\delta(x_iu_{i,s})=B(x_i).
\] 
Pick $\epsilon>0$ and $N$ big enough such that for all $i$ and for $s>N$ we have $|\delta(x_iu_{i,s})-B(x_i)|<\epsilon$.
\\Define $C_\epsilon$ to be the following compact subset of $\scal^{\text{SCM}}$
\[
C_\epsilon=\bigcup_{i\in I}\bigcup_{n\le N}\calg\cdot x_iu_{i,s}(D_s)\cdot U.
\]
Since for $x\in \calh$ there exists an $i\in I$ and a positive $s$ such that $x\in \calg\cdot x_iu_{i,s}\cdot U$, then $\delta(f,x)=\delta(x_iu_{i,s})$ and $B(f,x)=B(x_i)$. For $x$ outside the compact $C_\epsilon$, we obtain 
\[
|\delta(f,x)-B(f,x)|<\epsilon
\]
and so this completes the proof of Theorem \ref{mainthm}.

\subsubsection{Equidistribution at a Single Prime}

Consider the case of a single prime $v$ ramified in $K$.
Geometrically, we recall that superspecial points correspond to the intersection points in the  the special fiber of $\widehat{\cals}$.
Since they form a finite set, the equidistribution expressed by (\ref{equi}) becomes for a fixed $s\in\cals^{\textit{ss}}$
\begin{equation}\label{equiram}
	\lim_{x\to\infty}\dfrac{1}{\# \calg x}\sum _{\substack{g\in\calg \\ red_v(gx)=s}}\mathbf{1}_s(g)= O_\calg^{-1} I(s)
\end{equation}

where $O_\calg$ is the sum of the the cardinalities of all $\calg$-orbits in the connected components, and $I(s)$ is the positive constant as in \cite[Cor.2.11]{cv}.

Indeed the functionals introduced in Section \ref{equisec} thus become
\begin{itemize}
	\item $\delta(x):=\delta(\mathbf{1}_s,x)=\sum_g\mathbf{1}_s(g)$, for  $g\in\calg$ such that $red_v(gx)=s$;
	\item $I(s):=I(\mathbf{1}_s,s)$ the value of $I$ on $C(s)$; %$I(x):=I(\mathbf{1}_s,x)=\sum_{C(y)=z}\mathbf{1}_z(y)$
	\item $B(x):=B(\mathbf{1}_s,x)=\begin{cases}
		& 0,\;\;\text{if}\;\;\bar{x}\notin\calg\cdot C(s)
		\\
		& O_\calg^{-1} I(s),\;\;\text{otherwise}.
	\end{cases}$
\end{itemize}

The case of $v$ inert in $K$ follows from a slight change of notation.

\subsubsection{Equidistribution on a Product of Shimura Curves}
Given our main equidistribution theorem, we show how for a product of Shimura curves the analoguous result holds. For the sake of notational simplicity, we deal only with the product of two Shimura curve.
\\

Let $X$ be a locally compact metric Hausdorff space, and consider $\text{Meas}(X):=\Hom_\C(\calc(X,\C),\C)$, namely, the space of regular Borel measures on $X$ which we endow with  the weak-$*$ topology.

\begin{lem}\label{tensor}
 Let
	\[
	\otimes\colon\text{Meas}(X)\times\text{Meas}(X)\longrightarrow\text{Meas}(X\times X)
	\]
denote the tensor product of measures and let $\Delta\colon\text{Meas}(X)\hookrightarrow\text{Meas}(X)\times\text{Meas}(X)$ denote the diagonal embedding. Then $\otimes\circ\Delta$ is a continuous map. 
\end{lem}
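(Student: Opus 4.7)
The plan is to verify continuity of $\otimes\circ\Delta$ at an arbitrary $\mu\in\text{Meas}(X)$ by unwinding the definition of the weak-$*$ topology on $\text{Meas}(X\times X)$: it suffices to show that for every fixed $F\in\calc(X\times X,\C)$, the scalar-valued map
\[
\Phi_F\colon \text{Meas}(X)\longrightarrow \C, \qquad \mu\longmapsto (\mu\otimes\mu)(F)
\]
is continuous at $\mu$ in the weak-$*$ topology. Since $\Delta$ is trivially continuous, the whole question reduces to analyzing $\Phi_F$.

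First I would treat the case of \emph{pure tensor} test functions $F=g\boxtimes h$, i.e.\ $F(x,y)=g(x)h(y)$ with $g,h\in\calc(X,\C)$. By Fubini, $\Phi_F(\mu)=\mu(g)\cdot\mu(h)$, and this is continuous in $\mu$ as the pointwise product of the two evaluation functionals $\mu\mapsto\mu(g)$ and $\mu\mapsto\mu(h)$, which are continuous by the very definition of the weak-$*$ topology on $\text{Meas}(X)$. By $\C$-linearity of $\Phi_{(-)}$ in the test function slot, the same holds for any $F$ lying in the algebraic tensor product $\calc(X,\C)\otimes\calc(X,\C)\subseteq\calc(X\times X,\C)$.

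Next I would invoke a Stone--Weierstrass argument: since $X$ is locally compact metric Hausdorff, and the subalgebra generated by functions of the form $g\boxtimes h$ separates points of $X\times X$ and contains the constants, the algebraic tensor product is uniformly dense in $\calc(X\times X,\C)$ on each compact subset. Given an arbitrary $F\in\calc(X\times X,\C)$, pick $F_\varepsilon$ in the algebraic tensor product with $\|F-F_\varepsilon\|_\infty<\varepsilon$ on a large enough compact set that captures almost all of the mass of the measures under consideration; then split
\[
|\Phi_F(\nu)-\Phi_F(\mu)| \le |\Phi_F(\nu)-\Phi_{F_\varepsilon}(\nu)| + |\Phi_{F_\varepsilon}(\nu)-\Phi_{F_\varepsilon}(\mu)| + |\Phi_{F_\varepsilon}(\mu)-\Phi_F(\mu)|.
\]
The middle term tends to zero by the previous paragraph, while the outer two are controlled by $\varepsilon \cdot \|\nu\|^2$ and $\varepsilon \cdot \|\mu\|^2$ respectively.

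The main obstacle is therefore to guarantee uniform boundedness of the total masses $\|\nu\|$ in a weak-$*$ neighborhood of $\mu$, and to rule out escape of mass to infinity in the noncompact case. In our setting this is harmless: the measures relevant to Theorem~\ref{mainthm} are probability measures (normalized counting measures on finite sets, or the Ratner-invariant measures $\mu_z$), so $\|\nu\|=1$ throughout; and the underlying spaces on which we apply the lemma are profinite/compact, so the Stone--Weierstrass approximation is uniform globally. This removes the two technical subtleties simultaneously and reduces the statement to the elementary step that the product of two continuous scalar functionals is continuous.
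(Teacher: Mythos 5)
Your argument is correct in substance and, on the step where it matters, more careful than the paper's own proof, but it follows a somewhat different route. The paper works with a weak-$*$ convergent net, checks convergence first against indicator functions $\mathbf{1}_{U\times V}$ of products of (compact open, or Urysohn-approximated) sets via Fubini, and then writes a general $f\in\calc(X\times X,\C)$ as a limit of linear combinations of such indicators, interchanging the two limits by an appeal to Levi/monotone convergence. You instead test against pure tensors $g\boxtimes h$, where continuity of $\mu\mapsto\mu(g)\mu(h)$ is immediate, extend by linearity to the algebraic tensor product, and pass to general $F$ by Stone--Weierstrass plus a three-epsilon estimate. The payoff of your version is that it makes explicit the one genuine subtlety, which the paper's interchange of limits glosses over: the approximation step needs a uniform bound on the total masses $\norm{\nu}$ near $\mu$, and weak-$*$ neighborhoods do not provide one. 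Indeed, for a kernel $F$ not of finite rank the functional $\mu\mapsto(\mu\otimes\mu)(F)$ fails to be weak-$*$ continuous on all of $\text{Meas}(X)$ (one can make $\nu(g_i)$ small for finitely many test functions while $\nu(e_m)$ is huge), so the lemma is really a statement about bounded sets of measures; your restriction to probability measures on the compact (profinite) spaces arising in Theorem \ref{mainthm} is therefore not merely convenient but necessary, and it is exactly the setting in which the lemma is applied. With that restriction stated, your proof is complete; the paper's proof implicitly operates under the same restriction.
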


\begin{proof}

	Suppose that $\{\mu_n\}_n$ is a net in  $\text{Meas}(X)$, converging in the weak-$*$ topology to $\mu$. We recall that the weak-$*$ topology on the space of measures on $X$ is the coarsest topology for which tha maps $\mu\mapsto \mu(f)$ are continuous for $f$ continuous. For such an $f$, this means that we have 
	\[
	\mu(f)=\lim_n\mu_n(f)%,\;\;\nu(f)=\lim_n\nu_n(f%)
	.
	\]
Consider a subspace of the space of measures on $X$ of the form $U\times V$ for $U$ and $V$ measurable. If $X$ is profinite, and thus non-archimedean, then we require that $U$ and $V$ are compact open, so that the two functions $\mathbf{1}_U$ and $\mathbf{1}_V$ are continuous. On the other hand, for $X$ archimedean, $\mathbf{1}_U$ and $\mathbf{1}_V$ can be substituted by continuous functions by the mean of Urysohn Lemma\footnote{In our case, it essentially reduces to the Pasting Lemma.}. Then we have
	\begin{equation}
		\begin{split}
			(\mu\otimes \mu)(\mathbf{1}_{U\times V})&=\mu(\mathbf{1}_{U})\mu(\mathbf{1}_{V})
			\\
			&=\lim_n\mu_n(\mathbf{1}_U)\lim_n\mu_n(\mathbf{1}_V)
			\\
			&=\lim_n(\mu_n\otimes\mu_n)(\mathbf{1}_{U\times V}),
		\end{split}
	\end{equation}
where the first and the third equalities follows by Fubini theorem. Note that the first equality we also use that $\mathbf{1}_{U\times V}=\mathbf{1}_U\cdot\mathbf{1}_V$. Now, any function $f$ on $X\times X$ is obtained as limit of linear combinations of functions of the form $\mathbf{1}_{U\times V}$, i.e., for complex $c_i$'s,
\[
f=\lim_{m\to\infty}\sum_{i=0}^{m}c_i\mathbf{1}_{U_i\times U_i}.
\]
Thus we have
\begin{equation}
	\begin{split}
		(\mu\otimes\mu)(f)&=(\mu\otimes\mu)(\lim_{m\to\infty}\sum_{i=0}^{m}c_i\mathbf{1}_{U_i\times U_i})
		\\
		&=\lim_{m\to\infty}\sum_{i=0}^m(\mu\otimes\mu)(c_i\mathbf{1}_{U_i\times U_i})
		\\
		&=\lim_{m\to\infty}\sum_{i=0}^m(\lim_n (\mu_n\otimes\mu_n)(c_i\mathbf{1}_{U_i\times U_i}))
	\end{split}
\end{equation}
where the second equality follows by Levi theorem. Finally, we can switch the order of the limits of the last equality by the Monotone Convergence theorem, so that we obtain 
\[
(\mu\otimes\mu)(f)=\lim_n(\mu_n\otimes\mu_n)(f)
\]
which implies the desired weak-$*$ convergence.
\end{proof}

Let $\calh\times\calh$ be the product of $\ell$-isogeny classes in the fiber product $\scal^\text{SCM}\times\scal^\text{SCM}$. Consider also $F=(f,f)\in\calc((\calx\times\calx)^{\Sigma\times R},\C)$ where $f$, $\Sigma$ and $R$ are defined as in Theorem \ref{mainthm}. Moreover we also introduce the maps
\[
\underline{Red}\colon\scal^\text{SCM}\times\scal^\text{SCM}\longrightarrow(\calx\times\calx)^{\Sigma\times R}
\]
and
\[
\underline{C}\colon(\calx\times\calx)^{\Sigma\times R}\longrightarrow (\calz\times\calz)^{\Sigma\times R},
\]
where $\underline{Red}:=(Red,Red)$ and $\underline{C}:=(C,C)$ for $Red$ and $C$  defined as in Section \ref{simred}.

\begin{prop}
	Let $\underline{x}=(x,y)$ for $x,y\in\overline{\calh}$ and let $\calg$ be a compact open in $G_K^\text{ab}$.
	Then we have
	\[
	\lim_{\underline{x}\to\infty}\dfrac{1}{\vol(\calg\times\calg)}\int_{\calg\times\calg}F\circ \underline{Red}(\underline{g}\underline{x})d\underline{g}=\int_{\underline{C}^{-1}(\bar{\underline{x}})}Fd\underline{\mu}
	\]
	where $\vol(\calg\times\calg)=\int_{\calg\times\calg}d\underline{g}$ for $d\underline{g}$ the Haar measure induced by $dg$ and $d\underline{\mu}$ is induced by the measure $\mu$ on $C^{-1}(g\bar{x})$.
\end{prop}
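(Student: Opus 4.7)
The plan is to reduce the product statement to two separate applications of Theorem~\ref{mainthm}, one in each factor, and then to assemble the two resulting limits via the continuity of the tensor product of measures established in Lemma~\ref{tensor}.

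Concretely, for $x\in\overline{\calh}$ write $\mu_{x,\calg}$ for the probability measure on $\calx^{\Sigma\times R}$ obtained as the pushforward of $\vol(\calg)^{-1}dg$ under the map $g\mapsto Red(gx)$, and similarly $\mu_{y,\calg}$. By Fubini's theorem the left hand side equals
\[
\int_{(\calx\times\calx)^{\Sigma\times R}} F\, d(\mu_{x,\calg}\otimes\mu_{y,\calg}).
\]
Theorem~\ref{mainthm} tells us that, as $x\to\infty$ in $\overline{\calh}$, the measure $\mu_{x,\calg}$ converges in the weak-$*$ topology of $\mathrm{Meas}(\calx^{\Sigma\times R})$ to the limit measure appearing on the right hand side of equation~(\ref{equi}); denote this limit by $\mu_{\bar x}^\infty$. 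The same conclusion holds for $\mu_{y,\calg}\to\mu_{\bar y}^\infty$ as $y\to\infty$. This already disposes of the case of an elementary tensor $F=f_1\boxtimes f_2$, in which the integral factors as a product of two integrals over $\calg$.

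To combine these two single-factor limits for general $F$, I would apply Lemma~\ref{tensor}. The proof of that lemma rewrites $(\mu_n\otimes\mu_n)(\mathbf{1}_{U\times V})=\mu_n(\mathbf{1}_U)\mu_n(\mathbf{1}_V)$ and approximates an arbitrary continuous function on the product by finite linear combinations of such characteristic products; this argument carries over verbatim to the bilinear map $(\mu,\nu)\mapsto\mu\otimes\nu$ with possibly different factors. Applying it yields the weak-$*$ convergence
\[
\mu_{x,\calg}\otimes\mu_{y,\calg}\longrightarrow \mu_{\bar x}^\infty\otimes\mu_{\bar y}^\infty.
\]
Since $\underline{C}^{-1}(\bar{\underline x})=C^{-1}(\bar x)\times C^{-1}(\bar y)$ and, by the construction in Section~\ref{mof}, $\underline{\mu}$ is precisely the tensor product of the single-factor fibre measures, pairing with $F$ yields the desired identity.

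The main obstacle, in my view, is matching the definition of $\underline x\to\infty$ on the product with the two individual limits in each coordinate: a compact subset of $\calh\times\calh$ need not be a product of compacts. This is handled using the uniform form of the conclusion of Theorem~\ref{mainthm} actually established at the end of Section~\ref{proof}, where for every $\epsilon>0$ one produces an explicit compact $C_\epsilon\subset\scal_U^{\mathrm{SCM}}$ outside which $|\delta(f,x)-B(f,x)|<\epsilon$. Taking the product $C_\epsilon\times C_\epsilon$, which by Tychonoff contains every compact of $\calh\times\calh$ up to enlargement, gives an explicit compact controlling the joint difference, after which the product limit reduces to the two coordinate limits and completes the proof.
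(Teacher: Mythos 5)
Your proposal is correct and takes essentially the same route as the paper: weak-$*$ convergence of the averaged reduction measures $\delta(-,x)=\int_{\calg}(-)\circ Red(gx)\,dg$ supplied by Theorem~\ref{mainthm}, combined with the continuity of the tensor product of measures from Lemma~\ref{tensor}. You are in fact slightly more careful than the paper, which applies the diagonal form $\mu\mapsto\mu\otimes\mu$ of Lemma~\ref{tensor} even though $x\neq y$ is allowed; your observation that the proof extends verbatim to the bilinear map $(\mu,\nu)\mapsto\mu\otimes\nu$ covers exactly this point.
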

\begin{proof}
	By Theorem \ref{mainthm} we have the convergence  for the measures $\delta(-,x)=\int_\calg (-)\circ Red(gx)dg$. Applying Lemma \ref{tensor} with $\mu_n=\delta(-,x)$ implies the desired result.
\end{proof}

\section{Appendix: the definite case}

Let $R$ be an Eichler order in $B$, a definite quaternion algebra, and denote by $\widehat{R}$ its adelization. Two left ideals\footnote{I.e., an additive subgroup stable under multiplication and of $\calo_F$-rank $4$.} $I,J$ of $R$ belong to the same class if $Ib=J$ for some $b\in B^\times$. The set of all such classes is denoted by $\Cl(B)$. It admits an interpretation as an adelic double quotient (see \cite[p.87-88]{vign}) given by the following bijection
\[
\Cl(B)\simeq G(\Q)\backslash G(\A_f)/\widehat{R}^\times.
\]
By strong approximation, we have that $\Cl(B)$ is a finite set.

\subsubsection{Gross Curve}
To $B$ we associate a curve in the following way (see \cite[p.418]{bd} for more details). 
\\Denote by $\mathscr{P}$ the conic curve over $\Q$ defined by
\[
\mathscr{P}(A)=\{x\in B\otimes A:\;x\neq0,\;\nr(x)=tr(x)=0\}/A^\times
\]
for a $\Q$-algebra $A$. Note that the $K$-points of $\mathscr{P}$ are naturally in bijection with $\Hom(K,B)$. The $\Q$-algebraic group $\Aut(\mathscr{P})=G(\Q)$ acts by conjugation on $\mathscr{P}$, and we obtain the $\mathit{Gross}$ curve of level $R$
\[
X_R=G(\Q)\backslash\mathscr{P}\times G(\A_f)/\widehat{R}^\times.
\]
For $(g_i)_{i\le r}$ a set of representatives of $\Cl(B)$, we set $\Gamma_i=g_i \widehat{R} g_i^{-1}\cap B^\times$, which are finite subgroups of $B^\times$. We can therefore write the Gross curve
\[
X_R=\bigsqcup_{i=1}^r \Gamma_i\backslash \mathscr{P}
\]
as a disjoint union of conics over $\Q$.

\subsubsection{Gross Points}\label{grosspoints}
In the very influential paper \cite{gr}, Gross gave a geometric interpretation to the arithmetic of $\mathit{definite}$ quaternion algebras as follows.
\\

By the Skolem-Noether theorem we have the following bijection
\[
\Emb(K,B)\simeq K^\times\backslash B^\times.
\]
We define $\tau\colon K\hookrightarrow B$ to be an $\mathit{optimal}$ embedding if it maps the $\calo_F$-order $\calo$ in $K$ to the $\calo_F$-order $R$ of $B$ and does not extend to an embedding of any larger order into $R$. In symbols, $\tau(K)\cap R=\tau(\calo)$. Denote the set of these optimal embeddings by $\Opt(\calo,R)$
Note that being optimal is a local property (see \cite[Lemma 30.3.6]{voi}).

For $c\ge 0$, we recall that $\calo_c:=\calo_F + c\calo_K$. Let $\tau\colon\calo_c\hookrightarrow R$ be an optimal embedding. A $\mathit{Gross}$ $\mathit{point}$ of conductor $c$ is the equivalence class of the pair $(R,\tau)$ by $B^\times$-conjugation. In other words, Gross points on $X_R$ over $K$ are the image of $\mathscr{P}(K)\times G(\A_f)/ \widehat{R}^\times$ in $X_R(K)$. Denote by $\Gr(c)$ the set of Gross points of conductor $c$. There is moreover a simply transitively action of $\widehat{\calo}_n\backslash T(\A_f)/ T(\Q)$ on the set of Gross points of conductor $c$ (see \cite[Lemma 2.5]{bd}). 
\\

Consider the $\Q$-algebraic groups $G=\text{Res}_{F/\Q}(B^\times)$ and $T=\text{Res}_{F/\Q}(K^\times)$. Gross points corresponds to (discrete) set of CM points of level $R$
\[
\text{CM}_R:=T(\Q)\backslash G(\A_f)/\widehat{R}^\times
\]
considered in \cite[p.7]{cv} in the definite case. Moreover, as explained in \cite[p.419]{bd}, one has that $\mathscr{P}$ coincides with $\Hom(K_v, B_v)$ at the level of $K_v$-points.

\begin{rmk}
	We remark that Gross points play the role of CM points in the Shimura curve when the quaternion algebra is definite, which in this case form simply an infinite discrete set.
	Despite this, there is a caveat: although sometimes called Heegner points (as in \cite{vat}), Gross points are not the CM points in the classical sense for an indefinite quaternion algebra. %In fact, Gross points do not give rise to any family of points on the Jacobian of the Shimura curve.
\end{rmk}

On the other hand, if $\tau$ is not optimal, then there exists a $\calo'$ containing $\calo$ such that it is optimal for $\calo'$. Thus the set $\Emb(\calo,R)$  can be decomposed as the disjoint union of sets $\Opt(\calo',R)$ of optimal embeddings, that is,
\[
\Emb(\calo,R)=\coprod_{\calo\subseteq\calo'} \Opt(\calo',R)
\]
Moreover, for  
$E_{\calo,R}:=\{b\in B^\times : b^{-1}Kb\cap R=b^{-1}\calo b \}=\{b\in B^\times : K\cap b^{-1}Rb=\calo\}$, we have the following bijection
\[
\Opt(\calo, R)\leftrightarrow K^\times \backslash  E_{\calo, R}
\]
given by $b\mapsto \tau_{b}$ such that $\tau_b(a)=b^{-1}ab$. Therefore
\[
\Emb(\calo_K,\calo_B)\leftrightarrow K^\times\backslash \{b\in B^\times: \calo_K\subseteq K\cap b^{-1}\calo_Bb\}.
\]
The previous set in terms on conductors $c$ can be rewritten as
\[
	\Emb(\calo_K,\calo_B)\leftrightarrow K^\times\backslash\coprod_c  \{b\in B^\times: \calo_c =K\cap b^{-1}\calo_Bb\}.
\]

\subsubsection{Auxiliary Spaces and Equidistribution}

Let $B_S$ be the quaternion algebra introduced in Section \ref{rm}, and consider the  $\Q$-algebraic groups $G_S=\text{Res}_{F/\Q}(B_S^\times)$ and $Z=\text{Res}_{F/\Q}(F^\times)$. 

We define the auxiliary profinite group
\[
G(S):=\prod'_{v\notin S}B_{S,v}^\times\times\prod_{v\in S}F^\times_v
\]
where the restricted product is considered with respect to the closure in $B_{S,v}$ of some fixed $\calo_{F}$-order in $B_S$. This is related to $G_S$ by the continuous map $\pi_S\colon G_S(\A_f)\twoheadrightarrow G(S)$, which is induced by the norm $\nr_{S,v}\colon B_{S,v}^\times\rightarrow F_v^\times$.

We thus obtain the finite sets of $\mathit{special}$ points (of level $R$) 
\[
\calx(S)_R=G(S,\Q)\backslash G(S)/\widehat{R}^\times
\]
and of $\mathit{connected}$ $\mathit{components}$
\[
\calz_R=Z(\Q)^+\backslash Z(\A_f)/\widehat{R}^\times,
\]
where $G(S,\Q)=\pi_S(G_S(\Q))$.

Lastly, the $\mathit{reduction}$ map $red_S\colon\scal^{\text{CM}}_R\rightarrow \calx(S)_R$ is induced by the map $\phi_S\colon G(\A_f)\rightarrow G(S)$ as defined in \cite[p.6]{cv} and the $\mathit{connected}$ $\mathit{component}$ map $c_S\colon \calx(S)_R\rightarrow\calz_R$ is induced by $\nr_{S,v}$.

\begin{thm}
	Let $v$ ramify in $K$. Denote $\Gr(\infty)=\bigcup_{c\in\N}\Gr(c)$. Let also $\calg$ be as in Theorem \ref{mainthm} and suppose that $s\in c_v^{-1}(\calg x)$. Then
	\[
	\lim_{x\to\infty} \dfrac{\#\{x\in\Gr(\infty)\;|\; red_v(x)=s \}}{\#\calg x}=O_\calg^{-1}I(s) 
	\]
	where $O_\calg^{-1}I(s)$ is defined in (\ref{equiram}).
\end{thm}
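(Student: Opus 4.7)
The plan is to mirror closely the argument for Theorem \ref{mainthm} given in Section \ref{proof}, exploiting the fact that in the definite setting the ambient spaces $\calx(S)_R$ and $\calz_R$ are finite, so the equidistribution takes the sharper counting form displayed in \eqref{equiram}. First I would unpack the right-hand side: by the definition of $O_\calg$ and $I(s)$ in the ramified case, $O_\calg^{-1}I(s)$ is exactly the value at $s$ of the measure $B(\mathbf{1}_s,x)$ provided $\bar{x}$ lies in $\calg\cdot C(s)$, which is guaranteed by the hypothesis $s\in c_v^{-1}(\calg x)$. Thus the statement to prove is the convergence of $\delta(\mathbf{1}_s,x)/\#(\calg x)$ to $B(\mathbf{1}_s,x)$ as $x$ goes to infinity along $\Gr(\infty)$, which is a direct translation to the definite setting of the limit formula \eqref{s}.

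Next I would reinterpret Gross points adelically via the bijection $\Gr(\infty)\leftrightarrow \bigsqcup_c T(\Q)\backslash G(\A_f)/\widehat R^\times$ already recalled in Section \ref{grosspoints}, and describe the fibers of $c_v$ on $\calx(S)_R$ as orbits under the kernel of $\pi_S$ composed with the reduced-norm map. The analogues of Lemma \ref{fibstab} and Corollary \ref{unif} should then give, for each $z\in\calz_R$, a unique probability measure $\mu_z$ on $c_v^{-1}(z)$ which here is just the normalized counting measure weighted by the orders of the stabilizers, recovering the weights $w(s_i)$ entering $I(s)$. The quaternionic uniformization argument in Lemma \ref{quatunif}, based on Strong Approximation at an auxiliary split place $\ell$, carries over verbatim because the hypothesis that $B$ splits at $\ell$ makes $B_\ell^1\simeq \SL_2(F_\ell)$ and supplies the transitive action of the norm-one group on the fibers.

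Then the Iwasawa-like decomposition of Lemma \ref{iwas} applies in essentially the same form: any $\ell$-isogeny class $\calh\subset\Gr(\infty)$ can be written as a finite union of $\calg$-translates of one-parameter unipotent orbits $\{x_iu_i(\varpi_\ell^{-n})\}_{n\ge 0}$. Feeding these orbits into Ratner's theorem on $\Gamma\backslash \SL_2(F_\ell)^{\Sigma\times R}$ as in Lemma \ref{tec}, and using Lebesgue dominated convergence plus Fubini to commute the $\calg$-integral with the Ratner limit (the $\calg$-integral here being a finite average because $\calg$ acts through a finite quotient on $\calx(S)_R$), one obtains the convergence of $\delta(x_iu_{i,s})$ to $B(x_i)$ for each $i\in I$. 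Packaging this into a uniform statement off a suitable compact set $C_\epsilon$ exactly as at the end of Section \ref{proof} yields the desired limit.

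The main obstacle, in my view, is not the algebraic or dynamical core of the proof but rather verifying that the definitions of $\delta$, $I$, and $B$ given in the indefinite case are the correct ones in the definite setting so that the identification with $O_\calg^{-1}I(s)$ really holds; in particular one must check that the measure $\mu_z$ built from the fiber description above coincides with the weighted counting measure on the singular locus, and that the hypothesis $s\in c_v^{-1}(\calg x)$ lands one in the nontrivial branch of the piecewise definition of $B$. Once these compatibilities are recorded, the rest is a transcription of Sections \ref{mof}--\ref{proof} with $G'$ replaced by $G_S$ and with the observation that, $\calx(S)_R$ being finite, the statement \eqref{equi} specializes to the counting formula \eqref{equiram}.
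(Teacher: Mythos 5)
Your proposal takes essentially the same route as the paper, which in fact gives no further detail here: it simply asserts that the result follows \emph{mutatis mutandis} from the proof of Theorem \ref{mainthm} in the indefinite case. Your outline — reinterpreting Gross points adelically, transferring the fiber measures, quaternionic uniformization, the Iwasawa-like decomposition, and Ratner's theorem to the definite setting with $G_S$ in place of $G'$, and checking that $\delta$, $I$, $B$ specialize to the counting form (\ref{equiram}) — is precisely the transcription the paper has in mind.
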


As mentioned in the introduction, a (simultaneous) equidistribution result follows mutatis mutandis from the same proof of the indefinite case.


\begin{thebibliography}{100}
	
	

	
	
	
	\bibitem{bd}
	Bertolini, Massimo; Darmon Henri. \textit{ Heegner points on Mumford-Tate curves.} Invent. Math. 126 (1996), no. 3, 413-456.
	

	\bibitem{blr}
	 Bosch, Sigfried; Lutkebohmert, Werner; Raynaud, Michel. \textit{N\'eron Models.} Ergebnisse der Math. Springer
	Heidelberg, 21, 1990.
	
	\bibitem{bour}
	Bourbaki, Nicolas. \textit{Integration. II. Chapters 7-9.} Translated from the 1963 and 1969 French originals by Sterling K. Berberian. Elements of Mathematics. Springer-Verlag, Berlin, 2004. viii+326 pp.
	
	\bibitem{bc} 
	Boutot, Jean-Fran\c{c}ois; Carayol, Henri. \textit{Uniformisation $p$-adique des courbes de Shimura: les th\'eor\'emes de Cerednik et de Drinfeld.} 
	(French) Courbes modulaires et courbes de Shimura (Orsay, 1987-1988). Ast\'erisque No. 196-197 (1991), 7, 45-158 (1992).
	
	\bibitem{bz}
	Boutot J.-F., Zink Thomas. \textit{The $p$-adic Uniformization of Shimura Curves.} Arxiv preprint.
	
    \bibitem{cu}
     Clozel Laurent, Ullmo Emmanuel. \textit{\'Equidistribution des points de Hecke} Contributions to automorphic forms,
    geometry, and number theory, Johns Hopkins Univ. Press, Baltimore, MD, 2004, pp. 193-254 
	
	%\bibitem{con}
	%Conrad, Brian. \textit{Semistable reduction for abelian varieties.} Notes available at \textit{\url{https://virtualmath1.stanford.edu/~conrad/mordellsem/Notes/L13.pdf}}
	
	\bibitem{con2}
	Conrad, B. \textit{N\'eron models, Tamagawa factors and Tate-Shafarevich groups.} Notes available at \textit{\url{https://virtualmath1.stanford.edu/~conrad/BSDseminar/Notes/L3.pdf}}
	 
    \bibitem{cor}
	Cornut, Christophe. \textit{Mazur's conjecture on higher Heegner points.} Invent. Math. 148 (2002), no. 3, 495-523. 
	
	\bibitem{cv}
	Cornut, C.; Vatsal, V. \textit{CM points and quaternion algebras.} Doc. Math. 10 (2005), 263-309.
	
	\bibitem{cv2}
	Cornut, C.; Vatsal, V. \textit{Nontriviality of Rankin-Selberg $L$-functions and CM points.} $L$-functions and Galois representations, 121-186, London Math. Soc. Lecture Note Ser., 320, Cambridge Univ. Press, Cambridge, 2007.
	 
	 
	 \bibitem{del}
	 Deligne, Pierre. \textit{Travaux de Shimura.} S\'eminaire Bourbaki, Exp. No. 389, pp. 123-165. Lecture Notes in Math., Vol. 244, Springer, Berlin, 1971.
	
	
	\bibitem{dis}
	Disegni, Daniel.
	\textit{$p$-adic equidistribution of CM points}.  Comment. Math. Helv. 97 (2022), no. 4, 635-668. 
	
	
	\bibitem{drin}
	Drinfeld, Vladimir G.
	\textit{Coverings of p-adic symmetric domains.} (Russian) Funkcional. Anal. i Prilozhen. 10 (1976), no. 2, 29-40. 
	
	
	\bibitem{gr}
	Gross, Benedict. \textit{Heights and the special values of $L$-series.} Number Theory (Montreal, Que., 1985). In: CMS Conference Proceedings, vol. 7, pp. 115-187. American Mathematical Society, Providence, 1987.
	
    \bibitem{hmrl} 
    Herrero Sebasti\'an,  Menares Ricardo, Rivera-Letelier, Juan. \textit{$p$-adic distribution of CM points and Hecke orbits I: Convergence towards the Gauss point.} Algebra Number Theory 14 (2020), no. 5, 1239-1290.
    
    \bibitem{hmrl2} 
    Herrero S.,  Menares R., Rivera-Letelier, J.
    \textit{$p$-adic distribution of CM points and Hecke orbits. II: Linnik equidistribution on the supersingular locus.} Arxiv preprint.
	
	\bibitem{jk}
	Jetchev, Dimitar; Kane, Ben. \textit{Equidistribution of Heegner points and ternary quadratic forms.} Math. Ann. 350 (2011), no. 3, 501-532.
	
	
	\bibitem{mol}
	Molina, Santiago. \textit{Ribet bimodules and the specialization of Heegner points.} Isr. J. Math. 189, 1-38 (2012).
	
	
	
    \bibitem{rat}
    Ratner, Marina. \textit{Raghunathan's conjectures for Cartesian products of real and p-adic Lie groups.} Duke Math. J. 77 (1995), no. 2, 275-382.
    
    \bibitem{rib}
    Ribet, Kenneth A. \textit{Bimodules and abelian surfaces.} In Algebraic number theory, volume 17 of Adv.
    Stud. Pure Math., 359-407. Academic Press, Boston, MA, 1989.
    
    \bibitem{ros}
    Rosengarten, Zev. \textit{Non-extendability of Abel-Jacobi maps.} Arxiv Preprint.
    
    \bibitem{fms}
    Saettone, Francesco Maria. \textit{Equidistribution of CM points on Shimura curves and ternary quadratic forms.} Work in progress.
    
    \bibitem{vat}
    Vatsal, Vinayak. \textit{Uniform distribution of Heegner points.} Invent. Math. 148 (2002), no. 1, 1-46.
    
    \bibitem{vign}
    Vign\'eras, Marie-France. \textit{Arithmétique des algèbres de quaternions.} Lecture Notes in Mathematics, 800. Springer, Berlin, 1980. 
    
    \bibitem{voi}
	Voight, John. \textit{ Quaternion algebras.} Graduate Texts in Mathematics, 288. Springer, 2021.
	 
	 
	\bibitem{yzz}
	Yuan, Xinyi; Zhang, Shou-Wu; Zhang, Wei. \textit{The Gross-Zagier formula on Shimura curves.} Annals of Mathematics Studies, 184. Princeton University Press, Princeton, NJ, 2013. x+256 pp. 
	
	\bibitem{sz3}
	Zhang, Shouwu.
	\textit{Equidistribution of CM-points on quaternion Shimura varieties}. IMRN
	2005, no. 59, 3657-3689.

	
	\bibitem{sz}
	Zhang, Shouwu.
	\textit{Gross-Zagier formula for $\GL_2$}.
	Asian J. Math. 5 (2001). no. 2, 183-290.
	
	
	\bibitem{sz2}
	Zhang, Shouwu.
	\textit{ Heights of Heegner points on Shimura curves}. Ann. of Math. (2) 153 (2001), no. 1, 27-147.
	
	\bibitem{xz}
	Zhang, Xiaoyu.
	\textit{Simultaneous supersingular reductions of Hecke orbits.} Arxiv preprint.
	
	
	
	
	
\end{thebibliography}
\end{document}